\newtheorem{thm}{Theorem}[section]
\newtheorem{corollary}[thm]{Corollary}
\newtheorem{lemma}[thm]{Lemma}
\newtheorem{proposition}[thm]{Proposition}
\newtheorem{claim}[thm]{Claim}
\newtheorem{theorem}[thm]{Theorem}
\theoremstyle{definition}
\newtheorem{definition}[thm]{Definition}
\newtheorem{example}[thm]{Example}
\newtheorem{remark}[thm]{Remark}
\numberwithin{equation}{section}
\newenvironment{acknowledgements}{%
  \begin{abstract}
}{%
  \end{abstract}
}
\newcommand{\Cf}{{\tt CnFct}(B_3)}
\newcommand{\CF}{{\tt CnFct}(B_4)}
\newcommand{\CFn}{{\tt CnFct}(B_n)}
\newcommand{\A}{{\mathcal A}}
\newcommand{\X}{{\mathcal X}}
\newcommand{\Y}{{\mathcal Y}}
\newcommand{\Z}{{\mathcal Z}}
\newcommand{\B}{{\mathcal B}}
\newcommand{\C}{{\mathcal C}}
\newcommand{\LCF}{{\tt LCF}}
\newcommand{\BG}{{\tt BGen}}
\newcommand{\red}{{\tt red}}
\newcommand{\Red}{{\tt Red}}
\newcommand{\SSS}{{\tt SSS}}
\newcommand{\Wds}{{\tt Wds}}
\newcommand{\SL}{{\tt SL}}
\newcommand{\D}{{\mathcal D}}
\newcommand{\nb}{{\tt nb}}
\newcommand{\ch}{{\tt cvh}}
\newcommand{\Int}{{\text{int}}}
\providecommand\st{}
\newcommand\RelSymbol[1][]{%
\nonscript\:#1\vert
\allowbreak
\nonscript\:
\mathopen{}}
\DeclarePairedDelimiterX\GenRels[1]\langle\rangle{%
\renewcommand\st{\RelSymbol[\delimsize]}
#1}
\newcommand{\tik}[1]{%
  \begin{tikzpicture}[baseline=-\dimexpr\fontdimen22\textfont2\relax]
  #1
  \end{tikzpicture}%
}
\newcommand{\aone}{%
  \tik{
    \coordinate (1) at (0,-.15);
\coordinate (2) at (.25,-.15);
\coordinate (3) at (.25,.1);
\coordinate (4) at (0,.1);
\foreach \i/\Position in {1/below, 2/below, 3/above, 4/above}{
    \fill (\i) circle (1pt) node [\Position] {};}
    \draw (1)--(2)--cycle;}%
}
\newcommand{\aonehigh}{%
  \tik{
    \coordinate (1) at (0,-.15);
\coordinate (2) at (.25,-.15);
\coordinate (3) at (.25,.1);
\coordinate (4) at (0,.1);
\foreach \i/\Position in {1/below, 2/below, 3/above, 4/above}{
    \fill (\i) circle (1pt) node [\Position] {};}
    \draw (1)--(2)--cycle;%
    \draw[yellow, ultra thick] (1)--(3)--(4)--cycle;}%
}
\newcommand{\atwo}{%
  \tik{
    \coordinate (1) at (0,-.15);
\coordinate (2) at (.25,-.15);
\coordinate (3) at (.25,.1);
\coordinate (4) at (0,.1);
\foreach \i/\Position in {1/below, 2/below, 3/above, 4/above}{
    \fill (\i) circle (1pt) node [\Position] {};}
    \draw (2)--(3)--cycle;}%
}
\newcommand{\athree}{%
  \tik{
    \coordinate (1) at (0,-.15);
\coordinate (2) at (.25,-.15);
\coordinate (3) at (.25,.1);
\coordinate (4) at (0,.1);
\foreach \i/\Position in {1/below, 2/below, 3/above, 4/above}{
    \fill (\i) circle (1pt) node [\Position] {};}
    \draw (3)--(4)--cycle;}%
}
\newcommand{\afour}{%
  \tik{
    \coordinate (1) at (0,-.15);
\coordinate (2) at (.25,-.15);
\coordinate (3) at (.25,.1);
\coordinate (4) at (0,.1);
\foreach \i/\Position in {1/below, 2/below, 3/above, 4/above}{
    \fill (\i) circle (1pt) node [\Position] {};}
    \draw (4)--(1)--cycle;}%
}
\newcommand{\afourhigh}{%
  \tik{
    \coordinate (1) at (0,-.15);
\coordinate (2) at (.25,-.15);
\coordinate (3) at (.25,.1);
\coordinate (4) at (0,.1);
\foreach \i/\Position in {1/below, 2/below, 3/above, 4/above}{
    \fill (\i) circle (1pt) node [\Position] {};}
    \draw (4)--(1)--cycle;
    \draw[yellow, ultra thick](4)--(3)--(2)--cycle;}%
}
\newcommand{\e}{%
  \tik{
    \coordinate (1) at (0,-.15);
\coordinate (2) at (.25,-.15);
\coordinate (3) at (.25,.1);
\coordinate (4) at (0,.1);
\foreach \i/\Position in {1/below, 2/below, 3/above, 4/above}{
    \fill (\i) circle (1pt) node [\Position] {};}}%
}
\newcommand{\bone}{%
  \tik{
    \coordinate (1) at (0,-.15);
\coordinate (2) at (.25,-.15);
\coordinate (3) at (.25,.1);
\coordinate (4) at (0,.1);
\foreach \i/\Position in {1/below, 2/below, 3/above, 4/above}{
    \fill (\i) circle (1pt) node [\Position] {};}
    \draw (1)--(3)--cycle;}%
}
\newcommand{\bonehigh}{%
  \tik{
    \coordinate (1) at (0,-.15);
\coordinate (2) at (.25,-.15);
\coordinate (3) at (.25,.1);
\coordinate (4) at (0,.1);
\foreach \i/\Position in {1/below, 2/below, 3/above, 4/above}{
    \fill (\i) circle (1pt) node [\Position] {};}
    \draw (1)--(3)--cycle;
    \draw[yellow, ultra thick] (1)--(4);%
    \draw[yellow, ultra thick] (2)--(3);}%
}
\newcommand{\btwo}{%
  \tik{
    \coordinate (1) at (0,-.15);
\coordinate (2) at (.25,-.15);
\coordinate (3) at (.25,.1);
\coordinate (4) at (0,.1);
\foreach \i/\Position in {1/below, 2/below, 3/above, 4/above}{
    \fill (\i) circle (1pt) node [\Position] {};}
    \draw (2)--(4)--cycle;}%
}
\newcommand{\atwoaone}{%
  \tik{
    \coordinate (1) at (0,-.15);
\coordinate (2) at (.25,-.15);
\coordinate (3) at (.25,.1);
\coordinate (4) at (0,.1);
\foreach \i/\Position in {1/below, 2/below, 3/above, 4/above}{
    \fill (\i) circle (1pt) node [\Position] {};}
    \draw (1)--(2)--(3)--(1)--cycle;}%
}
\newcommand{\atwoaonehigh}{%
  \tik{
    \coordinate (1) at (0,-.15);
\coordinate (2) at (.25,-.15);
\coordinate (3) at (.25,.1);
\coordinate (4) at (0,.1);
\foreach \i/\Position in {1/below, 2/below, 3/above, 4/above}{
    \fill (\i) circle (1pt) node [\Position] {};}
    \draw (1)--(2)--(3)--(1)--cycle;
    \draw[yellow, ultra thick](1)--(4);}%
}
\newcommand{\athreeatwo}{%
  \tik{
    \coordinate (1) at (0,-.15);
\coordinate (2) at (.25,-.15);
\coordinate (3) at (.25,.1);
\coordinate (4) at (0,.1);
\foreach \i/\Position in {1/below, 2/below, 3/above, 4/above}{
    \fill (\i) circle (1pt) node [\Position] {};}
    \draw (2)--(3)--(4)--(2)--cycle;}%
}
\newcommand{\afourathree}{%
  \tik{
    \coordinate (1) at (0,-.15);
\coordinate (2) at (.25,-.15);
\coordinate (3) at (.25,.1);
\coordinate (4) at (0,.1);
\foreach \i/\Position in {1/below, 2/below, 3/above, 4/above}{
    \fill (\i) circle (1pt) node [\Position] {};}
    \draw (1)--(3)--(4)--(1)--cycle;}%
}
\newcommand{\afourathreehigh}{%
  \tik{
    \coordinate (1) at (0,-.15);
\coordinate (2) at (.25,-.15);
\coordinate (3) at (.25,.1);
\coordinate (4) at (0,.1);
\foreach \i/\Position in {1/below, 2/below, 3/above, 4/above}{
    \fill (\i) circle (1pt) node [\Position] {};}
    \draw (1)--(3)--(4)--(1)--cycle;
    \draw[yellow, ultra thick](2)--(3);}%
}
\newcommand{\aoneafour}{%
  \tik{
    \coordinate (1) at (0,-.15);
\coordinate (2) at (.25,-.15);
\coordinate (3) at (.25,.1);
\coordinate (4) at (0,.1);
\foreach \i/\Position in {1/below, 2/below, 3/above, 4/above}{
    \fill (\i) circle (1pt) node [\Position] {};}
    \draw (1)--(2)--(4)--(1)--cycle;}%
}
\newcommand{\aoneafourhigh}{%
  \tik{
    \coordinate (1) at (0,-.15);
\coordinate (2) at (.25,-.15);
\coordinate (3) at (.25,.1);
\coordinate (4) at (0,.1);
\foreach \i/\Position in {1/below, 2/below, 3/above, 4/above}{
    \fill (\i) circle (1pt) node [\Position] {};}
    \draw (1)--(2)--(4)--(1)--cycle;
    \draw[yellow, ultra thick](4)--(3);}%
}
\newcommand{\aoneathree}{%
  \tik{
    \coordinate (1) at (0,-.15);
\coordinate (2) at (.25,-.15);
\coordinate (3) at (.25,.1);
\coordinate (4) at (0,.1);
\foreach \i/\Position in {1/below, 2/below, 3/above, 4/above}{
    \fill (\i) circle (1pt) node [\Position] {};}
    \draw (1)--(2)--cycle;
    \draw (3)--(4)--cycle;}%
}
\newcommand{\aoneathreehigh}{%
  \tik{
    \coordinate (1) at (0,-.15);
\coordinate (2) at (.25,-.15);
\coordinate (3) at (.25,.1);
\coordinate (4) at (0,.1);
\foreach \i/\Position in {1/below, 2/below, 3/above, 4/above}{
    \fill (\i) circle (1pt) node [\Position] {};}
    \draw (1)--(2)--cycle;
    \draw (3)--(4)--cycle;
    \draw[yellow, ultra thick](1)--(3);}%
}
\newcommand{\atwoafour}{%
  \tik{
    \coordinate (1) at (0,-.15);
\coordinate (2) at (.25,-.15);
\coordinate (3) at (.25,.1);
\coordinate (4) at (0,.1);
\foreach \i/\Position in {1/below, 2/below, 3/above, 4/above}{
    \fill (\i) circle (1pt) node [\Position] {};}
    \draw (2)--(3)--cycle;
    \draw (1)--(4)--cycle;}%
}
\newcommand{\delt}{%
  \tik{
    \coordinate (1) at (0,-.15);
\coordinate (2) at (.25,-.15);
\coordinate (3) at (.25,.1);
\coordinate (4) at (0,.1);
\foreach \i/\Position in {1/below, 2/below, 3/above, 4/above}{
    \fill (\i) circle (1pt) node [\Position] {};}
    \draw (1)--(2)--(3)--(4)--cycle;}%
}
\begin{document}
\title{Birman-Ko-Lee left canonical form and its applications}
\author{Michele Capovilla-Searle}
\author{Keiko Kawamuro}
\author{Rebecca Sorsen}
\address {Department of Mathematics, University of Iowa, Iowa City, IA 52242}
\email{michele-capovilla-searle@uiowa.edu}
\email{keiko-kawamuro@uiowa.edu}
\email{rebecca-sorsen@uiowa.edu}
\date{}
\maketitle
\newcolumntype{P}[1]{>{\centering\arraybackslash}p{#1}}

\begin{abstract}
Using Birman, Ko, and Lee's left canonical form of a braid, we 
characterize almost strongly quasipositive braids and 
give estimates of the fractional Dehn twist coefficient.  
\end{abstract}
\tableofcontents

\section{Introduction}
The braid groups $B_n$ for $n=1,2,3,\dots$ were originally introduced by Emil Artin in \cite{Artin}. 
The following presentation is known as the standard presentation of $B_n$ with the Artin generators $\sigma_1,\dots,\sigma_{n-1}$. 
\[B_n= \GenRels*{ \sigma_1,\dots,\sigma_{n-1} \st
\begin{medsize}
\begin{aligned} & \sigma_i\sigma_j=\sigma_j\sigma_i,\enspace |i-j|>1 \\[-0.5ex] %
 & \sigma_i\sigma_{i+1}\sigma_i=\sigma_{i+1}\sigma_i\sigma_{i+1}, \enspace i=1,\dots,n-2%
\end{aligned}
\end{medsize}
} \]

In the standard presentation of $B_n,$ Artin generators only swap adjacent strands.
In the band generator presentation, any two strands are allowed to swap. For each $i,j$ with $1\leq i<j\leq n$, define
\begin{equation*}
    a_{ij}=(\sigma_{j-1}\sigma_{j-2}\cdots\sigma_{i+1})\sigma_i(\sigma_{j-1}\cdots\sigma_{j-2}\sigma_{i+1})^{-1}.
\end{equation*}
Here, $a_{ij}$ is the element in $B_n$ which swaps the $i$th and $j$th strand, while leaving all other strands fixed. In addition, the two strands being interchanged must lie in front of all fixed strands. This gives us the band generator presentation (or Birman-Ko-Lee presentation) \cite{Birman} as follows:
\begin{equation*}
B_n= \GenRels*{ a_{ij} \st
\begin{medsize}
\begin{aligned} & a_{jk}a_{ij}=a_{ij}a_{ik}=a_{ik}a_{jk},\enspace i<j<k \\[-0.5ex] %
 & a_{ij}a_{kl}=a_{kl}a_{ij}, \enspace i<j<k<l%
\end{aligned}
\end{medsize}
} 
\end{equation*}

Word problems and conjugacy problems have been central problems in the study of braids. 
Given two $n$-braid words $w$ and $w'$ determining whether $w=w'$ the same braid element in $B_n$ is called the word problem, and determining whether $w = v w' v^{-1}$ for some $v \in B_n$ i.e., $w$ and $w'$ are conjugate, is called the conjugacy problem. 

These problems have been solved by a number of people, including Artin \cite{Artin}, Garside \cite{Garside}, Elrifai and Morton \cite{ElrifaiMorton}, Xu \cite{Xu}, Kang, Ko, and Lee \cite{KangKoLee}, and Birman, Ko and Lee \cite{Birman}. The latter three papers used band generators to solve these problems.

Birman-Ko-Lee's left canonical form, $\LCF(\beta)$, was used to solve the word problem using band generator techniques.
Namely, Birman, Ko and Lee, generalizing the earlier work by Xu \cite{Xu} and Kang, Ko and Lee \cite{KangKoLee}, proved that for $n$-braid words $w$ and $w'$, we have $w=w'$ in $B_n$ if and only if $\LCF(w)=\LCF(w')$. 
They also solved the conjugacy problem using the left canonical form. See Theorem~\ref{thm:LCF} for the  definition of $\LCF(\beta)$.

Birman, Ko, and Lee's definition of $\LCF(\beta)$ is algebraic. In this paper, we study $\LCF(\beta)$ using non-crossing partition diagrams. These  diagrams can be found in the literature, including the book \cite{OrderingBraids} by Dehornoy, Dynnikov, Rolfsen and Wiest, as well as a paper by Calvez and Wiest \cite{CalvezWiest}.

We will give an outline of the paper and state some of our main results.

In \S\ref{Sec2}, we start with reviewing band generators and non-crossing partition diagrams.  We then define canonical factors that play key role in $\LCF(\beta)$ and relate canonical factors with non-crossing partition diagrams.

In \S\ref{Sec:3}, we introduce a partial ordering $\prec$ on the set of canonical factors of $B_n$, denoted $\CFn$. The definition of $\prec$ is algebraic. We give a graphical interpretation.

\noindent{\bf Theorem \ref{thm:subset}.} {\em 
For canonical factors $A$ and $B$ we have $A\prec B$ if and only if their convex hulls satisfy $\ch(A) \subset \ch(B)$.
}

To this end, we introduce a binary operation $\diamond$ on the set $\CFn$. As a biproduct, we obtain a useful corollary:

\noindent{\bf Corollary~\ref{cor:complementary}.} {\em
For every canonical factor $A\in \CFn$ there exists a $B\in\CFn$ such that $A\diamond B=AB=\delta.$ }

In \S\ref{Sec:OrderedSet}, using the partial ordering $\prec$ in \S~\ref{Sec:3},  we introduce another ordering $\Rightarrow$ on the set $\Wds(\beta)$ of braid words representing the braid $\beta$. 
We give detailed examples how the ordering works in the braid group $B_4$. 

In \S\ref{sec:LCF-def}, we study the left canonical form. 
The set $\Wds(\beta)$ is an infinite set; however, Birman, Ko, and Lee's theorem \cite{Birman} states that with respect to the ordering $\Rightarrow$, there is a unique maximal element in $\Wds(\beta)$. This unique maximal element is called the {\em left canonical form} of $\beta$ and is denoted by $\LCF(\beta)$. 
We also review Kang, Ko and Lee's algorithm \cite{KangKoLee} for the left canonical form and compute examples using non-crossing partition diagrams.

In \S\ref{Sec:6}, we begin to discuss the notion of positivity of knots and links. In this case, positivity refers to the property that all crossings of a link have the same sign. In the literature, multiple notions of positivity for braids have been studied, including positive (P) braids that is the monoid generated by Artin generators $\{\sigma_i\}_{i=1}^{n-1}$, quasipositive (QP) braids that is the monoid normally generated by the Artin generators, and strongly quasipositive (SQP) braids that is the monoid $B_n^+$ generated by the positive band generators $a_{i, j}$. 

It is also interesting to study the notion of almost positivity. For example, if we allow one crossing to be negative, does this change any of the properties of positivity? 
We call such braids almost strongly quasipositive (ASQP). 
In \cite{Keiko}, Hamer, Ito, and Kawamuro discussed properties and relations among the various notions of positivity and almost positivity. 

These positivity notions are related by inclusions: 
$$
\begin{array}{ccccccc}
&& {\rm QP} & \subset & {\rm AQP} & \subset & B_n \\
&& \cup & & \cup & & \\
{\rm P} &\subset & {\rm SQP}=B_n^+ & \subset & {\rm ASQP} & & 
\end{array}
$$

The {\em infimum of a braid} $\beta$, denoted by 
$\inf(\beta)$, is an invariant of the braid and can be directly read from $\LCF(\beta)$ (cf. Definition~\ref{def:inf-sup} and Theorem~\ref{thm:LCF}). 
We discuss the invariant $\inf(\beta)$ for SQP and ASQP braids. 
For SQP braids, we obtain the following if and only if statement. 

\noindent{\bf Theorem~\ref{Thm:SQ}.} {\em 
An $n$-braid $\beta\in B_n$ is strongly quasipositive if and only if $\inf(\beta)\geq 0$. }

For ASQP braids, we obtain an if and only if statement in Theorem~\ref{thm:strictlyasqp} discussed below.

In \S\ref{Sec:7}, we study the {\em negative band number} $\nb(\beta)$.  
It is the minimal number of negative bands for $\beta\in B_n$, i.e.,
$$
\nb(\beta)=\min\left\{ k \, \middle\vert
    \begin{array}{l}
    \text{ $\beta$ is represented by a word in band}\\
    \text{generators containing $k$ negative bands }
    \end{array}
\right\}
$$  
The invariant $\nb(\beta)$ is an upper bound of the defect of the Bennequin inequality \cite{KeikoIto}.
$$ \frac{1}{2}(- \chi(K) - \SL(K)) \leq  \nb[\beta]$$
where $K$ is the knot (or link) type of the braid closure $\hat\beta$, $\chi(K)$ is the maximal Euler characteristic of $K$, and $\SL(K)$ is the maximal self-linking number for $K$.
We give two estimates of the negative band number $\nb(\beta)$ in terms of the left canonical form. The first one is an upper bound of $-\nb(\beta)$. The second one, a lower bound is given in \S\ref{Sec:8}.

\noindent{\bf Theorem~\ref{thm:inequality}.} {\em 
Let $\beta\in B_n$ with $\nb(\beta)\geq 1$. Then $$
0< -\inf(\beta)= |\inf(\beta)| \leq \nb(\beta). 
$$}

In \S~\ref{Sec:8}, we review the reduction operation introduced by Kang, Ko, and Lee \cite{KangKoLee} to solve the shortest word problem for 4-braids. We use the reduction operation to give another estimate of $-\nb(\beta)$ that is complemental to Theorem~\ref{thm:inequality} as follows: 

\noindent{\bf Theorem~\ref{thm:3braid-nb}.} {\em 
Let $\beta$ be an n-braid.  
If $\inf(\beta)<0$ then 
$$\nb(\beta)\leq (n-2) |\inf(\beta)| - \min\{0, \sup(\beta)\}.$$ 
Moreover, the equality holds when $n=3$ and we have
$$\nb(\beta)= |\inf(\beta)| - \min\{0, \sup(\beta)\}.$$} 

In \S\ref{Sec:9}, we discuss more relations between $\inf(\beta)$ and $\nb(\beta)$ for a specific super summit element $\beta\in\SSS(\beta)$.  
We characterize strictly ASQP braids in terms of the left canonical form.

\noindent{\bf Theorem \ref{thm:strictlyasqp}.} {\em 
A braid $\beta\in B_n$ with $n\leq 4$ 
is conjugate to a strictly almost strongly quasipositive braid if and only if every super summit element $\beta'\in \SSS(\beta)$ has $\inf(\beta')=-1$ and $\LCF(\beta')$ contains a canonical factor of word length $n-2$. }

In the above theorem, the restriction on the braid index $n=3,4$ is only required for the only-if ($\Rightarrow$) direction. The statement of the if-direction ($\Leftarrow$) holds for general $n$.

In Section \S\ref{sec:fdtc}, we apply the left canonical form of a braid $\beta$ to study its fractional Dehn twist coefficient (denoted FDTC). Following the description of the fractional Dehn twist coefficient in \cite{HondaKazezMatic} and \cite{ItoKawamuroEssential},
 we first develop tools to easily compute the FDTC of a braid $\beta$. Then, applying the left canonical form, we obtain a bound on the FDTC of $\beta$.

\noindent{\bf Theorem~\ref{FDTC_bound}.} {\em
Suppose  $\LCF(\beta) = \delta^r A_1 \cdots A_k$ then 
$$
\frac{\inf(\beta)}{n} = \frac{r}{n} \leq c(\beta)\leq \frac{r+k}{n} = \frac{\sup(\beta)}{n}.
$$
}

\begin{acknowledgements}
    The authors would like to thank Juan Gonzalez-Meneses for useful conversations during a workshop held at ICERM Brown University in April 2022.
\end{acknowledgements}

\section{Canonical Factors via Diagram}\label{Sec2}

In this section we review band generators of the braid group $B_n$ and 
relate canonical factors of $B_n$ to non-crossing partition diagrams of an $n$-punctured disk $D_n$.

\subsection{Band Generators}
 In the traditional presentation of $B_n,$ Artin generators only swap adjacent strands.
In the band generator presentation, any two strands are now allowed to swap. For each $i,j$ with $1\leq i<j\leq n$, define
\begin{equation*}
    a_{ij}=(\sigma_{j-1}\sigma_{j-2}\cdots\sigma_{i+1})\sigma_i(\sigma_{j-1}\cdots\sigma_{j-2}\sigma_{i+1})^{-1}.
\end{equation*}
Here, $a_{ij}$ is the element in $B_n$ which swaps the $i$th and $j$th strand, while leaving all other strands fixed. In addition, the two strands being interchanged must lie in front of all fixed strands. This gives us the band generator presentation (or Birman-Ko-Lee presentation) \cite{Birman} as follows:
\begin{equation}\label{eq:gp presentation of Bn}
B_n= \GenRels*{ a_{ij} \st
\begin{medsize}
\begin{aligned} & a_{jk}a_{ij}=a_{ij}a_{ik}=a_{ik}a_{jk},\enspace i<j<k \\[-0.5ex] %
 & a_{ij}a_{kl}=a_{kl}a_{ij}, \enspace i<j<k<l%
\end{aligned}
\end{medsize}
} 
\end{equation}
The set of band generators of $B_n$ is denoted by $\BG(B_n)$. 
Band generators $a_{i,j}$ are often called {\em positive bands} as each creates a positive crossing in the braid diagram, versus their inverses $a_{i, j}^{-1}$ are called {\em negative bands}.

In $B_4$, following the notations in \cite{KangKoLee}, the band generators are renamed as:
$$
a_1:=a_{1,2}, \,
a_2:=a_{2,3}, \,
a_3:=a_{3,4}, \,
a_4:=a_{1,4}, \,
b_1:=a_{1,3}, \,
b_2:=a_{2,4}.
$$
Visually, these six band generators are as follows. The braid strands are numbered from 1 (bottom strand) to 4 (top strand). 
\begin{figure}[h]
\centering
\includegraphics[width=11cm]{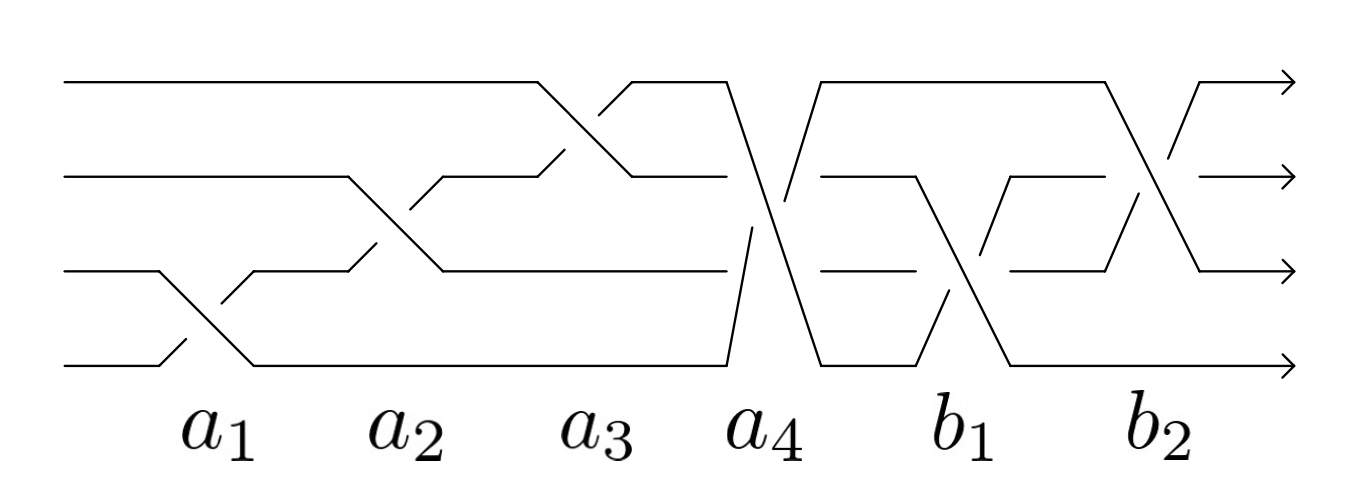}
\caption{Band generators for $B_4$}
\end{figure}
The band generators $a_1, \dots, a_4, b_1, b_2$ are positive bands and  their inverses $a_1^{-1}, \dots, a_4^{-1}, b_1^{-1}, b_2^{-1}$ are negative bands.

\subsection{Non-Crossing Partition Diagrams}
\label{sec:non-crossing partition diagram}
In this subsection, we review non-crossing partition diagrams. 
They have been introduced in the book \cite{OrderingBraids} by Dehornoy, Dynnikov,  Rolfsen, and Wiest, and Calvez and Wiest \cite{CalvezWiest} use diagrams to study 4-braids. 

Recall that $B_4$ can be viewed as the mapping class group of the four times punctured disk $D_4$. We will parameterize $D_4$ as the unit disk in $\mathbb{C}$ with punctures at
\begin{align*}
    P_1&=\frac{1}{2}e^{-\frac{3\pi i}{4}},\quad P_2=\frac{1}{2}e^{-\frac{\pi i}{4}},\\
     P_3&=\frac{1}{2}e^{-\frac{7\pi i}{4}},\quad P_4=\frac{1}{2}e^{-\frac{5\pi i}{4}}.
\end{align*}
\begin{align*}
\begin{tikzpicture}
\filldraw[fill=none, very thick](-1,0) circle (1.5);
\coordinate (1) at (-0.5,0.4);
\coordinate (2) at (-0.5,-0.6);
\coordinate (3) at (-1.5,0.4);
\coordinate (4) at (-1.5,-0.6);
\foreach \i/\Position in {1/below, 2/below, 3/above, 4/above}{
    \fill (\i) circle (1pt) node [\Position] {};}
    \node at (1) [above = 1mm of 1] {$P_3$};
    \node at (2) [above = 1mm of 2] {$P_2$};
    \node at (3) [above = 1mm of 3] {$P_4$};
    \node at (4) [above = 1mm of 4] {$P_1$};
\end{tikzpicture}
\end{align*}
Note that we placed the punctures in a counterclockwise direction, while in \cite{OrderingBraids} and \cite{CalvezWiest} punctures are placed clockwise.

Similarly, we define an $n$ punctured disk $D_n$. Punctures are labeled $P_1,\dots,P_n$ counterclockwise and $P_1$ and $P_n$ are separated by the half-line from the origin with angle $\pi$.

The braid strands can be thought of as $\{P_1, \dots P_n\} \times [0,1]$ in the cylinder $D_4 \times [0, 1]$ where the $k$-th strand is $\{P_k\} \times [0,1]$. 
The braid diagram can be thought as a view of the cylinder $D_n \times [0, 1] \subset \mathbb C \times \mathbb R$ from the position $(-1, 0.5)\in \mathbb C \times \mathbb R$.

The line segment $\overline{P_iP_j} \times \{0.5\}$ in the cylinder can be seen in the braid diagram as an arc over the braid strands $i+1, i+2, \dots, j-1$.  
This gives an idea to pictorially denote the positive band $a_{i,j}$ by the line segment $\overline{P_iP_j}$ in $D_n$. 
In fact under the identification of the braid group $B_n$ and the mapping class group of $n$-punctured disk $D_n$, the braid element $a_{i, j}$ corresponds to a positive half Dehn-twist (counterclockwise) along the line segment $\overline{P_iP_j}$. 

With this idea in mind, we associate a certain braid word, which will be called a {\em canonical factor} in Definition~\ref{def:CF} below, with a diagram. These diagrams have been used in the study of non-crossing partitions.

\begin{definition}
The diagram of $n$ dots (with no edges) represents the identity element $e$ of the braid group $B_n$. When $n=4$ we define:
\begin{equation*}
    (\e):=e.
\end{equation*}
\end{definition}

\begin{definition}\label{def:edge}
{\bf (Edge, 2-gon)}
 The diagram of $n$ dots with an edge connecting two points $P_i$ and $P_j$ ($i<j$) represents the band generator $a_{i,j}\in B_n$. When $n=4$ we define:
     \begin{align*}
        (\aone)&:=a_1&(\atwo):=a_2\\
        (\athree)&:=a_3&(\afour):=a_4\\
        (\bone)&:=b_1&(\btwo):=b_2.
    \end{align*}
\end{definition}

With the new notation we have:
\begin{align*}
    \BG(B_4) &=\{a_1,a_2,a_3,a_4,b_1,b_2\}\\
    &=\{(\aone),(\atwo),(\athree),(\afour),(\bone),(\btwo)\}.
\end{align*}

\begin{remark}\label{rem:2-gon} 
In the following, we will define $3$-gons, $4$-gons, $\dots$ so that an $n$-gon corresponds to a length $n-1$ word. This justifies viewing a single edge connecting two vertices (Definition~\ref{def:edge}) as a $2$-gon.
\end{remark}

\begin{definition}
{\bf (Triangle, 3-gon)}
Consider a triangle with vertices $P_i,P_j,P_k\in D_n$. Denote the band generator corresponding to each edge clockwise by $\alpha_0,\alpha_1,\alpha_2\in\BG(B_n)$. 
For example, in the triangle contained in the disk $D_4$, the three edges are 
$\alpha_0=a_1=$(\aone), 
$\alpha_1=b_1=$(\bone), and 
$\alpha_2=a_2=$(\atwo).
\begin{figure}[h]
\includegraphics[width=3cm]{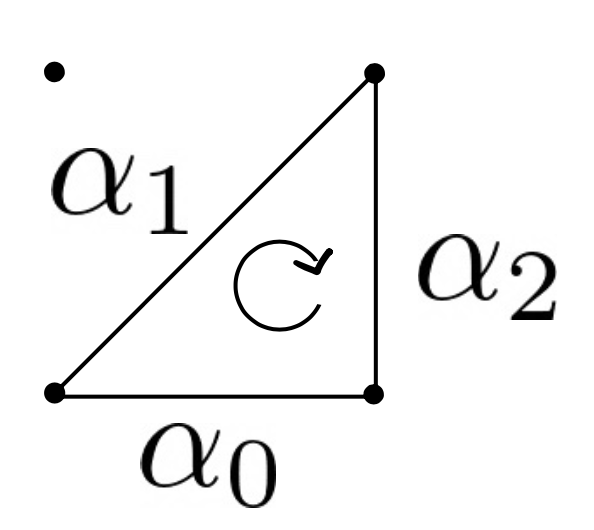}
\centering
\end{figure}

\noindent By the second relation for the band generator presentation, Eq.(\ref{eq:gp presentation of Bn}), for $B_n$, we know that $\alpha_0\alpha_1=\alpha_1\alpha_2=\alpha_2\alpha_0$ holds. Therefore, we let the triangle represent the length 2 braid $\alpha_0\alpha_1=\alpha_1\alpha_2=\alpha_2\alpha_0$. 

For $B_4$, we have four different triangles where each admits three band generator factorizations:
 \begin{align*}
         (\atwoaone)&=a_2a_1=(\atwo)(\aone)=(\aone)(\bone)=(\bone)(\atwo)\\
        (\athreeatwo)&=a_3a_2=(\athree)(\atwo)=(\atwo)(\btwo)=(\btwo)(\athree)\\
        (\afourathree)&=a_4a_3=(\afour)(\athree)=(\athree)(\bone)=(\bone)(\afour)\\
        (\aoneafour)&=a_1a_4=(\aone)(\afour)=(\afour)(\btwo)=(\btwo)(\aone).
    \end{align*}
\end{definition}

\begin{definition}
{\bf (Square, 4 gon)}
Consider a square with vertices $P_i,P_j,P_k,P_\ell\in D_n$. Denote the band generator corresponding to each edge clockwise by $\alpha_0,\alpha_1,\alpha_2,\alpha_3\in\BG(B_n)$. By the band generator presentation for $B_n$, we know $\alpha_0\alpha_1\alpha_2=\alpha_1\alpha_2\alpha_3=\alpha_2\alpha_3\alpha_0=\alpha_3\alpha_0\alpha_1$. We may define the square to represent the length 3 braid element. 
    
    In $B_4$, there is a unique square:
\begin{figure}[h]
\includegraphics[width=3cm]{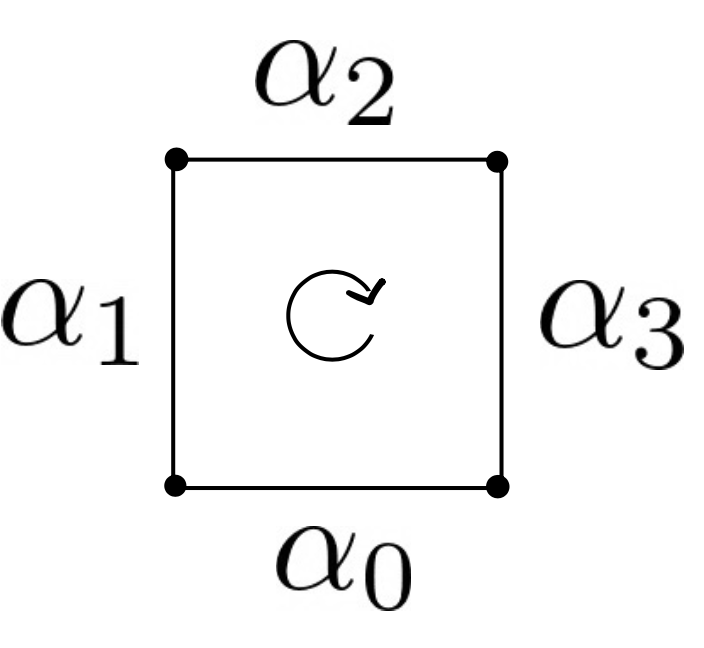}
\centering
\end{figure}

\noindent This square coincides with the   {\em fundamental element} $\delta:=a_3a_2a_1$ of $B_4$. 
It is known \cite{KangKoLee} that $\delta$ admits 12 different factorizations in $\BG(B_4)$ as follows:
\begin{align}\label{delta-expression}
a_3a_2a_1\enspace\enspace a_4a_3a_2\enspace\enspace a_1a_4a_3\enspace\enspace a_2a_1a_4\enspace\enspace b_1a_2a_4\enspace\enspace a_1b_1a_4\\
a_3b_1a_2\enspace\enspace a_1a_3b_1\enspace\enspace b_2a_1a_3\enspace\enspace a_2b_2a_1\enspace\enspace a_4b_2a_3\enspace\enspace a_2a_4b_2 \nonumber
\end{align}
Note that $\delta^4=\Delta^2$ a full twist, thus $\delta^4$ generates the center of the braid group $B_4$.
\end{definition}

We can generalize the above construction to a $k$-gon. 
\begin{definition}
{\bf ($k$-gon)}
Let $2 \leq k\leq n$. A $k$-gon in $D_n$, whose vertices oriented clockwise, represents a length $k-1$ word reading consecutive $k-1$ edges of the $k$-gon clockwise. 
\end{definition}

\begin{definition}
A disjoint union of polygons represents the product of braids corresponding to the polygons. 

For $B_4$, we have two such diagrams that represent $a_1a_3$ and $a_2a_4$. 
    \begin{align*}
        (\aoneathree)&:=a_1a_3\\
        (\atwoafour)&:=a_2a_4.
    \end{align*}
By the disjointedness condition, each of these diagrams admits two factorizations in $\BG(B_4)$ as follows:
    \begin{align*}
        (\aoneathree)&=(\aone)(\athree)=(\athree)(\aone)\\
        (\atwoafour)&=(\atwo)(\afour)=(\afour)(\atwo).
    \end{align*}
\end{definition}

\subsection{Canonical Factors}
In this section, we define canonical factors that play an essential role in the left canonical form of a braid.

An $n$-braid is {\em positive} if it is represented by a word in positive band generators. 
Let $B_n^+$ denote the monoid of positive $n$-braids.

\begin{definition}\label{def:CF}
For two braid words $V,W\in B_n$, we say $V\leq W$ if $W=PVQ$ for some (possibly empty) positive words $P,Q \in B_n^+$. Elements in the set $\{W\in B_n\mid e\leq W\leq \delta\}$ are called {\em canonical factors} and the set is denoted by $\CFn$. 
\end{definition}

In \cite[Corollary 3.5]{Birman}, Birman, Ko, and Lee showed that the cardinality of the set $\CFn$ is the $n$th Catalan number $\mathcal{C}_n=(2n)!/n!(n+1)!$. In their proof, the following theorem is implicit.

\begin{theorem}\label{thm:1-1correspondence} 
The set $\CFn$ is in a one-to-one correspondence with the set of noncrossing partitions of $n$ elements. 
\end{theorem}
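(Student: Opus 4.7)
The plan is to construct the bijection $\Phi$ explicitly, using the diagrammatic conventions set up in Section~\ref{sec:non-crossing partition diagram}, and then deduce bijectivity from the Catalan count $|\CFn| = C_n$ established by \cite{Birman}.

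First, I would define the map $\Phi$ from non-crossing partitions of $\{1, \dots, n\}$ into $\CFn$ as follows. Given a non-crossing partition $\pi = \{B_1, \dots, B_m\}$, for each block $B_j = \{i_1 < i_2 < \dots < i_{k_j}\}$ of size $k_j \geq 2$ draw the convex polygon in $D_n$ with vertex set $\{P_{i_1}, \dots, P_{i_{k_j}}\}$; singleton blocks contribute isolated vertices. The non-crossing property of $\pi$ is exactly the condition that these polygons are pairwise disjoint, so their union is a legitimate diagram of the type considered in \S\ref{sec:non-crossing partition diagram}. Let $\Phi(\pi)$ be the element of $B_n$ represented by this diagram via the $k$-gon convention and disjoint-union product already established. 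The fact that $\Phi(\pi)$ is a well-defined element of $B_n$ (independent of which $k_j-1$ consecutive edges of each polygon one reads off) follows from the triangle and square identities plus commutation of disjoint bands.

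Second, I would verify that $\Phi(\pi) \in \CFn$, that is, $\Phi(\pi) \leq \delta$. For each polygon $P$ with vertex set $B_j$, the missing edges among $P_1, \dots, P_n$ can be added so as to complete $P$ into the full convex $n$-gon whose clockwise edge-reading is $\delta$. This exhibits a positive-word factorization of $\delta$ in which the braid of $P$ occurs as a left divisor, so the braid of $P$ lies in $\CFn$. Iterating over the disjoint blocks of $\pi$ and using commutation of disjoint bands yields $\Phi(\pi) \leq \delta$. Third, I would prove $\Phi$ is injective by tracking the image in the symmetric group $S_n$. Under the natural projection $B_n \to S_n$, a $k$-gon with vertex set $B_j$ maps to a $k$-cycle supported on $B_j$, so the permutation induced by $\Phi(\pi)$ has cycle supports giving back exactly the partition $\pi$. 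Hence distinct non-crossing partitions produce distinct permutations and therefore distinct braid elements. Combined with $|\CFn|=C_n$ from \cite{Birman} and the classical fact that the number of non-crossing partitions of $\{1,\dots,n\}$ is $C_n$, this injection is automatically surjective, establishing the bijection.

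The main obstacle I anticipate is the explicit verification in the second step that $\Phi(\pi)\leq \delta$. One must exhibit a concrete positive word whose product with $\Phi(\pi)$ is $\delta$, and for a general non-crossing partition this is cleanest via induction, either on the number of blocks $m$ or on the size of an outermost block of $\pi$: peel off a block lying on the boundary of the non-crossing structure, reduce to a smaller $D_{n'}$, and stitch the resulting factorizations back into a positive extension to $\delta$. The $k$-gon and disjointness relations in $\BG(B_n)$ are exactly what make this inductive step work, but writing it out carefully is where the real bookkeeping lies.
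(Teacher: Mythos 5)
Your proposal is correct in outline, but it takes a genuinely different route from the paper: the paper offers no proof of Theorem~\ref{thm:1-1correspondence} at all, instead noting that the correspondence is implicit in Birman--Ko--Lee's proof that the cardinality of $\CFn$ is the $n$th Catalan number \cite[Corollary 3.5]{Birman}, where canonical factors are analyzed as products of descending cycles attached to non-crossing partitions. You instead construct the map explicitly from partitions to braids via the polygon diagrams of \S\ref{sec:non-crossing partition diagram}, prove injectivity cleanly through the projection $B_n \to S_n$ (the cycle supports recover the partition), and obtain surjectivity by comparing cardinalities. Two caveats. First, the step you rightly flag as the real work --- showing that the product of the block polygons over \emph{all} blocks divides $\delta$, not merely that each block does --- is only sketched; the peel-off-an-outermost-block induction you describe does go through (an outermost block's descending cycle left-divides $\delta$ with a complementary factor supported on the remaining punctures, so the induction closes), but until it is written out your map is not yet known to land in $\CFn$. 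Second, your surjectivity argument imports the Catalan count from \cite{Birman}, and since their proof of that count is essentially the same combinatorial analysis that yields the correspondence, this is logically legitimate as a citation but places the main content inside the black box; a self-contained argument would instead show directly that every divisor of $\delta$ arises from a non-crossing partition. What your route buys is an explicit, diagrammatic bijection compatible with the paper's conventions, which is more than the paper itself records.
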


For example, there are 14 canonical factors in $B_4$.  
\begin{align*}
\CF&=\{
e, a_1, a_2, a_3, a_4, b_1, b_2, a_2a_1, a_3a_2, a_4a_3, a_1a_4, a_1a_3, a_2a_4, \delta\}\\
&=\{(\e),(\aone),(\atwo),(\athree),(\afour),(\bone),(\btwo),\\&\enspace\enspace\enspace\enspace(\atwoaone),(\athreeatwo),(\afourathree),(\aoneafour),
    (\aoneathree),(\atwoafour),(\delt)\}.
\end{align*}

\section{Partial Ordering on $\CFn$}\label{Sec:3}
In this section we will discuss a natural partial ordering on the set $\CFn$ arising from the diagrams.

\begin{definition}
    The {\em set of convex hulls} of a canonical factor $A$, denoted by $\ch(A)$, is the union of the diagram $A$ and the region(s) enclosed by the edges.
\end{definition}

\begin{example}
The set of convex hulls of a triangle $\ch(\atwoaone)$ consists of the 2-dimensional triangle and the area inside the triangle enclosed by the three edges. The set of convex hulls $\ch(\aoneathree)$ only includes the two disconnected edges. We do not include the area between the two edges, as they do not enclose a connected region. 
\end{example}

Similar to the partial ordering $\leq$ on $B_n$ (Definition~\ref{def:CF}), we define a partial ordering $\prec$ on the set $\CFn$. 

\begin{definition}
For canonical factors $A, B \in \CFn$, we say $A\prec B$ if $AQ=B$ for some canonical factor $Q \in \CFn$. 
\end{definition}

We note that $A\prec B$ implies $A \leq B$.

The next lemma gives a diagrammatic interpretation of the partial ordering.

\begin{theorem}\label{thm:subset}
    Let $A,B\in\CFn$ be canonical factors. We have $\ch(A) \subset \ch(B)$ if and only if $A\prec B$.
\end{theorem}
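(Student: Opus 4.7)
The plan is to show both conditions are equivalent to the statement that the non-crossing partition $\pi_A$ associated to $A$ (via Theorem~\ref{thm:1-1correspondence}) refines the partition $\pi_B$ associated to $B$. The geometric observation is essentially immediate: by the non-crossing property of the polygon diagrams, $\ch(A)\subset \ch(B)$ holds if and only if every polygon of the diagram of $A$ has its vertex set contained in the vertex set of some polygon of $B$, which is precisely the refinement condition on $\pi_A$ and $\pi_B$.

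For the $(\Leftarrow)$ direction, I would suppose $A\prec B$, so $AQ=B$ for some $Q\in\CFn$. Each canonical factor induces a permutation of $\{1,\dots,n\}$ whose cycles are precisely the vertex sets of the polygons of its diagram, so in $S_n$ we have $\pi_B=\pi_A\pi_Q$. Moreover, because $A$, $Q$, and $B$ are canonical factors with $B=AQ$, the band-generator word lengths are additive: $|A|+|Q|=|B|$. Combining these two facts — the permutation identity and the reflection-length additivity — forces each cycle of $\pi_A$ to be contained in a single cycle of $\pi_B$, and hence $\ch(A)\subset\ch(B)$.

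For the $(\Rightarrow)$ direction, I would construct $Q$ polygon-by-polygon. Let $\Pi_1,\dots,\Pi_m$ be the polygons of $B$. Because $\ch(A)\subset\ch(B)$ and $A$ is non-crossing, each polygon of $A$ lies inside exactly one $\ch(\Pi_j)$; denote the sub-diagram of $A$ inside $\ch(\Pi_j)$ by $A_j$. I would view $\Pi_j$ as the fundamental element of the braid sub-group on the punctures forming its vertices, and invoke a local version of Corollary~\ref{cor:complementary} inside the sub-disk enclosed by $\Pi_j$ to obtain a canonical factor $Q_j$ with $A_jQ_j=\Pi_j$. Because distinct $\Pi_j$'s have disjoint vertex supports, the $Q_j$'s pairwise commute and their union is non-crossing, so $Q:=Q_1\cdots Q_m$ is a canonical factor. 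A direct computation using commutation of factors with disjoint supports then gives
\[
AQ = (A_1\cdots A_m)(Q_1\cdots Q_m) = \Pi_1\cdots\Pi_m = B.
\]

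The main obstacle will be making rigorous the restriction to a single polygon $\Pi_j$ of $B$ and applying Corollary~\ref{cor:complementary} in that smaller setting: one must verify that a canonical factor of $B_n$ whose diagram is supported on the vertices of $\Pi_j$ behaves, under the band-generator relations \eqref{eq:gp presentation of Bn}, exactly like a canonical factor in the braid group on those vertices, so that the complementary factor there is also a canonical factor of $\CFn$. A secondary technical point is the length-additivity claim $|A|+|Q|=|B|$ used in the $(\Leftarrow)$ direction, which is standard for intervals in Birman--Ko--Lee's partial order but should be stated carefully before being invoked.
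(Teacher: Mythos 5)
Your $(\Rightarrow)$ direction ($\ch(A)\subset\ch(B)$ implies $A\prec B$) has a genuine gap: the step where you ``invoke a local version of Corollary~\ref{cor:complementary}'' inside each polygon $\Pi_j$ of $B$ is not available to you. In this paper, Corollary~\ref{cor:complementary} is itself deduced from Proposition~\ref{prop:only-if-direction} (take $B=\delta$), i.e.\ from precisely the implication you are trying to prove, and the ``local version'' on the vertices of $\Pi_j$ is just the same statement in a smaller braid group, so the appeal is circular. Nor is this a deferrable technicality: producing a canonical factor $Q_j$ with $A_jQ_j=\Pi_j$ is the entire content of this implication, while your reduction to the blocks of $B$ is the easy part (the paper's proof of Proposition~\ref{prop:only-if-direction} performs essentially the same block-by-block reduction). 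What is missing is the actual construction of the complement, which is what Lemma~\ref{lem:A'}, Corollary~\ref{cor:A'} and Lemma~\ref{lem:B=AC} supply: first adjoin joining edges to fuse the components of $A_j$ into a single polygon, then adjoin the arcs of the polygon $\Pi_j$ not yet covered, verifying at each stage (via the $\diamond$-conditions and Theorem~\ref{thm:1-1correspondence}) that the accumulated product of added edges is itself a canonical factor. If you prefer your combinatorial framing, you could instead prove directly, at the braid level, that the Kreweras complement of $A_j$ inside $\Pi_j$ does the job, but that argument is essentially the same induction on the diagram and must be written out; as it stands, the crux of this direction is assumed rather than proved.

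By contrast, your $(\Leftarrow)$ direction ($A\prec B$ implies $\ch(A)\subset\ch(B)$) is correct and genuinely different from the paper's. The paper (Proposition~\ref{prop:if-direction}) argues via uniqueness of the left canonical form, Theorem~\ref{thm:LCF}, which is heavy machinery from a later section; you instead pass to the underlying permutations and use length additivity. Two points should be made explicit: the additivity $|A|+|Q|=|B|$ of band-generator lengths uses that the Birman--Ko--Lee monoid embeds in $B_n$ and that its defining relations are homogeneous; and the inference from $\ell_T(\pi_A)+\ell_T(\pi_Q)=\ell_T(\pi_B)$ to ``each cycle of $\pi_A$ lies inside a cycle of $\pi_B$'' needs the standard argument that, since $\ell_T(\sigma)=n-c(\sigma)$ and multiplying by a transposition changes the cycle count by $\pm1$, every step of a minimal-length transposition factorization of $\pi_B$ must merge two orbits, so the orbit partitions of the prefixes (in particular of $\pi_A$) only coarsen toward that of $\pi_B$. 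With those two facts spelled out, your half of the equivalence is arguably more elementary and self-contained than the paper's; it is only the other half that needs the paper's diagrammatic construction or an equivalent substitute.
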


The theorem will be proved in Propositions~\ref{prop:only-if-direction} and \ref{prop:if-direction}. 

Using Theorem~\ref{thm:subset}, we can easily obtain the Hasse diagram (Figure~\ref{fig:HasseDiagram}) for the partial ordering $\prec$ on the set $\CF$. 
Moreover we notice that $\prec$ and $\leq$ are equivalent partial orderings on $\CF$. 

 \begin{figure}[h]
 \centering
\includegraphics[width=8cm]{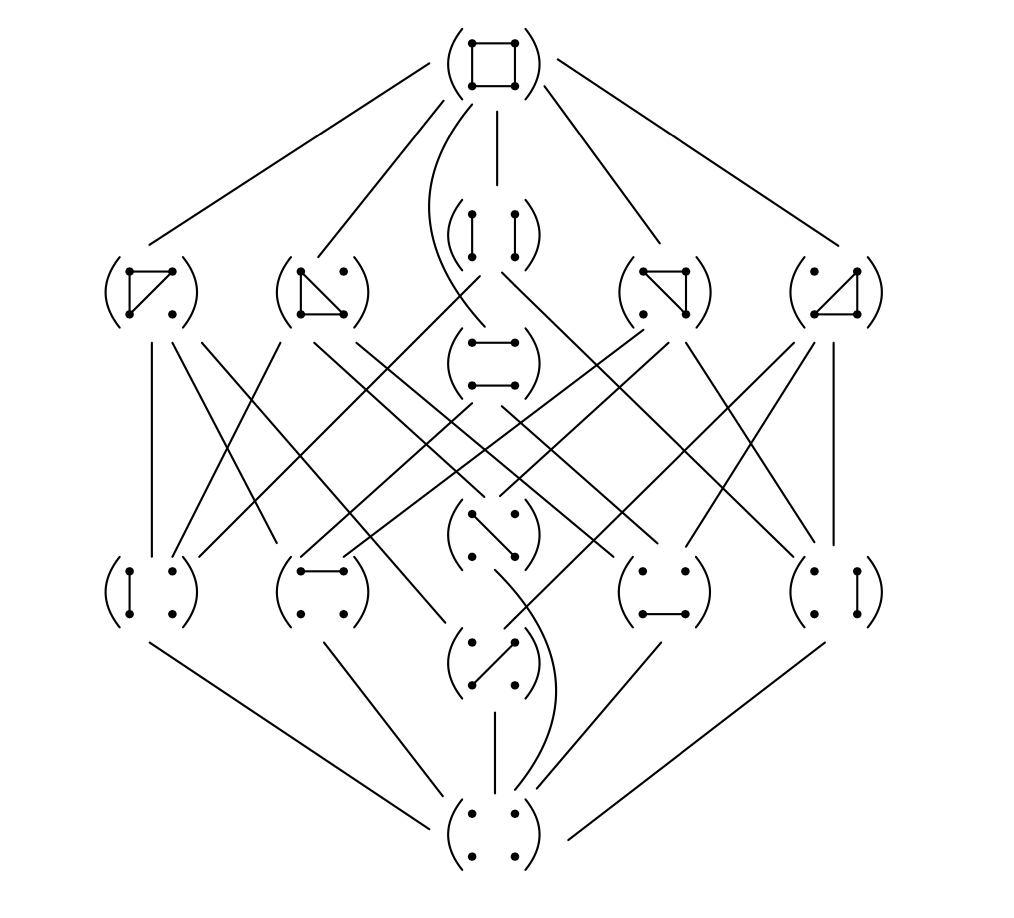}
 \caption{Hasse Diagram for the partially ordered set $(\CF, \prec)$} 
\label{fig:HasseDiagram}
\end{figure}

\subsection{Operations $\diamond$ and $\ast$ on $\CFn$.}
\label{sec:operations} 

To prove Theorem~\ref{thm:subset} we will introduce operations $\sqcup$, $\diamond$ and $\ast$ on $\CFn$.  
We use the convention that a canonical factor is denoted with a roman font (eg. $A$) and its corresponding diagram is denoted by the same alphabet in calligraphy style font (eg. $\A$).  

\begin{definition}
Let $A$ and $B$ be canonical factors whose diagrams do not intersect; $\A \cap \B = \emptyset$. The product $AB$ in the braid group $B_n$ is denoted by $A \sqcup B$ and the corresponding diagram is denoted by $\A \sqcup \B$. Since $AB=BA$ we have $A \sqcup B= B\sqcup A$.  
\end{definition}

\begin{definition}\label{def:facing}
(See Figure~\ref{fig:FacingPair}) 
Let $A\in \CFn$ be a canonical factor whose diagram $\mathcal A$ is disjoint union of polygon components. 
We say that components $\X$ and $\Y$ of $\A$ are {\em facing} to each other if there exist unique edges $x_0 \subset \X$ and $y_0 \subset \Y$ such that no other components of $\mathcal A$ lie between $x_0$ and $y_0$. 
The pair of edges $x_0$ and $y_0$ is called a {\em facing pair}. 
\end{definition}

\begin{figure}[h]
 \centering
\includegraphics[width=10cm]{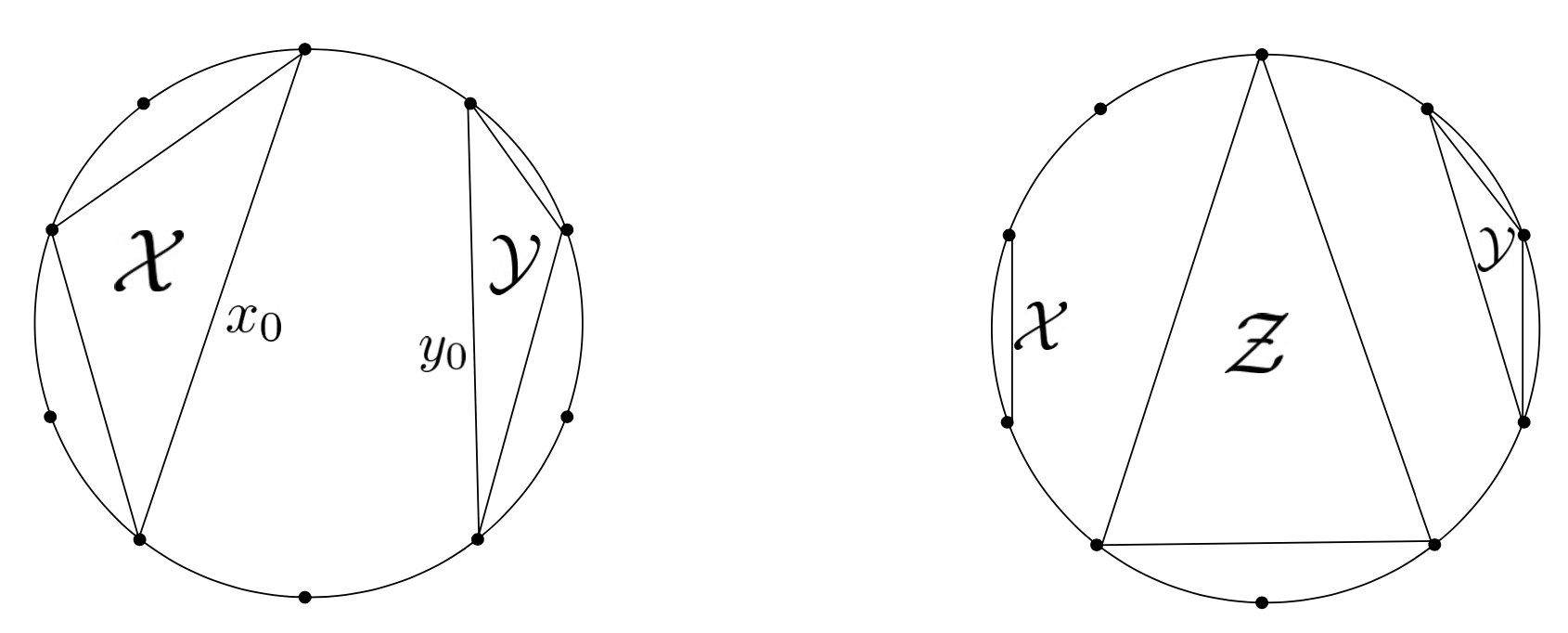}
 \caption{On the left, $A=X \sqcup Y$ and $\mathcal{X}$ and $\mathcal{Y}$ are facing each other. On the right, $A=X \sqcup Y \sqcup Z$. There exists a 3-gon $\Z$ in-between $\mathcal{X}$ and $\mathcal{Y}$, so $\X$ and $\Y$ are not facing each other.} 
\label{fig:FacingPair}
\end{figure}

The definition of the $\diamond$-operation is divided into 4 steps.

\begin{definition}\label{def:diamond}
({\bf Step 1.} See Figure~\ref{fig:Diamond}) 
Let $A, B \in \CFn$ be canonical factors such that 
\begin{enumerate}
\item
both of their diagrams $\A$ and $\B$ are connected,
\item
diagrams $\A$ and $\B$ intersect at a single vertex, say $P$, and 
\item
$\B$ lies on the left side of $\A$ near the vertex $P$ (if you stand on $p$ and see the interior of the disk in front of you). 
\end{enumerate}
Let $a_0$ and $b_0$ be the unique edges of $\A$ and $\B$ respectively such that $a_0 \cap b_0= P$ and $\A \cup \B$ lies outside the fan-shape region between $a_0$ and $b_0$. 
Assume the polygon $\A$ (resp. $\B$) has $k$ (resp. $l$) sides. 
Label the edges of $\A$ starting from $a_0$ clockwise, $a_1, \dots, a_{k-1}$.  
Label the edges of $\B$ starting from $b_0$ clockwise, $b_1, \dots, b_{l-1}$. 
Abusing the notation, each $a_i$ and $b_j$ represents a positive band generator.

When $A$ and $B$ satisfy the above conditions (and if we emphasize this fact), the usual product $AB$ in the braid group $B_n$ is denoted by $A\diamond B$. 
$$A\diamond B:= AB = a_1 \cdots a_{k-1} b_1 \cdots b_{l-1}.$$ 
\end{definition}

\begin{figure}[h]
 \centering
\includegraphics[width=10cm]{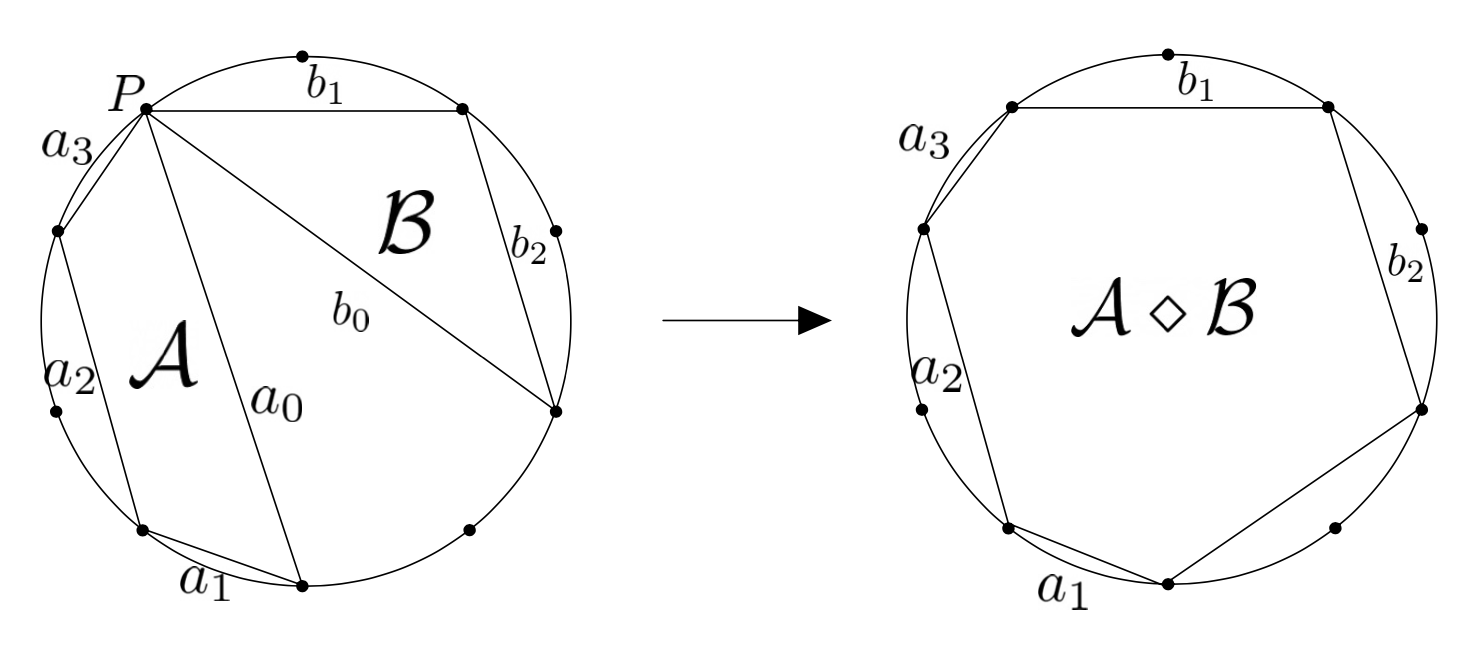}
 \caption{An example of Definition~\ref{def:diamond}, $\A\diamond \B$. The left image shows two polygons that intersect at the point $P$. Here, $\A$ is a 4-gon, $\B$ is a 3-gon, and $\A\diamond\B$ is a 6-gon.} 
\label{fig:Diamond}
\end{figure}

We list properties of $A\diamond B$: 
\begin{itemize}
\item
The diagram of $A\diamond B$, which is denoted by $\A\diamond \B$, is a single polygon with  $k+l-1$ sides. 
\item
By Theorem~\ref{thm:1-1correspondence} the product $A\diamond B$ is a canonical factor.  
\item
$
\ch(A) \cup \ch(B) \subset \ch(A\diamond B). 
$
More precisely, $\A\diamond \B$ is the minimal polygon whose convex hull contains $\A$ and $\B$.
\end{itemize} 

By the condition (3), $B \diamond A$ does not make sense, i.e., $\diamond$ is a non-commutative operation. 

A simple example of the operation $\diamond$ is: 
$$(\afour) \diamond (\athree) = (\afourathree)$$

We will extend the operation $\diamond$ to less restrictive pairs of canonical factors. 

\begin{definition}\label{def:diamond2}
({\bf Step 2.} See Figure~\ref{fig:Step2}) 
Let $A=A_1 \sqcup \cdots \sqcup A_k$ and $B\in \CFn$ be canonical factors such that 
\begin{itemize}
\item
each of the diagrams $\A_1, \dots, \A_k$ and $\B$ is a connected component.
\item
for each $i=1, \dots, k$ the pair $(A_i, B)$ satisfy the (Step 1) condition; namely, the intersection $\A_i \cap \B $ is a single vertex, say $P_{A_i}$, of the disk $D_n$, and 
\item
in a small neighborhood of $P_{A_i}$, the diagram $\B$ lies on the left of $\A_i$.
\end{itemize}
We extend the operation $\diamond$ to the pair $A$ and $B$ by
$$
A\diamond B := A_1 \diamond (A_2 \diamond ( \cdots \diamond (A_k \diamond B)))
$$
where the $\diamond$ in the right hand side is defined in (Step 2) in Definition~\ref{def:diamond}. 
\end{definition}

\begin{figure}[h]
 \centering
\includegraphics[width=12cm]{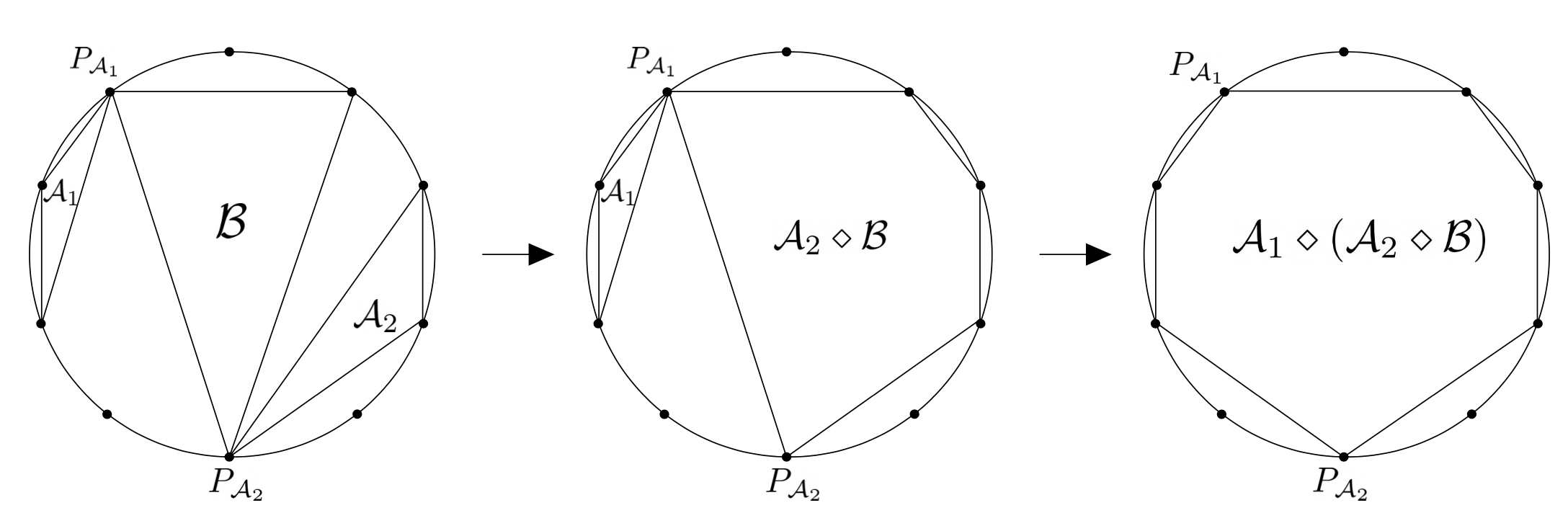}
 \caption{An example of Definition~\ref{def:diamond2}. We have $\A=\A_1\sqcup \A_2$ and a 3-gon $\B$. We apply Definition~\ref{def:diamond2} to obtain $\A\diamond\B=\A_1\diamond(\A_2\diamond\B)$.}
\label{fig:Step2}
\end{figure}

We remark that $\diamond$ is well defined since for every permutation $\sigma \in S_k$ of $k$ elements, 
$$
A_1 \diamond (A_2 \diamond ( \cdots \diamond (A_k \diamond B))) =
A_{\sigma(1)} \diamond (A_{\sigma(2)} \diamond ( \cdots \diamond (A_{\sigma (k)} \diamond B))).
$$

\begin{proposition}\label{prop:list of properties of diamond}
We list properties of the operation $\diamond$ below: 
\begin{itemize}
\item[{\rm (a)}]
As braids, $A\diamond B = AB$. 
\item[{\rm (b)}]
The corresponding diagram of $A\diamond B$, denoted by $\A \diamond \B$, is the minimal disjoint union of polygons whose convex hull contains $\ch(A)$ and $\ch(B)$.  
\item[{\rm (c)}]
In particular, $\ch(A) \cup \ch(B) \subset \ch(A\diamond B)$. 
\item[{\rm (d)}]
$A\diamond B \in\CFn$ by Theorem~\ref{thm:1-1correspondence}. 
\end{itemize} 
\end{proposition}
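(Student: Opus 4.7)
The plan is to verify the four items in order, with (a) and (b) carrying the real content and (c), (d) falling out as corollaries. The overall strategy is an induction on the number of components of $A$ that mirrors the inductive definition of $\diamond$, with the Step~1 case providing the geometric core.

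First I would prove (a) by induction on the number $k$ of connected components of $\A$. The base case $k=1$ is the Step~1 definition $A\diamond B := AB$. For the inductive step, unfolding the definition gives $A\diamond B = A_1\diamond\bigl(A_2\diamond\bigl(\cdots\diamond(A_k\diamond B)\bigr)\bigr)$, and the inductive hypothesis together with the base case replaces each nested $\diamond$ by a braid product. Since the components $A_1,\dots,A_k$ are pairwise disjoint in $D_n$, the corresponding band generators pairwise satisfy the far-commutativity relation in~\eqref{eq:gp presentation of Bn}, so $A_1A_2\cdots A_k=A$ in $B_n$ and (a) follows.

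For (b), I would first settle the Step~1 case and then induct. The key geometric observation is that the $n$ punctures of $D_n$ lie in convex position on a circle, so any subset of them is in convex position as well; in particular, the $k+l-1$ vertices of $\A\cup\B$ are all extreme points of their convex hull. The hypotheses on $\A$ and $\B$ (sharing only $P$, with $\B$ on the left of $\A$ near $P$, and $\A\cup\B$ outside the fan between $a_0$ and $b_0$) force $a_0$ and $b_0$ to be exactly the two edges of $\A\cup\B$ that fail to lie on the boundary of the convex hull; they are replaced by a single new edge $c$ joining the far endpoints of $a_0$ and $b_0$, yielding a $(k+l-1)$-gon. Reading this $(k+l-1)$-gon clockwise and skipping $c$ produces $a_1\cdots a_{k-1}b_1\cdots b_{l-1}=AB$, so by the polygon-to-braid correspondence of Section~\ref{Sec2}, the diagram of $AB$ is precisely this convex hull, which is manifestly the minimal single polygon containing $\A$ and $\B$. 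For Step~2 I would induct on $k$: each intermediate $A_{i+1}\diamond\cdots\diamond(A_k\diamond B)$ is a single polygon containing $\B$, and the crucial check is that it still meets $A_i$ in the single vertex $P_{A_i}$ with the correct left-orientation. This holds because the vertices picked up when merging any $A_j$ ($j>i$) into $\B$ all differ from $P_{A_i}$ (the $A_j$'s being pairwise disjoint), and the local picture at $P_{A_i}$ agrees with that in $\B$ alone.

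Properties (c) and (d) then follow quickly. Property (c) is implicit in the convex-hull description from (b): the convex hull contains each polygon it is built from, and the interior region enclosed by each polygon is included in its own convex hull. For (d), the output of Steps~1 and~2 is a single polygon (hence a trivially disjoint union of polygons) with vertices at punctures of $D_n$ and no crossing edges, so the diagram $\A\diamond\B$ is a non-crossing partition diagram; Theorem~\ref{thm:1-1correspondence} then yields $A\diamond B\in\CFn$. The main obstacle I anticipate is the Step~2 bookkeeping: verifying that at every stage of the iterated merge the new edge $c$ introduced by the preceding Step~1 application does not pass through or cross any of the remaining $A_j$'s, and that the ``left of $A_j$'' condition survives unchanged. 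This reduces to the convex position of the punctures together with the pairwise disjointness of the original $\A_j$'s, but the careful tracking is where any genuine work in the proof will lie.
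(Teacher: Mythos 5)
The paper never actually proves this proposition: items (a)--(d) are presented as immediate consequences of the construction in Definitions~\ref{def:diamond} and~\ref{def:diamond2} (the same bullets already appear after Step~1), with the minimality in (b) simply asserted. Your proposal supplies a genuine argument, and its structure --- induction on the number of components of $\A$ mirroring the inductive definition, with the Step~1 case settled by the observation that the punctures lie in convex position, so that $\A\diamond\B$ is precisely the boundary polygon of the convex hull of the combined vertex set --- is correct and is the natural way to make the paper's assertions rigorous; it also makes the minimality claim in (b) and properties (c), (d) transparent, with (d) following from Theorem~\ref{thm:1-1correspondence} exactly as in the paper.

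Two points deserve more care than your sketch gives them. First, when $\A$ or $\B$ is a 2-gon, the claim that $a_0$ and $b_0$ are ``exactly the two edges of $\A\cup\B$ not on the hull'' only holds under the paper's convention that a 2-gon carries two coincident edges $b_0,b_1$, one of which survives as a hull edge; this is cosmetic. Second, your justification of the Step~2 induction --- that ``the local picture at $P_{A_i}$ agrees with that in $\B$ alone'' --- is not literally true: if $P_{A_i}$ is the far endpoint $Q_B$ of the edge of $\B$ that gets deleted when an earlier component $\A_j$ is merged, then one of the two edges of the intermediate polygon at $P_{A_i}$ is replaced by the new closing edge toward $Q_{A_j}$. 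The induction still goes through, because each component lives in the gap arc of $\B$ counterclockwise-adjacent to its own attachment vertex, so the fan arc of $(\A_j,\B)$ contains no vertex of $\A_i$; hence the new closing edge cannot meet $\A_i$, and the replaced edge at $P_{A_i}$ only swings further to the left, preserving both the single-vertex intersection and the left condition of Definition~\ref{def:diamond}. This is exactly the check you flag as ``where the work lies,'' and it should be argued along these lines rather than waved away.
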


For Step 2, with regard to {\rm (b)} the diagram $\A\diamond\B$ is a single polygon.

\begin{definition}\label{def:diamond3}
({\bf Step 3.} See Figure~\ref{fig:Step3})
Let $A = A_1 \sqcup \cdots \sqcup A_k \sqcup A'$ and $B\in \CFn$ be canonical factors such that 
\begin{itemize}
\item
the pair $(A_1\sqcup \dots\sqcup A_k, B)$ satisfy the (Step 2) condition for Definition~\ref{def:diamond2},
\item
and $\A' \cap \B =\emptyset$. 
\end{itemize} 
We define 
$$A \diamond B := A' \sqcup ((A_1 \sqcup \cdots \sqcup A_k) \diamond B)
$$
where $\diamond$ on the right hand side is in the sense of (Step 2). 
The disjoint operation in the right hand side between $A'$ and $(A_1 \sqcup \cdots \sqcup A_k) \diamond B$ is justified by the above property $(b)$ and the disjointness $\A' \cap \B =\emptyset$. 
The diagram $\A\diamond \B$ is the disjoint union of $\A'$ and the polygon $(\A_1 \sqcup \cdots \sqcup \A_k) \diamond \B$. 
Therefore, above properties (a), (b), (c), and (d) also hold. 
\end{definition}

\begin{figure}[h]
 \centering
\includegraphics[width=10cm]{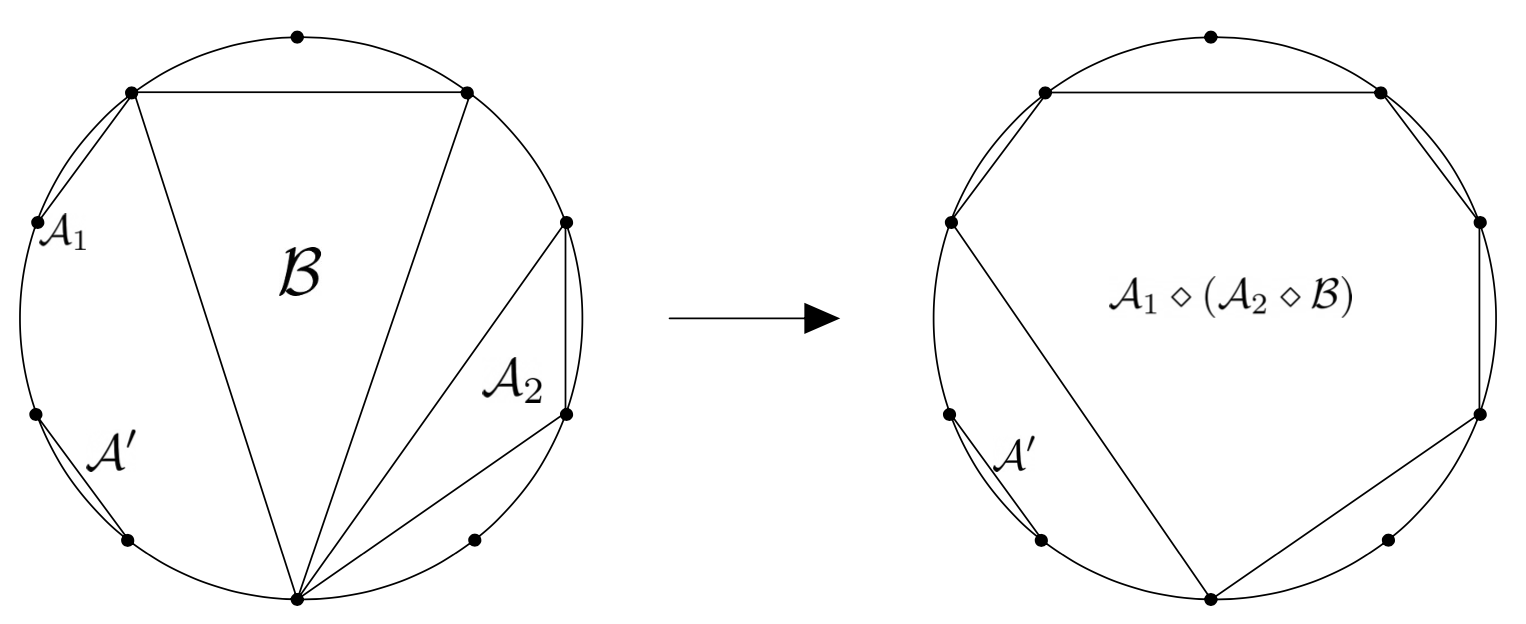}
\caption{An example of Definition~\ref{def:diamond3}. We have $\A=\A_1\sqcup \A_2\sqcup \A'$ and a 3-gon $\B$. We apply Definition~\ref{def:diamond3} to obtain $\A\diamond \B=\A'\sqcup((\A_1\sqcup \A_2)\diamond \B)$.}
\label{fig:Step3}
\end{figure}

\begin{definition}\label{def:diamond4}
({\bf Step 4}) 
Let $A, B \in \CFn$ such that 
\begin{itemize}
\item
$\A \cap \B = \ch(A) \cap \ch(B) \subset \{ \mbox{vertices of } D_n\}$ and 
\item
if $P\in \A\cap\B$, near $P$ the diagram $\B$ lies on the left of $\A$. 
\end{itemize}
Therefore, $\A$ and $\B$ do not share any edges or points away from the vertices.
It is possible that $\A \cap \B = \emptyset$.

The factor may admit a decomposition $B = B_1 \sqcup \cdots \sqcup B_l \sqcup B' \in \CFn$ such that for each $i=1, \dots, l,$ the pair $(A, B_i)$ satisfy (Step 3) condition in Definition~\ref{def:diamond3} and $\A \cap \B' = \emptyset.$

We define
$$A \diamond B := B' \sqcup ((((A \diamond B_1) \diamond B_2 ) \diamond \cdots ) \diamond B_l)
$$
where the $\diamond$ in the right hand side is in the sense of  Definition~\ref{def:diamond3}. 
The operation $\diamond$ is independent of the ordering of factors $B_i$s. 
The above properties (a), (c) and (d) are also satisfied. 
\end{definition} 

Next we define the operation $*$. 

\begin{definition}\label{def:operation-ast}
(See Figure~\ref{fig:Star}) 
Let $A, B \in \CFn$ be canonical factors such that 
\begin{itemize}
\item 
both of their diagrams $\A$ and $\B$ are connected, and
\item
their diagrams do not intersect $\A \cap \B = \emptyset$ (i.e., $AB=BA$ in $B_n$). 
\end{itemize}
Suppose that the polygon $\A$ (resp. $\B$) has $k$ (resp. $l$) sides. 
Let $a_0$ and $b_0$ be edges of $\A$ and $\B$ that form the facing pair.  
Edges of $\A$ and $\B$ are named $a_0,\dots,a_{k-1}$ and $b_0,\dots, b_{l-1}$ labeled clockwise. 
Thus
$A=a_1 \cdots a_{k-1}$ and $B=b_1\cdots b_{l-1}$. 
Denote by $P_A$ the vertex of $\A$ where the edges $a_0$ and $a_{k-1}$ meet. 
Similarly, denote by $P_B$ the vertex of $\B$ where the edges $b_0$ and $b_{l-1}$ meet.
Let $\C$ denote the edge joining the vertices $P_A$ and $P_B$. 
We call $\C$ the {\em joining edge} of $\A$ and $\B$. 
The joining edge $\C$ corresponds to a positive band generator, denoted by $C \in \BG(B_n)$ and can be thought as a bigon (cf. Remark~\ref{rem:2-gon}.) 
Define an operation $\ast$ by 
$$
A\ast B := A\diamond (B\diamond C)=B\diamond (A\diamond C)=(A\sqcup B)\diamond C.
$$
As braids, $A\ast B = ABC=BAC$ in the braid group $B_n$.  
\end{definition}

\begin{figure}[h]
 \centering
\includegraphics[width=12cm]{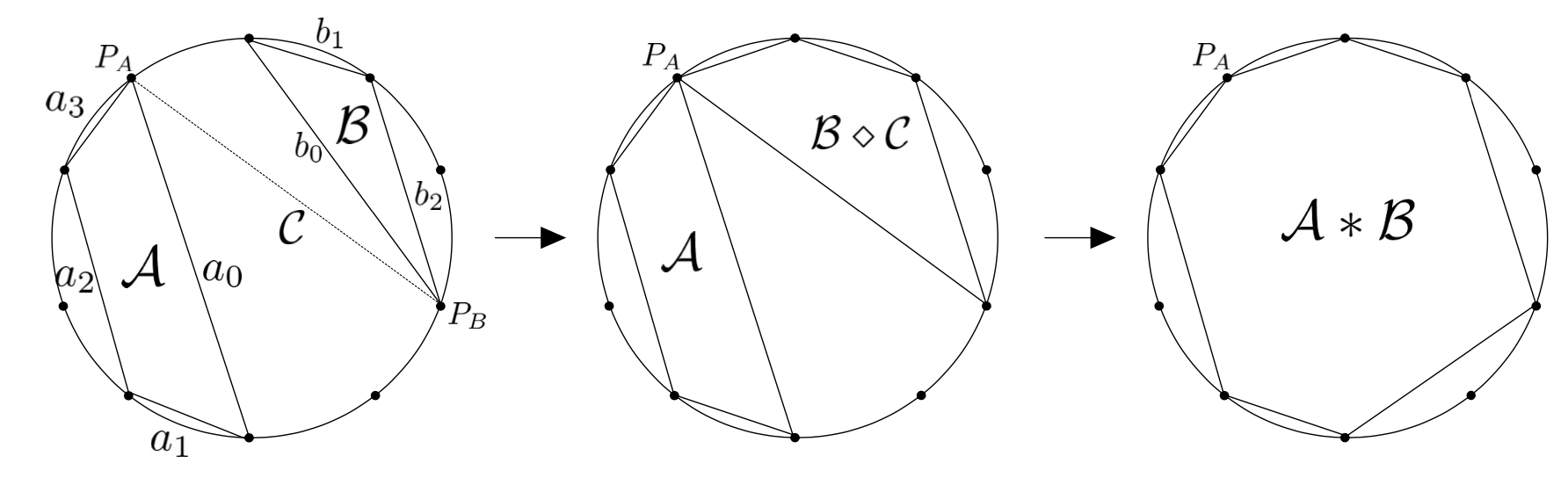}
 \caption{An example of the operation $\A\ast\B$. The left image shows two polygons that do not intersect. $\A$ is a 4-gon, $\B$ is a 3-gon, and $\C$ is the joining edge. The middle image shows the computation of $\B\diamond\C$. The right image shows the computation of $\A\ast\B$.} 
\label{fig:Star}
\end{figure}

\begin{remark}\label{remark:joining edge}
For later use, we note that in neighborhood of $P_A$ the joining edge $\C$ lies on {\em left} of $\ch(A)$. Likewise, near $P_B$ the edge $\C$ lies on  {\em left} of $\ch(B)$.   
\end{remark}

We list properties of the operation $*$ immediately follow from the definition. 

\begin{proposition}\label{prop:property of ast}
The operation $\ast$ is commutative:
$A\ast B = B \ast A.$
The diagram of $A\ast B$, denoted by $\A \ast \B$, satisfies 
$$
\A\ast\B=\A \diamond (\B \diamond \C)=\B \diamond (\A \diamond \C)=(\A \sqcup \B) \diamond \C
$$ 
and it is a single polygon with $k+l$ sides.  
Thus, by Theorem~\ref{thm:1-1correspondence}, we see that $A\ast B\in \CFn$.
The definition also implies
$$
\ch(A) \sqcup \ch(B) \subset \ch(A \ast B).
$$
\end{proposition}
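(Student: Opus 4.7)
The plan is to verify the four assertions of the proposition in turn, with the displayed three-way identity serving as the technical core. First, I would check that each of the three expressions $\A\diamond(\B\diamond\C)$, $\B\diamond(\A\diamond\C)$, and $(\A\sqcup\B)\diamond\C$ is a legitimate application of the $\diamond$-operation defined in Definitions~\ref{def:diamond}--\ref{def:diamond3}. The essential input is Remark~\ref{remark:joining edge}: the joining edge $\C$ meets $\A$ only at the single vertex $P_A$ and lies to the left of $\A$ there, with the analogous statement at $P_B$. These are exactly the Step 1 hypotheses needed to form $\A\diamond\C$ and $\B\diamond\C$; iterating them and using $\A\cap\B=\emptyset$ furnishes the Step 1--Step 3 hypotheses for $\A\diamond(\B\diamond\C)$ and $\B\diamond(\A\diamond\C)$. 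For $(\A\sqcup\B)\diamond\C$, the Step 2 hypothesis of Definition~\ref{def:diamond2} holds because $\A\cap\B=\emptyset$ and both pairs $(\A,\C)$ and $(\B,\C)$ are Step 1 pairs.

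Next, I would show the three expressions represent the same canonical factor by a braid computation. Because $\A\cap\B=\emptyset$, the braids $A$ and $B$ commute in $B_n$ as noted in the disjoint-union discussion preceding Definition~\ref{def:operation-ast}. Applying Proposition~\ref{prop:list of properties of diamond}(a) to each expression, all three evaluate to the same element of $B_n$, namely $ABC=BAC$. Each expression is a canonical factor by Proposition~\ref{prop:list of properties of diamond}(d), so Theorem~\ref{thm:1-1correspondence}, which says the diagram determines the canonical factor uniquely, forces the three diagrams to coincide. This simultaneously proves the displayed equation and shows $A\ast B\in\CFn$.

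For the side count, I would use the formula implicit in Step 1 of Definition~\ref{def:diamond}: combining a $p$-gon and a $q$-gon at a shared vertex produces a $(p+q-1)$-gon. Since $\C$ is a 2-gon (Remark~\ref{rem:2-gon}), $\B\diamond\C$ is an $(l+1)$-gon, and then $\A\diamond(\B\diamond\C)$ is a $(k+l)$-gon. Commutativity of $\ast$ is immediate from the third identity in the displayed equation together with $A\sqcup B=B\sqcup A$: $A\ast B=(A\sqcup B)\diamond C=(B\sqcup A)\diamond C=B\ast A$. For the convex hull containment, iterating Proposition~\ref{prop:list of properties of diamond}(c) through $\A\diamond(\B\diamond\C)$ gives $\ch(A),\ch(B)\subset\ch(A\ast B)$; since $\A$ and $\B$ are disjoint single polygons with disjoint vertex sets, $\ch(A)\cap\ch(B)=\emptyset$, yielding the disjoint containment $\ch(A)\sqcup\ch(B)\subset\ch(A\ast B)$.

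The main obstacle is the bookkeeping for the ``left-of'' orientation conditions required throughout Definitions~\ref{def:diamond}--\ref{def:diamond3}, which must be verified at every intermediate application of $\diamond$. This is precisely the content of Remark~\ref{remark:joining edge}: it guarantees the correct sidedness of $\C$ at both $P_A$ and $P_B$. Once well-definedness is in hand, the remainder is a clean braid-word computation followed by a single application of the canonical-factor-to-diagram bijection of Theorem~\ref{thm:1-1correspondence}.
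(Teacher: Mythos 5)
Your proposal is correct, and it follows the route the paper intends: the paper offers no separate proof, stating that these properties follow immediately from Definition~\ref{def:operation-ast} together with Remark~\ref{remark:joining edge}, Proposition~\ref{prop:list of properties of diamond}, and Theorem~\ref{thm:1-1correspondence}, which are exactly the ingredients you assemble. Your write-up simply makes explicit the well-definedness checks (the left-of conditions at $P_A$ and $P_B$), the side count, and the braid identity $ABC=BAC$ that the paper leaves to the reader.
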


\subsection{Proof of Theorem~\ref{thm:subset}}
\label{sec:proof of thm}

\begin{lemma}\label{lem:A'} {\rm(See Figure~\ref{fig:LemA})}
Let $A, B \in \CFn$ be canonical factors such that 
\begin{itemize}
\item
the diagram $\B$ is connected, 
\item
the diagram $\A$ has $k(>1)$ connected components, and 
\item
$\ch(A) \subset \ch(B).$
\end{itemize} 
Then there exists a band generator $C \in \BG(B_n)$ such that such that $A':=AC$ satisfies
\begin{itemize}
\item 
$A'=A\diamond C$, 
\item
$A'$ is a canonical factor, 
\item
its diagram $\A'$ has $k-1$ connected components, and 
\item
$\ch(A) \cup \C \subset \ch(A') \subset \ch(B).$
\end{itemize}
\end{lemma}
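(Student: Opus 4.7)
The plan is to locate two components of $\mathcal A$ that face each other, set $C$ to be the joining edge between them, and verify the four conclusions using the machinery of $\diamond$ and $\ast$ from \S\ref{sec:operations}.

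First, I would note that since $\mathcal B$ is a single polygon inscribed in the puncture circle and every puncture lies on that circle, a convex inscribed polygon contains no other puncture in its interior. Thus $\ch(A)\subset\ch(B)$ forces every vertex of $\mathcal A$ to be a vertex of $\mathcal B$. Next, because $\mathcal A$ has $k>1$ disjoint noncrossing polygon components, there exist two components $\mathcal X,\mathcal Y$ of $\mathcal A$ with a facing pair of edges $(x_0,y_0)$ in the sense of Definition~\ref{def:facing}; indeed, starting from any two components and repeatedly replacing one with a ``closer'' component whenever the current pair fails to face, one terminates at a facing pair since the total number of components is finite.

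Second, I would let $C\in\BG(B_n)$ be the joining edge associated with this facing pair, as in Definition~\ref{def:operation-ast}. Its endpoints $P_{\mathcal X},P_{\mathcal Y}$ are vertices of $\mathcal B$, so convexity of $\ch(B)$ puts the segment $\mathcal C$ inside $\ch(B)$. By Remark~\ref{remark:joining edge}, $\mathcal C$ lies on the left of $\ch(X)$ near $P_{\mathcal X}$ and on the left of $\ch(Y)$ near $P_{\mathcal Y}$, and the facing condition ensures $\mathcal C$ meets no other component of $\mathcal A$. Thus $A\diamond C$ is defined in the sense of Definition~\ref{def:diamond3}, and
\[
A\diamond C=\Bigl(\bigsqcup_{A_i\neq X,Y} A_i\Bigr)\sqcup\bigl((X\sqcup Y)\diamond C\bigr)=\Bigl(\bigsqcup_{A_i\neq X,Y} A_i\Bigr)\sqcup (X\ast Y),
\]
a disjoint union of $k-1$ polygons.

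Third, I would set $A':=A\diamond C$. Proposition~\ref{prop:list of properties of diamond}(a) gives $A'=AC$ as braids, (d) gives $A'\in\CFn$, and (c) gives $\ch(A)\cup\mathcal C\subset\ch(A')$. For the remaining inclusion $\ch(A')\subset\ch(B)$: each unchanged component $A_i$ already satisfies $\ch(A_i)\subset\ch(A)\subset\ch(B)$, while the merged polygon $X\ast Y$ has vertex set equal to the union of the vertex sets of $X$ and $Y$, all of which are vertices of $\mathcal B$, so $\ch(X\ast Y)\subset\ch(B)$ by convexity of $\ch(B)$. The main obstacle I anticipate is making rigorous the existence of a facing pair of components; the geometric picture is intuitive, but one has to use noncrossing-ness carefully to rule out configurations in which every pair of components is separated by a third. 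Once that is in hand, the rest is bookkeeping based on Proposition~\ref{prop:list of properties of diamond} and the setup of the $\diamond$ operation.
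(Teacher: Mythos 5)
Your proposal is correct and follows essentially the same route as the paper's proof: locate a facing pair of components of $\A$, take $C$ to be their joining edge, set $A' = A \diamond C = (X \ast Y) \sqcup (\text{remaining components})$, and read off the conclusions from the properties of $\diamond$ and $\ast$. The only difference is that you make explicit two points the paper simply asserts --- the existence of a facing pair and the inclusion $\ch(A') \subset \ch(B)$ via convexity of $\ch(B)$ --- which is a harmless (indeed welcome) addition.
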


\begin{proof}
The diagram $\A$ contains a disjoint pair of connected components facing to each other. Call them $\X$ and $\Y$ and their corresponding canonical factors are denoted by $X$ and $Y$ respectively. 
Therefore, $A = X \sqcup Y \sqcup Z$ for some $Z \in \CFn$. 
Denote the joining edge of $\X$ and $\Y$ by $\mathcal C$. 
Let $C\in \BG(B_n)$ denote the corresponding positive band generator for $\mathcal C$.
Note that the pair $(A, C)$ satisfy the (Step 3) condition of $\diamond$. 
We may define
$$A':=A\diamond C=(X \ast Y) \sqcup Z$$
Its diagram $\A'$ is exactly $\A$ except for the two components $\X$ and $\Y$ are replaced by the single polygon $\X \ast \Y=\X\diamond (\Y\diamond \mathcal C)$. 
Therefore, $A'$ is a canonical factor by Theorem~\ref{thm:1-1correspondence} and 
$\A'$ consists of $n-1$ components. By the nature of the operation $\ast$ we have the inclusions $\ch(A) \subset \ch(A') \subset \ch(B).$
\end{proof}

\begin{figure}[h]
 \centering
\includegraphics[width=10cm]{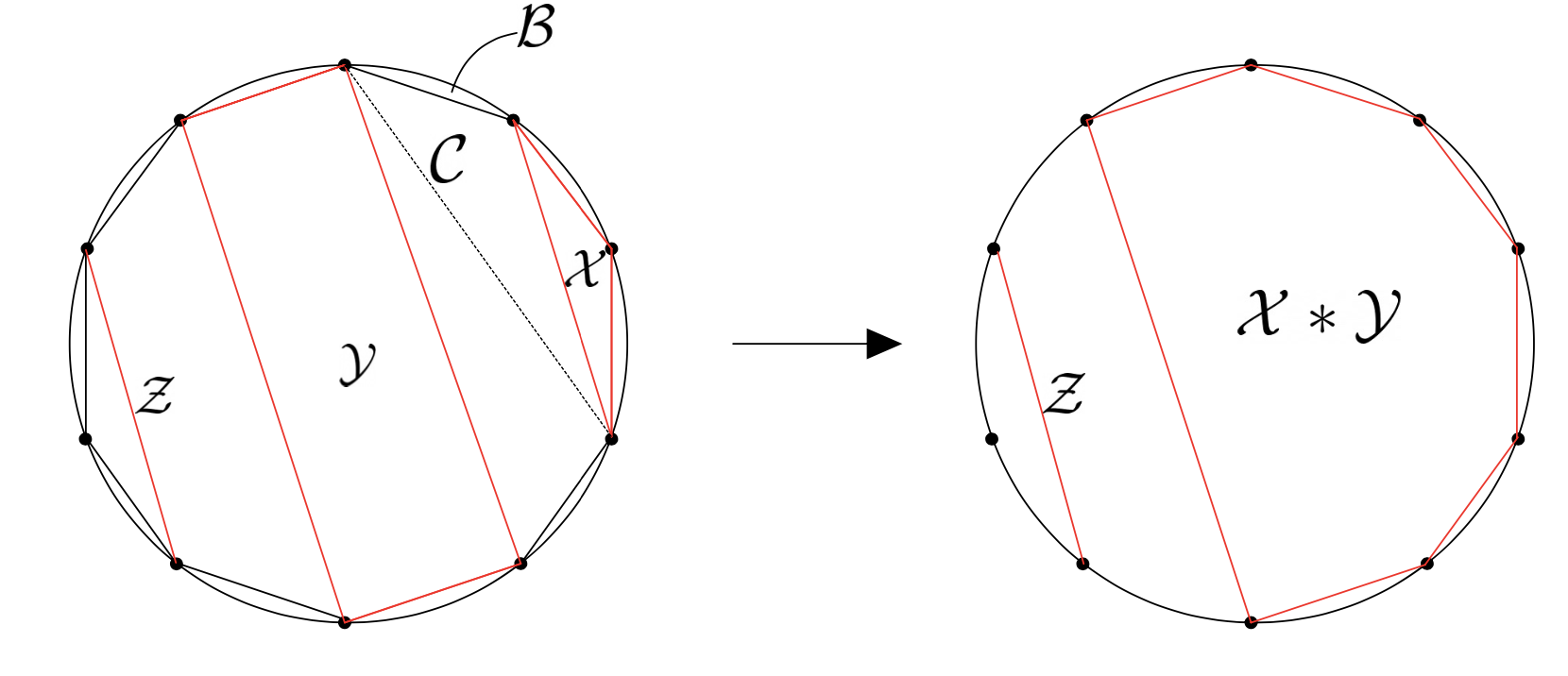}
 \caption{The above figure is an example of Lemma \ref{lem:A}. The left image contains a 10-gon $\B$ and a 3-component diagram $\A = \X \sqcup \Y \sqcup \Z$ in red. On the right image, we compute $\A'=(\X \ast \Y) \sqcup \Z$, which is a 2-component diagram in red.} 
\label{fig:LemA}
\end{figure}

\begin{corollary}\label{cor:A'}
{\rm(See Figures~\ref{fig:CorA} \& \ref{fig:CorA2})} Let $A, B \in \CFn$ be canonical factors such that 
\begin{itemize}
\item
the diagram $\B$ is connected, 
\item
the diagram $A = A_1 \sqcup\cdots\sqcup A_k$ where $k>1$ and $\A_i$ is connected, and 
\item
$\ch(A) \subset \ch(B).$
\end{itemize} 
Then there exists a canonical factor $C \in \CFn$ such that $A':=AC$ satisfies 
\begin{itemize}
\item
$A'=A\diamond C$,
\item
$A'$ is a canonical factor, 
\item
its diagram $\A'$ is a single polygon, and 
\item
$\ch(A) \subset \ch(A') \subset \ch(B).$
\end{itemize}
Moreover, up to reordering of the disjoint components, $$A' = (((A_1\ast A_2) \ast A_3) \ast \cdots A_{k-1})\ast A_k.$$
\end{corollary}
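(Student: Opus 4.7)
The plan is to iterate Lemma~\ref{lem:A'} a total of $k-1$ times, inducting on $k$. Each application reduces the component count by one while preserving the nested containment $\ch(A) \subset \ch(\,\cdot\,) \subset \ch(B)$.

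For the base case $k = 2$, the two components $\A_1, \A_2$ are automatically a facing pair (no other component can block them), so Lemma~\ref{lem:A'} supplies a joining band generator $C \in \BG(B_n) \subset \CFn$ with $A' := A \diamond C$ a single polygon; by the definition of $\ast$ (Definition~\ref{def:operation-ast}) this polygon is exactly $A_1 \ast A_2$. The required containment $\ch(A) \subset \ch(A') \subset \ch(B)$ is the conclusion of Lemma~\ref{lem:A'}.

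For the inductive step $k > 2$, one application of Lemma~\ref{lem:A'} produces a band generator $C_1$ and an intermediate canonical factor $A^{(1)} := A \diamond C_1$ obtained from $A$ by fusing some facing pair $A_i, A_j$ into the single polygon $A_i \ast A_j$; all other components persist. Thus $A^{(1)}$ has $k-1$ connected components inside $\ch(B)$, so the inductive hypothesis applied to the pair $(A^{(1)}, B)$ yields a canonical factor $C'$ with $A' := A^{(1)} \diamond C' = A \cdot C_1 C'$ a single polygon satisfying $\ch(A^{(1)}) \subset \ch(A') \subset \ch(B)$, expressed as an iterated $\ast$ of the components of $A^{(1)}$. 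Substituting $A_i \ast A_j$ back in and relabeling yields the form $(((A_1 \ast A_2) \ast A_3) \ast \cdots) \ast A_k$ up to reordering of the $A_\ell$.

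To finish, set $C := C_1 \cdot C'$ so that $A' = A \cdot C$. Then $C$ is a canonical factor because $A, A' \in \CFn$ with $A \leq A'$, so by the Birman--Ko--Lee Garside structure on $\CFn$ the right quotient $A^{-1} A' \in \CFn$. The main obstacle I anticipate is justifying the ``up to reordering'' clause: $\ast$ is commutative by Proposition~\ref{prop:property of ast} but not explicitly associative, and the facing pair selected at each step depends on the geometry of $\A \subset \ch(B)$. The resolution is that the final polygon $A'$ is characterized by its vertex set (the union of vertex sets of the $A_\ell$) inside $\ch(B)$, so any ordering of the successive $\ast$-merges produces the same single polygon, which gives us the freedom to relabel the $A_\ell$ into the displayed linear order.
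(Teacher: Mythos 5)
Your overall strategy is the same as the paper's: iterate Lemma~\ref{lem:A'} $k-1$ times (the paper does this in one pass rather than by formal induction, but that is cosmetic), and the conclusions about $A'$ being a canonical factor, being a single polygon, and satisfying $\ch(A)\subset\ch(A')\subset\ch(B)$ come out of that iteration exactly as you say. Your Garside-complement argument for $C\in\CFn$ is also fine --- in fact it is simpler than you make it, since $C=C_1C'$ is a positive word and $\delta=PA'Q=P A C Q$ with $P A, Q$ positive already gives $e\leq C\leq\delta$ under Definition~\ref{def:CF}.

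The genuine gap is the first bullet of the statement, $A'=A\diamond C$, which you never establish. What your induction actually produces is $A'=(A\diamond C_1)\diamond C'$, and passing from this to $A\diamond(C_1C')$ is a re-association of a partially defined operation: $\diamond$ is only defined when the pair satisfies the geometric conditions of Definition~\ref{def:diamond4} (the diagrams meet only at vertices, with $\C$ to the left of $\A$ at each shared vertex), and knowing merely that $C\in\CFn$ as an abstract element says nothing about how the diagram of $C$ sits relative to $\A$. This is precisely where the paper spends the bulk of its proof: it shows the joining edges $\C_1,\dots,\C_{k-1}$ have pairwise disjoint interiors, and that when two of them meet at a vertex they do so in the counterclockwise (left-veering) manner coming from Remark~\ref{remark:joining edge}; these two facts give that $C=C_1\diamond\cdots\diamond C_{k-1}$ has a genuine non-crossing diagram (a disjoint union of polygons, so $C\in\CFn$ by Theorem~\ref{thm:1-1correspondence}) \emph{and} that the pair $(A,C)$ satisfies the (Step 4) condition, whence $A'=A\diamond C$. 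A general principle of the form ``$A,C,AC\in\CFn$ implies $AC=A\diamond C$'' is not available at this point in the paper (something close is only argued later, in Proposition~\ref{prop:if-direction}, using the external LCF machinery), so you cannot appeal to it implicitly; you must either prove it or reproduce the joining-edge analysis. Your reordering argument for the ``moreover'' clause (the final polygon is determined by its vertex set) is acceptable and consistent with the paper, but the missing verification of $A'=A\diamond C$ is the heart of the proof and must be supplied.
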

\begin{figure}[h]
 \centering
\includegraphics[width=10cm]{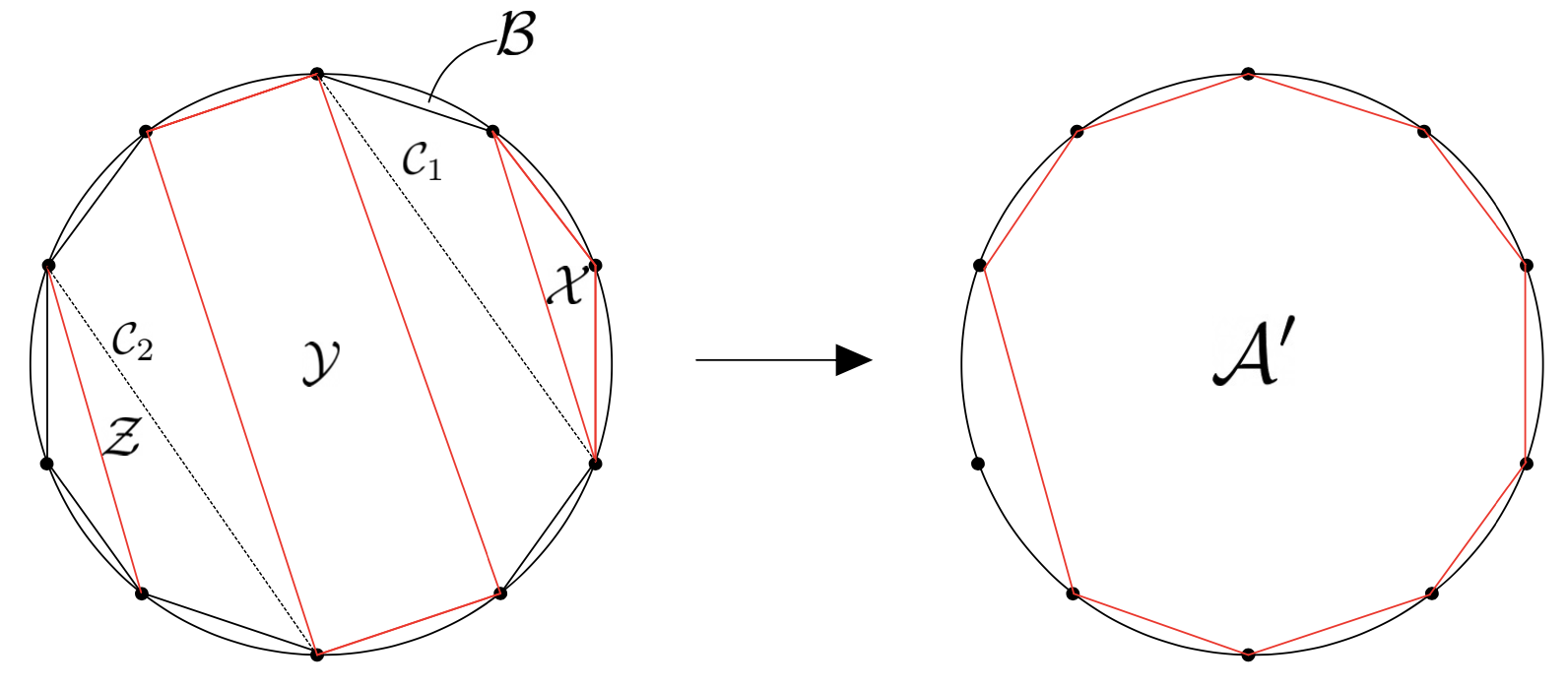}
 \caption{An example of Corollary~\ref{cor:A'}. The left image contains a 10-gon $\B$, a 3-component diagram $\mathcal{A}=\X \sqcup \Y \sqcup \Z$ in red, and $\C= \C_1 \sqcup \C_2$. On the right image, we compute $\A'$, which is a single polygon in red.} 
\label{fig:CorA}
\end{figure}
\begin{figure}[h]
 \centering
\includegraphics[width=10cm]{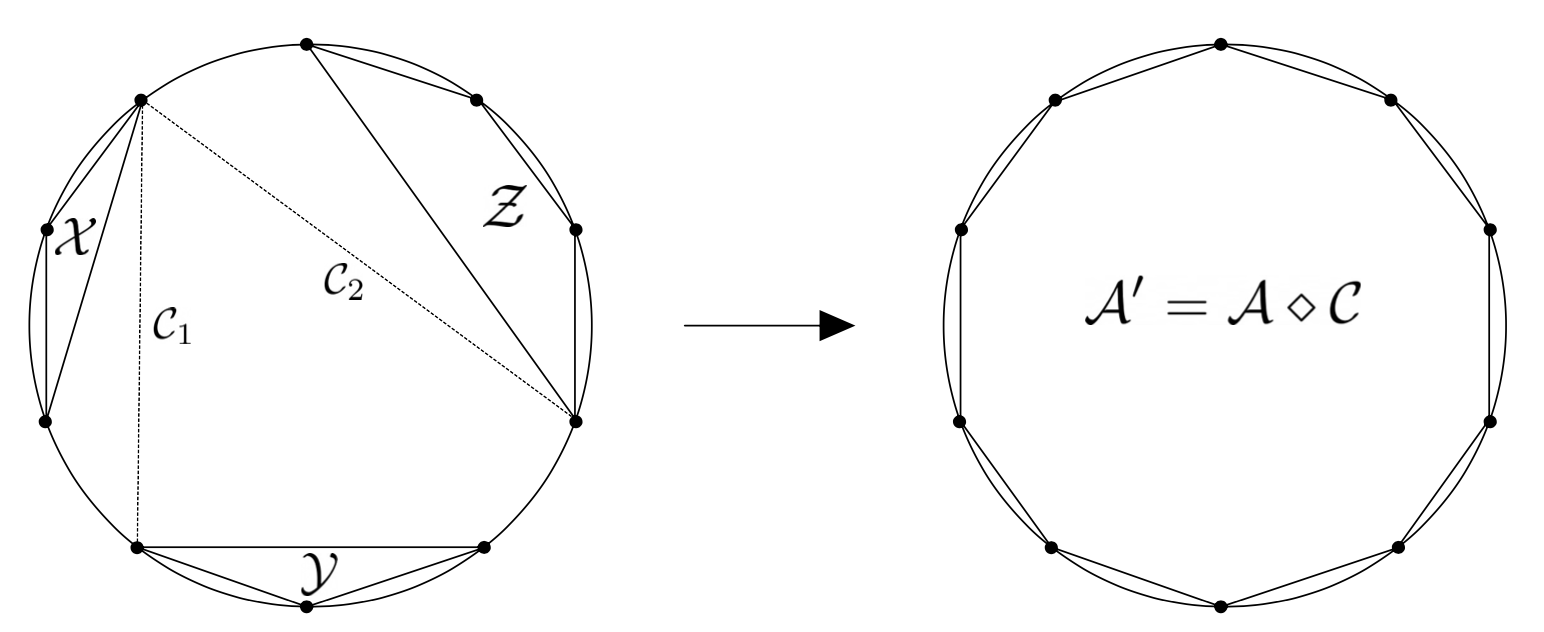}
 \caption{Another example of Corollary~\ref{cor:A'}. The left image contains a 3-component diagram $\A=\X\sqcup\Y\sqcup\Z$. On the right image, we compute $\A'=\A\diamond C$, which is a single polygon.} 
\label{fig:CorA2}
\end{figure}
\begin{proof}
Applying Lemma~\ref{lem:A'} $k-1$ times, we inductively obtain band generators  $C_1, \cdots, C_{k-1} \in \BG(B_n)$ such that 
\begin{eqnarray*}
A'
&:=&
(\cdots((A\diamond C_1) \diamond C_2)\diamond \cdots) \diamond C_{k-1}\\
&=&
A C_1 C_2 \cdots C_{k-1}
\end{eqnarray*}
is a canonical factor, the diagram $\A'$ is connected, and $\ch(A)\subset \ch(A') \subset \ch(B).$ Therefore, it is enough to show the product $C:=C_1\cdots C_{k-1}$ is a canonical factor. 

By the construction in Lemma~\ref{lem:A'} we see
$$\C_1 \subset \ch(A \diamond C_1)\quad \mbox{ and } \quad \ch(A\diamond C_1) \cap \Int(\C_2) = \emptyset.$$ 
Thus interiors of the joining edges $\C_1$ and $\C_2$ are disjoint. 
Likewise for every $i=1, \dots, k-2$ we have 
$$\C_1, \cdots, \C_i \subset \ch(AC_1\cdots C_i)$$ 
and 
$$
\ch(AC_1\cdots C_i) \cap \Int(\C_{i+1})=\emptyset.
$$
This means interiors of the joining edges $\C_1, \dots, \C_k$ are pairwise disjoint. 

If $\C_i$ and $\C_j$ intersect, they only intersect at a single vertex, say $P\in D_n$. 
Moreover, Remark~\ref{remark:joining edge} about joining edges implies that: 
$i<j$ if and only if in small neighborhood of $P$ the edge $\C_j$ lies on the left of $\C_i$. 
In other words, near $P$, joining edges meet in the counterclockwise manner. 
The interior disjointness property and the counterclockwise property guarantee that
$$C= C_1\cdots C_{k-1}= (((C_1 \diamond C_2) \diamond C_3) \diamond \cdots )\diamond C_{k-1}$$ 
and its diagram is disjoint union of polygons, that is, by Theorem~\ref{thm:1-1correspondence}, $C$ is a canonical factor. 
Since the pair $(A, C)$ satisfy the (Step 4) condition of $\diamond$ in Definition~\ref{def:diamond4} we have $A' = A \diamond C$. 
\end{proof}

We note that 
the interior disjointness property and the counterclockwise property have already appeared in the setting of Definition~\ref{def:diamond}.

\begin{lemma}\label{lem:B=AC}
{\rm(See Figure~\ref{fig:LemB=AC})} Let $A, B \in \CFn$ be canonical factors such that 
\begin{itemize}
\item
the diagram $\B$ is connected, 
\item
the diagram $\A$ is connected or empty, and 
\item
$\ch(A) \subset \ch(B).$
\end{itemize} 
Then there exists a canonical factor $C \in \CFn$ such that $$B=A\diamond C=AC.$$ 
\end{lemma}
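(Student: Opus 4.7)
The plan is to construct the canonical factor $C$ explicitly from the vertex data of $\A$ and $\B$, and then verify $A \diamond C = B$ by assembling the factors via iterated Step~1 operations under the Step~4 umbrella of Definition~\ref{def:diamond4}. If $\A = \emptyset$, simply take $C := B$. Henceforth assume $\A$ is a connected polygon, and list $V(B) = \{Q_1, \ldots, Q_m\}$ counterclockwise along $\partial D_n$; write $V(A) = \{Q_{i_1}, \ldots, Q_{i_k}\}$ as the corresponding subsequence (which is contained in $V(B)$ since $\ch(A) \subset \ch(B)$).

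For each cyclic index $j$ (with $Q_{i_{k+1}} := Q_{i_1}$), define the \emph{pocket extras}
\[
E_j := \{Q_{i_j+1},\, Q_{i_j+2},\, \ldots,\, Q_{i_{j+1}-1}\} \;\subset\; V(B) \setminus V(A),
\]
namely the vertices of $V(B)$ strictly between $Q_{i_j}$ and $Q_{i_{j+1}}$ in ccw order. For each $j$ with $E_j \neq \emptyset$, let $C_j$ be the convex polygon on the consecutive vertices $E_j \cup \{Q_{i_{j+1}}\}$. Set $C := \bigsqcup_{j:\, E_j \neq \emptyset} C_j$.

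First check $C \in \CFn$. The vertex sets $V(C_j)$ are pairwise disjoint, because the $E_j$'s partition $V(B)\setminus V(A)$, the endpoints $Q_{i_{j+1}}$ are distinct $A$-vertices across $j$, and no $E_{j'}$ contains any vertex of $V(A)$. Hence the diagrams $\C_j$ are non-crossing and $\C = \bigsqcup_j \C_j$ is a valid non-crossing polygon diagram, so Theorem~\ref{thm:1-1correspondence} yields $C \in \CFn$. Next verify the Step~4 hypothesis for $(A,C)$: each $\C_j$ meets $\A$ at the single vertex $Q_{i_{j+1}}$; sorting the edges incident to $Q_{i_{j+1}}$ by the angular position of their other endpoint, the two $\A$-edges $\overline{Q_{i_{j+1}}Q_{i_{j+2}}}$ and $\overline{Q_{i_{j+1}}Q_{i_j}}$ come first (ccw), followed immediately by the $\C_j$-edges $\overline{Q_{i_{j+1}}Q_{i_j+1}}$ and $\overline{Q_{i_{j+1}}Q_{i_{j+1}-1}}$; this places $\C_j$ on the left of $\A$ and leaves the fan between $\overline{Q_{i_{j+1}}Q_{i_j}}$ and $\overline{Q_{i_{j+1}}Q_{i_j+1}}$ empty, so each pair $(A, C_j)$ satisfies the Step~1 hypothesis of Definition~\ref{def:diamond}.

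Now iterate: each absorption $\diamond\, C_j$ leaves the local picture at every other $Q_{i_{j'+1}}$ untouched (the modification happens only at $Q_{i_{j+1}}$ and annexes precisely the vertices of $E_j$), so the Step~1 check persists at each subsequent join. The final polygon has vertex set $V(A) \cup \bigcup_j E_j = V(B)$ and, being the convex polygon on $V(B)$, coincides with $\B$; Theorem~\ref{thm:1-1correspondence} then gives $A \diamond C = B$ as canonical factors, and Proposition~\ref{prop:list of properties of diamond}(a) yields $AC = B$ as braid elements. The main obstacle is the angular bookkeeping---correctly identifying which edge of $\A$ plays the role of $a_0$ in the paper's ``left of $\A$'' convention, and confirming that the iterated absorptions do not invalidate the hypothesis at the as-yet-unabsorbed $C_{j'}$'s; once these geometric checks are made the proof follows straight from the definitions.
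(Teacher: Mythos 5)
This is correct and essentially the paper's own proof: your pocket polygons $C_j$ are exactly the polygons the paper extracts from the arcs of $\B \setminus (\A \cap \B)$ (each arc's intermediate vertices together with the endpoint $Q_{i_{j+1}}$), absorbed one at a time by Step-1 $\diamond$ operations, with $C=C_1\sqcup\cdots\sqcup C_l$ a canonical factor by disjointness of the pieces. One cosmetic remark: pairwise disjointness of the vertex sets $V(C_j)$ alone does not preclude crossing chords---what you actually use is that each $V(C_j)$ is a contiguous block of $V(B)$ lying strictly between consecutive $A$-vertices (so the blocks are non-interleaved), which your construction does supply.
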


\begin{figure}[h]
 \centering
\includegraphics[width=10cm]{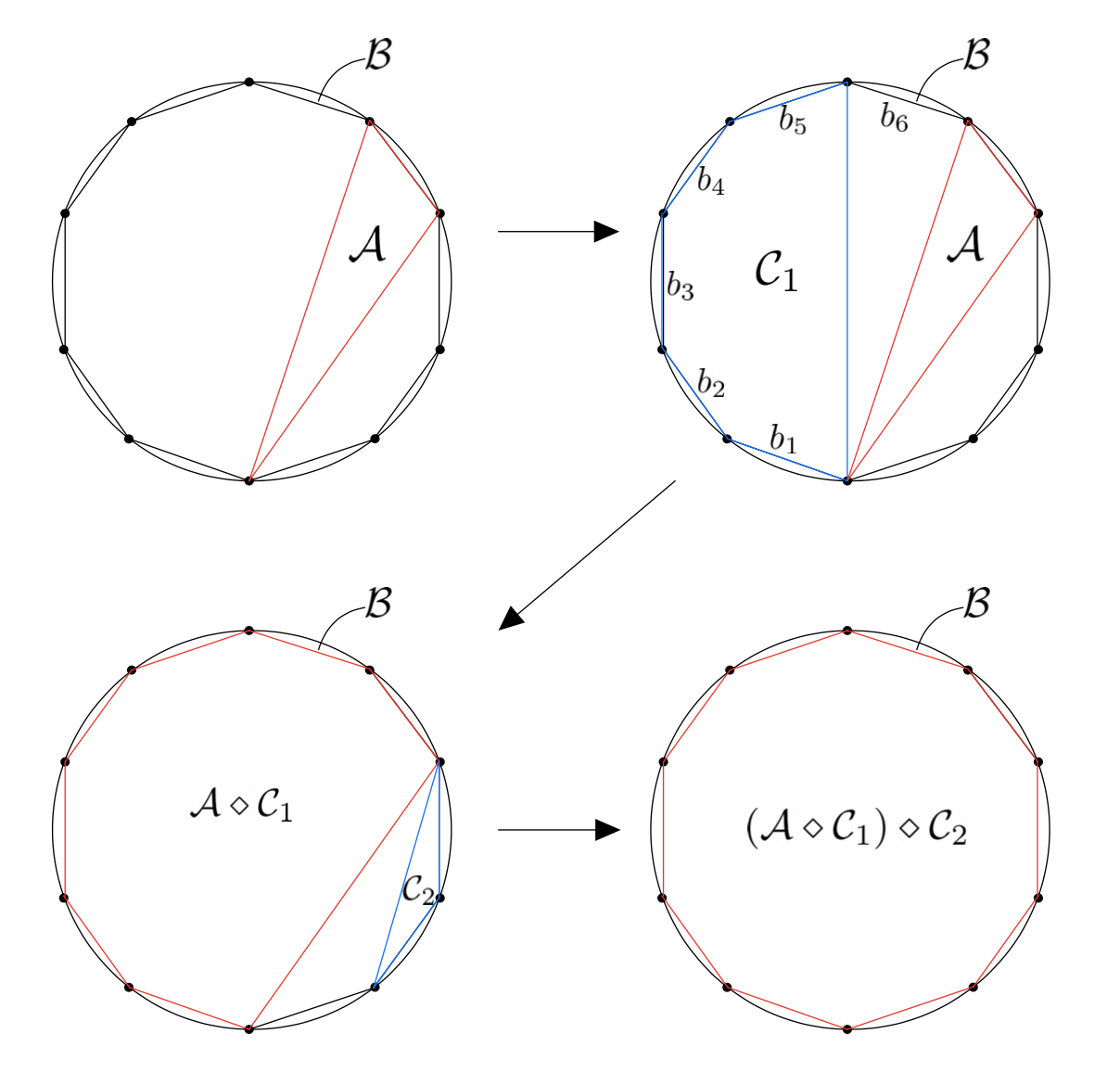}
 \caption{The above figure is an example of Lemma \ref{lem:B=AC}. The top left image contains a 10-gon $\B$ and a 3-gon $\mathcal{A}$ in red. On the top right image, we draw $\C_1$ in blue. On the bottom left image, we compute $\A\diamond\C_1$. We also draw $\C_2$ in blue. On the bottom right, we compute $\B=(\A\diamond \C_1)\diamond \C_2$}. 
\label{fig:LemB=AC}
\end{figure}

\begin{proof}
If $\A$ is empty, then $A=e$ and we set $C:=B$, done. 
In the following we assume $\A$ is non-empty and connected. 

The set $\B \setminus (\A \cap \B)$ consists of disjoint arcs.
Let $k$ denote the number of these disjoint arcs.  
If $k=0$ then there is nothing to argue since $A=B$. 
Assume that $k>0$. 
Pick one of the arcs. Let $m$ be the number of edges to form the arc. 
Orient the polygon $\B$ clockwise. 
Following the orientation, denote the $m$ edges of the arc by $b_1, \dots, b_m$. 
The product $C_1:=b_1\cdots b_{m-1}$ is a canonical factor whose diagram $\C_1$ is an $m$-gon.
We note that the pair $(A, C_1)$ satisfy the (Step 1) condition for the operation $\diamond$.
By Definition ~\ref{def:diamond}, we see that 
\begin{itemize}
\item
the product $AC_1= A \diamond C_1$ is a canonical factor
\item 
its diagram $\A\diamond \C_1$ is connected, 
\item
the number of arc components of $\B\setminus ((\A\diamond\C_1) \cap \B)$ is $k-1$, and  
\item
$\ch(A) \subset \ch(AC_1) \subset \ch(B)$. 
\end{itemize}
Thus, we can repeat the procedure for $AC_1$ and $B$ to obtain a canonical factor $AC_1 C_2$ such that 
$$\ch(AC_1) \subset \ch(AC_1C_2) \subset \ch(B).$$ 

We continue the procedure until we exhaust all the original $k$ arcs of $\B \setminus (\A \cap \B)$. 
At the end, we have canonical factors $C_1, \cdots, C_k \in \CFn$ such that 
$$AC_1\cdots C_k = (((A\diamond C_1)\diamond C_2) \diamond \cdots ) \diamond C_k= B.
$$
Since their diagrams $\C_1, \cdots, \C_k$ are pairwise disjoint, by Theorem~\ref{thm:1-1correspondence}, the product $C:=C_1 \cdots C_k=C_1 \sqcup \cdots \sqcup C_k$ is also a canonical factor and we have $B=AC=A\diamond C$.
\end{proof}

We prove the only-if part of Theorem~\ref{thm:subset}. 
The if part of Theorem~\ref{thm:subset} will be proved in Proposition~\ref{prop:if-direction}. 

\begin{proposition}\label{prop:only-if-direction}
Let $A,B\in\CFn$. If $\ch(A) \subset \ch(B)$ then there exists a $Q\in \CFn$ such that $B=AQ$; thus, $A\prec B$.
\end{proposition}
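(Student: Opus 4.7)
The plan is to bootstrap from the three intermediate results already established in this subsection---Lemma~\ref{lem:A'}, Corollary~\ref{cor:A'}, and Lemma~\ref{lem:B=AC}---all of which assume that the ``larger'' diagram $\B$ is a single polygon. The main task is therefore to reduce the general case, in which $\B$ may have several polygon components, to the connected case by working inside each component separately.

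First I would decompose $\B$ into its connected polygon components $\B = \B_1 \sqcup \cdots \sqcup \B_m$. Distinct polygon components have disjoint convex hulls, so $\ch(B) = \bigsqcup_{i=1}^{m} \ch(B_i)$. Because every connected component of $\A$ is connected and lies inside this disjoint union, it must sit inside exactly one $\ch(B_i)$. Grouping the components of $\A$ accordingly produces a decomposition $A = A^{(1)} \sqcup \cdots \sqcup A^{(m)}$ (some factors possibly trivial) with $\ch(A^{(i)}) \subset \ch(B_i)$ for every $i$.

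For each $i$, if $A^{(i)}$ has two or more connected components I would apply Corollary~\ref{cor:A'} to produce a canonical factor $C_i$ such that $A^{(i)} \diamond C_i$ is a single polygon with $\ch(A^{(i)} \diamond C_i) \subset \ch(B_i)$; if $A^{(i)}$ is empty or already a single polygon, set $C_i = e$. In either case the pair $(A^{(i)} \diamond C_i,\, B_i)$ satisfies the hypotheses of Lemma~\ref{lem:B=AC}, which furnishes a canonical factor $D_i$ with $B_i = (A^{(i)} \diamond C_i) \diamond D_i = A^{(i)} C_i D_i$ as braids. Since the hulls $\ch(B_i)$ are pairwise disjoint, any two band generators supported in different components commute, so after rearranging
\[
B = B_1 B_2 \cdots B_m = \Bigl(\prod_{i=1}^{m} A^{(i)}\Bigr)\Bigl(\prod_{i=1}^{m} C_i D_i\Bigr) = A \cdot Q,
\]
where $Q := C_1 D_1 \cdots C_m D_m$.

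It remains to verify that $Q \in \CFn$. The inequality $e \leq Q$ is immediate since $Q$ is a product of canonical factors, hence of positive bands. For $Q \leq \delta$, the assumption $B \leq \delta$ gives positive words $P, R$ with $\delta = P B R = (PA) Q R$, and $PA$ is still positive; this is exactly the condition $Q \leq \delta$ of Definition~\ref{def:CF}. Thus $Q \in \CFn$ and $A \prec B$. The step I expect to be the main obstacle is the middle one: one must check that the locally constructed factors $C_i$ and $D_i$, living inside different hulls $\ch(B_i)$, genuinely assemble into a single canonical factor $Q$ without any unwanted diagram overlaps. This ultimately reduces to the pairwise disjointness of the hulls $\ch(B_i)$, which is why the decomposition in the first step is the crux of the argument.
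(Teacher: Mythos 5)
Your construction follows the paper's own skeleton almost exactly: decompose along the connected polygon components of $\B$, sort the components of $\A$ into the hulls $\ch(B_i)$, apply Corollary~\ref{cor:A'} to fuse each group into a single polygon, and then Lemma~\ref{lem:B=AC} to fill it out to $B_i$; the regrouping of the product via commutation of factors with disjoint supports is also implicit in the paper's use of $\sqcup$. Where you genuinely diverge is the step you flagged as the obstacle: the paper shows that the leftover factors assemble into a canonical factor by verifying the diagrammatic (Step 4) condition for $\diamond$ --- the $\C_i$'s are pairwise disjoint, the $\D_i$'s meet the rest only at vertices and on the correct (left) side --- and thereby concludes the stronger statement $B=A\diamond Q$. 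You instead get $Q\in\CFn$ purely algebraically from Definition~\ref{def:CF}: $Q$ is a product of positive bands, so $e\leq Q$, and $\delta=PBR=(PA)QR$ with $PA,R$ positive gives $Q\leq\delta$. That shortcut is valid for the proposition as stated and is arguably cleaner, since the hard content is already the explicit positive word $Q$ produced by the two lemmas. What it buys less of is the diagrammatic refinement: the paper's version records that $Q$ sits on the right of $A$ in the sense of $\diamond$, which is exactly what is quoted later in Corollary~\ref{cor:complementary} ($A\diamond B=AB=\delta$); with your argument that extra clause would need a separate (easy, but not free) check.
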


\begin{proof}
Suppose that $\ch(A) \subset \ch(B)$ and the diagram $\B$ has $k$ connected components $\B_1,\dots,\B_k$.
Suppose that $A$ admits a decomposition $A = A_1 \sqcup \cdots \sqcup A_k$ such that 
$\ch(A_i) \subset \ch(B_i)$ or $A_i = e$. 
The diagram $\A_i$ has possibly multiple components.

If $A_i\neq e$ applying Corollary~\ref{cor:A'} to every pair $(A_i, B_i)$, we can find a canonical factor $C_i \in \CFn$ such that 
\begin{itemize}
\item
$A_i':=A_i C_i = A_i \diamond C_i \in \CFn$, 
\item
the new diagram $\A_i'$ is connected and 
\item
$\ch(A_i)\subset \ch(A_i')\subset \ch(B_i)$. 
\item
$\ch(C_i) \subset \ch(A_i') \subset \ch(B_i)$.
\end{itemize}
If $A_i=e$ then we may set $C_i=e$.

Next we apply Lemma~\ref{lem:B=AC} to the pair $(A_i', B_i)$ for each $i=1,\dots,k$. 
Then there exists a canonical factor $D_i \in \CFn$ such that 
$B_i =A_i' \diamond D_i.$

Since components of $\B$ are pairwise disjoint, diagrams $\C_i$ are pairwise disjoint, so the product $C:= C_1 \sqcup \cdots \sqcup C_k$ is a canonical factor. Similarly, the product $D:=D_1 \sqcup \cdots \sqcup D_k$ is a canonical factor. 
With the $C$ and $D$ we have $B=(A\diamond C) \diamond D$. 

It remains to prove that the product $Q:=CD$ satisfies $CD = C \diamond D$;  thus, $Q$ is a canonical factor and $B=A \diamond Q$.

Note that 
\begin{eqnarray*}
&&\ch(C) \subset \ch(A  \diamond C) \subset \ch(B), \\
&&\ch(D) \subset \ch(B), \\
&&\Int(\ch(D)) \cap \ch(A \diamond C)=\emptyset.
\end{eqnarray*} 
This means if a component of $\C$ and a component of $\D$ intersect, then they must intersect at a vertex of $D_n$.
Moreover, near that vertex, the component of $\D$ lies on the left of the component of $\C$. 
Thus the (Step 4) condition of $\diamond$ in Definition~\ref{def:diamond} is satisfied which yields that the product $Q=CD=C\diamond D$. 
\end{proof}

\begin{corollary}\label{cor:complementary} 
For every canonical factor $A\in \CFn$ there exists a $B\in\CFn$ such that $A\diamond B=AB=\delta.$ 
\end{corollary}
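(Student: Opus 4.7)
The plan is to deduce this as an immediate consequence of Proposition~\ref{prop:only-if-direction}. The key observation is that $\delta$ corresponds to the full $n$-gon on the vertices $P_1, \ldots, P_n$ of $D_n$ (this is stated for $B_4$ in the excerpt, where $\delta = a_3 a_2 a_1$ is pictured as the unique square, and the construction generalizes to arbitrary $n$). Thus $\ch(\delta)$ is the convex hull of all punctures, and since every canonical factor $A \in \CFn$ is built from edges of polygons with vertices among $P_1, \ldots, P_n$, we automatically have $\ch(A) \subset \ch(\delta)$.

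Given this inclusion, I would apply Proposition~\ref{prop:only-if-direction} directly with $B$ replaced by $\delta$. That proposition produces a canonical factor $Q \in \CFn$ such that $\delta = A Q$. Renaming $Q$ as $B$ gives the desired factor.

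The only remaining point is to verify that the product in the braid group agrees with the $\diamond$-product, i.e.\ $AB = A \diamond B$. This is not an extra assertion to prove but rather something already built into the proof of Proposition~\ref{prop:only-if-direction}: the factor $Q$ there is constructed as $Q = C D = C \diamond D$ and shown to satisfy $B = (A \diamond C) \diamond D = A \diamond Q$. Specializing that construction to the case $B = \delta$ gives exactly $\delta = A \diamond B$ as braids and as diagrams.

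I do not anticipate any real obstacle, since the corollary is essentially the assertion that $\delta$ is a maximum for $\prec$, and this maximality is forced by $\ch(\delta)$ being the largest possible convex hull in $D_n$. The only thing worth writing carefully is the one-line justification that $\ch(A) \subset \ch(\delta)$ for every canonical factor $A$, after which Proposition~\ref{prop:only-if-direction} does all the work.
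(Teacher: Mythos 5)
Your proposal is correct and follows the same route the paper intends: the corollary is stated as an immediate consequence of Proposition~\ref{prop:only-if-direction}, applied with $B=\delta$, using that $\ch(A)\subset\ch(\delta)$ for every canonical factor since $\ch(\delta)$ is the full $n$-gon together with its interior. Your remark that the equality $AB=A\diamond B$ is already established inside the proof of that proposition (where $Q=C\diamond D$ and $B=A\diamond Q$) is exactly the right justification, so nothing is missing.
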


\section{Partially Ordered Set $(\Wds(\beta), \Rightarrow)$}\label{Sec:OrderedSet}
Given $\beta\in B_n$, we define a set:
\begin{equation*}
  \Wds(\beta) := \left\{ W=\delta^r A_1\cdots A_k\middle\vert \begin{array}{l}
    W\text{ is a word representing }\beta, \\
    r\in\mathbb{Z}, \ A_i\in \CFn
  \end{array}\right\}
\end{equation*}

In this section, 
extending the partial ordering $\prec$ on the set $\CFn$,
we introduce a partial ordering $\Rightarrow$ on the set $\Wds(\beta)$ of braid words representing $\beta\in B_n$. 

We start with defining a relation $\Rightarrow$ between words. 

\begin{definition}\label{def:Rightarrow}
Let $A,B,A',B'\in\CFn$. We denote
    \begin{equation*}
        AB\Rightarrow A'B'
\end{equation*}
if the following are satisfied:
    \begin{enumerate}[a)]
        \item $AB=A'B'$ as braid elements in $B_n$.
        \item $A \prec A'$ (equivalently, $\ch(A)\subset \ch(A')$).
    \end{enumerate}
In other words, the word $A'B'$ is more {\em left-weighted} than $AB$. 
If no $A', B' \in \CFn$ satisfy $AB\Rightarrow A'B'$ then we say that the word $AB$ is {\em maximally left weighted.}
In case of $B'=e$, $B'$  may be omitted.
\end{definition}

Before giving examples of  $AB\Rightarrow A'B'$ we recall the right complementary set $R(A)$ and the starting set $S(B)$. 
The reason we do so here is that: The word $AB$ admits a more left-weighted word $A'B"$ if and only if $R(A)\cap S(B) \neq \emptyset$ (see Remark~\ref{rem:RandS}). 

\begin{definition}
Let $A\in \CFn$. Define the {\em right complementary set} $R(A)$ and the {\em starting set} $S(A)$ as follows:
\begin{align*}
       R(A)&=\{c\in\BG(B_n)\mid Ac\in\CFn\}\\
        S(A)&=\{c\in\BG(B_n)\mid A=cA'\text{ for some }A'\in\CFn \}.
\end{align*}
In terms of the diagram, $R(A)$ is the set of edges $\C$ which are  
\begin{itemize}
\item
disjoint from the diagram $\A$ of $A$; i.e., $\A \cap \C = \emptyset$, or 
\item
satisfying the two conditions in Definition~\ref{def:diamond4}; i.e., 
$\ch(A) \cap \C = \A \cap \C \subset \{\mbox{vertices of } D_n\}$ and in small neighborhood of each $P \in \ch(A) \cap \C$ the edge $\C$ lies on the left of $\A$.
\end{itemize}
It is easy to see that $S(A)$ is the set of all edges of the diagram $\A$.
\end{definition}

\begin{example}\label{ex:RS}
For each canonical factor $A\in \CF$ of $B_4$, the set $R(A)$ is described as follows (up to rotation):
\begin{align*}
    R(\aone)&=\{(\bone),(\athree),(\afour)\}=(\aonehigh)&R(\bone)=\{(\afour),(\atwo)\}=(\bonehigh)\\
    R(\atwoaone)&=\{(\afour)\}=(\atwoaonehigh)&R(\aoneathree)=\{(\bone)\}=(\aoneathreehigh)\\
    R(\delt)&=\{(\e)\}=(\e)
\end{align*}
The thick highlighted edges represent elements of $R(A)$, while the black edges represent $A$. 
See Example~\ref{ex:2} below for explanation of $R($\aoneathree).

The set $S(A)$ is described as follows (up to rotation):
\begin{align*}
    S(\aone)&=\{(\aone)\}&S(\bone)=\{\bone\}\\
    S(\atwoaone)&=\{(\aone),(\atwo),(\bone)\}&S(\aoneathree)=\{(\aone),(\athree)\}\\
    S(\delt)&=\{(\aone),(\atwo),(\athree),(\afour),(\bone),(\btwo)\}
\end{align*}
The last equation comes from (\ref{delta-expression}). 
\end{example}

The maximally left weighted condition (Definition~\ref{def:Rightarrow}) of word $AB$ can be interpreted in terms of the sets $R(A)$ and $S(B)$. 

\begin{remark}\label{rem:RandS}
The algebraic condition $R(A)\cap S(B)=\emptyset$ is equivalent to the pictorial condition that the diagram $B$ does not intersect the highlighted edges of $A$ (cf. Example~\ref{ex:RS}).   
Therefore, 
$R(A)\cap S(B)=\emptyset$ if and only if no factors $A, B \in \CF$ can increase the partial order $AB \Rightarrow A'B'$, equivalently the $AB$ is maximally left weighted. 
\end{remark}

Next, we will show two examples of $AB\Rightarrow A'B'$ using $R(A)$ and $S(B)$. 

\begin{example}\label{ex:1}
 This example will show that 
 \begin{equation*}
     AB:=(\bone)(\atwoafour)\Rightarrow (\afourathree)(\atwo)\Rightarrow (\delt) (\e)= (\delt).
 \end{equation*}
 We will start by highlighting $A$ as follows:
 \begin{equation*}
     (\bonehigh)(\atwoafour).
 \end{equation*}
 Note that $B$ is contained in the highlighted edges of $A$. Move one of these lines from $B$ to $A$ to make it more left weighted. 
 It does not matter which line you choose to move. Then, we obtain
 \begin{equation*}
AB\Rightarrow    A'B':= (\afourathree)(\atwo).
 \end{equation*}
 Next, we repeat this process until $R(A')\cap S(B')=\emptyset$. If we highlight $A'$, we have
 \begin{equation*}
     (\afourathreehigh)(\atwo).
 \end{equation*}
 Note that the edge of $B'$ is contained in the highlighted edge of $A'$. Move this line from $B'$ to $A'$ to make it more left weighted. 
 Then, we obtain
$$A'B'\Rightarrow A''B'':=(\delt)(\e).$$ 
 \end{example}
  
\begin{example}\label{ex:2}
This example will show
\begin{equation*}
    (\aoneathree)(\bone)\Rightarrow(\delt).
\end{equation*}
Note that the left factor (\aoneathree) consists of two lines. 
Whenever the left factor consists of disjoint two lines, we must take an additional step before we highlight it.  First, move one of the lines from (\aoneathree) to (\bone) to decrease the partial ordering and obtain the word (\aone)(\afourathree). We have
\begin{equation*}
AB:=(\aone)(\afourathree) \Rightarrow (\aoneathree)(\bone).
\end{equation*}
Now we may highlight $A$:
\begin{equation*}
    (\aonehigh)(\afourathree).
\end{equation*}
Note that the edges of $B$ are contained in the highlighted edges of $A$. Move one of the edges from $B$ to $A$ to increase the partial ordering. We chose to move the leftmost edge and obtain:
\begin{equation*}
AB\Rightarrow A'B':=    (\aoneafour)(\athree).
\end{equation*}
Keep in mind that triangles are oriented clockwise, so we get $B'=$(\athree). Writing $B'=$(\bone) does not follow the orientation, so this is not allowed. Then, we rehighlight $A'$:
\begin{equation*}
    (\aoneafourhigh)(\athree).
\end{equation*}
Again, we see that the edges of $B'$ are contained in the highlighted edges of $A'$. Move the remaining edge from $B'$ to $A'$ to have more left weighted word $$A'B'\Rightarrow A''B'':=(\delt)(\e).$$ 
This concludes:
    (\aoneathree)(\bone)$\Rightarrow$ (\delt).
\end{example}

Pairs $(A,B)$ of canonical factors of 4-braids with $R(A)\cap S(B)\neq\emptyset$ (resp. $=\emptyset$) are listed in Table~\ref{table:1} (resp. Table~\ref{table:2}) below. 

\begin{example}  
In Table~\ref{table:1}, we have listed all possible pairs $(A_i, A_{i+1})$ of canonical factors up to rotation whose partial ordering can be increased. 
Namely, there are $A_i', A_{i+1}' \in \CF$ such that $A_i A_{i+1} \Rightarrow A_i' A_{i+1}'$. 
\end{example}
\begin{table}[h!]
\begin{center}
\begin{tabular}
{|p{1.3cm}||p{3.1cm}|p{3.5cm}|p{3.6cm}|}
 \hline
 \multicolumn{4}{|c|}{Order Increasable Multiplications  for $\CF$ up to rotation} \\

 \hline
 \diagbox[innerwidth=1.3cm,height=1.5cm]{$A_i$}{$A_{i+1}$}
& \mbox{Single Edge}
& \mbox{Disjoint Edges}
& \mbox{Triangle}
\\
 
\hhline{|=||=|=|=|}

 &(\aone)(\afour)$\Rightarrow$(\aoneafour)&(\bone)(\atwoafour)$\Rightarrow(\delt)^{**}$&(\aone)(\aoneafour)$\Rightarrow$(\aoneafour)(\btwo)\\
 
 &(\aone)(\bone)$\Rightarrow$(\atwoaone)&(\aone)(\aoneathree)$\Rightarrow$(\aoneathree)(\aone)&(\aone)(\atwoaone)$\Rightarrow$(\atwoaone)(\atwo)\\
 
 \centering\mbox{Single} &(\bone)(\atwo)$\Rightarrow$(\atwoaone)&(\aone)(\atwoafour)$\Rightarrow$(\aoneafour)(\atwo)&(\bone)(\atwoaone)$\Rightarrow$(\atwoaone)(\aone)\\

 \centering\mbox{Edge}&(\aone)(\athree)$\Rightarrow$(\aoneathree)&&(\aone)(\afourathree)$\Rightarrow$(\delt)\\
 
 &&&(\bone)(\aoneafour)$\Rightarrow$(\afourathree)(\btwo)\\

 &&&(\aone)(\athreeatwo)$\Rightarrow$(\aoneathree)(\atwo)\\
 
 \hline

 \centering\mbox{Disjoint}&(\aoneathree)(\bone)$\Rightarrow(\delt)^*$&&(\aoneathree)(\atwoaone)$\Rightarrow$(\delt)$(\atwo)^*$\\

\centering\mbox{Edges}&&&\\
 
 \hline
 
 &(\atwoaone)(\afour)$\Rightarrow$(\delt)&(\atwoaone)(\atwoafour)$\Rightarrow$(\delt)(\atwo)&(\atwoaone)(\afourathree)$\Rightarrow$(\delt)(\athree)\\

\mbox{Triangle}&&&(\atwoaone)(\aoneafour)$\Rightarrow$(\delt)(\btwo)\\

 \hline
\end{tabular}
\caption{Order increasable multiplications for $\CF$ up to rotation.\\
* see Example 2.16.\\
** see Example 2.15.}
\label{table:1}
\end{center}
\end{table} 

\begin{example}\label{ex:tau}
In addition to the list in Table~\ref{table:1}, if $A_i \in \CF$ and $A_{i+1}=(\delt)$, then $$A_i(\delt) \Rightarrow (\delt)\enspace\tau(A_i)$$  where $\tau:B_4\rightarrow B_4$ is the inner automorphism defined by 
$\tau(a_i)=\delta^{-1} a_i \delta = a_{i+1} $ and $\tau(b_i)=\delta^{-1} b_i \delta= b_{i+1}$. Thus, $\tau$ 
rotates a canonical factor diagram $90^\circ$ counterclockwise.
\end{example}

\begin{example}
In Table 2, we list all possible pairs $(A_i,A_{i+1})$ up to rotation whose partial order cannot be increased. Equivalently, $R(A_i)\cap S(A_{i+1})=\emptyset$. 
We omitted pairs of $(A_i,A_{i+1})$ where $A_i=A_{i+1}$. 
\begin{table}[h!]
\begin{center}
\begin{tabular}{|p{1.3cm}||p{2.5cm}|p{2.5cm}|p{2.5cm}|}
 \hline
 \multicolumn{4}{|c|}{Order Non-Increasing Multiplications for $B_4$ up to rotation} \\

 \hline
 \diagbox[innerwidth=1.3cm,width=1.6cm,height=1.3cm]{$A_i$}{$A_{i+1}$}
& \mbox{Single Edge}
& \mbox{Disjoint Edges}
& \mbox{Triangle}
\\

\hhline{|=||=|=|=|}

 &(\aone)(\atwo)&(\bone)(\aoneathree)&\\
 
 \centering\mbox{Single} &(\aone)(\btwo)&&\\
 
 \centering\mbox{Edge}&(\bone)(\aone)&&\\

 &(\bone)(\btwo)&&\\

 \hline

 \centering\mbox{Disjoint}&(\aoneathree)(\afour)&(\aoneathree)(\atwoafour)&(\aoneathree)(\aoneafour)\\

\centering\mbox{Edges}&(\aoneathree)(\btwo)&&\\

&(\aoneathree)(\aone)&&\\
 
 \hline
 
 &(\atwoaone)(\atwo)&(\atwoaone)(\aoneathree)&(\atwoaone)(\athreeatwo)\\

&(\atwoaone)(\aone)&&\\

 \centering\mbox{Triangle}&(\atwoaone)(\athree)&&\\

 &(\atwoaone)(\bone)&&\\

 &(\atwoaone)(\btwo)&&\\

 \hline

\end{tabular}
\caption{Maximally left weighted words $A_iA_{i+1}$ up to rotation. The cases where $A_i=A_{i+1}$ or $A_i=\delta$ are omitted.}
\label{table:2}
\end{center}
\end{table}

\end{example}

Finally, we extend the relation $\Rightarrow$ (Definition~\ref{def:Rightarrow}) to the set $\Wds(\beta)$. 

\begin{definition}
   For braid words $\delta^\ell A_1\cdots A_k\text{ and } \delta^mA_1'\cdots A_n'$
    in $\text{Wds}(\beta)$, we define
    \begin{equation*}
        \delta^\ell A_1\cdots  A_k\Rightarrow \delta^mA_1'\cdots A_n'
    \end{equation*}
    if one of the following is true: 
    \begin{enumerate}[a)]
        \item $\ell<m$
        \item $\ell=m$, $k=n+1$, and there exists some $i\in\{1,\dots,k\}$ such that $A_i=(\e)$, $A_j=A'_j$ for all $j<i$ and $A_j=A'_{j-1}$ for all $j>i$. 
        \item $\ell=m$, $k=n$, and there exists some $i\in\{1,\dots,k-1\}$ such that $A_j=A_j'$ for all $j\neq i,i+1$ and $A_iA_{i+1}\Rightarrow A_i'A_{i+1}'$.
    \end{enumerate}
With the relation, $(\Wds(\beta),\Rightarrow)$ becomes a partially ordered set.
\end{definition}

 \section{The Left Canonical Form}\label{sec:LCF-def} 

\subsection{Definition of the Left Canonical Form} 
In this section we introduce the left canonical form $\LCF(\beta)$ of a braid $\beta\in B_n$ as the maximal element in the partially ordered set $(\Wds(\beta), \Rightarrow)$. 

The word problem can be solved using the left canonical form by Xu for 3-braids \cite{Xu}, Kang, Ko and Lee for 4-braids \cite{KangKoLee}, and Birman, Ko, Lee for general n-braids \cite{Birman}. Namely, $\beta=\beta'$ if and only if $\LCF(\beta)=\LCF(\beta')$.

We first introduce $\inf(\beta), \sup(\beta), \ell(\beta)$ since 
the left canonical form $\LCF(\beta)$ will be defined to be the unique element of $\Wds(\beta)$ which 
\begin{enumerate}
    \item achieves the $\inf(\beta)$ and $\sup(\beta)$ simultaneously, and 
    \item is maximal in the partially ordered set $(\Wds(\beta), \Rightarrow)$.
\end{enumerate}

\begin{definition}\label{def:inf-sup}
For $\beta\in B_n$, define
\begin{align*}
    \inf (\beta)&:=\max\{r\in\mathbb{Z}\mid\delta^r\leq \beta\},\\
    \sup(\beta)&:=\min\{s\in\mathbb{Z}\mid \beta\leq\delta^s\},\\
    \ell(\beta)&:=\sup(\beta)-\inf(\beta).
\end{align*}
Here, $\ell(\beta)$ is called the \textit{canonical length} which is different from  $||\beta||$, the usual minimal word length in band generators. For example, $\ell(\atwoaone)=1$ and $||(\atwoaone)||=||a_2 a_1||=2.$
\end{definition}

Using these definitions, we can introduce the left canonical form.

\begin{theorem}\label{thm:LCF}
\cite{Xu, KangKoLee, Birman}
For any $n$-braid $\beta$, there exist unique $r\in\mathbb Z$, unique $k\in \mathbb Z_{\geq 0}$ and unique canonical factors $A_1,\dots,A_k\in\CFn \setminus \{e, \delta\}$ such that 
\begin{itemize}
\item
    $\beta=\delta^rA_1 A_2\cdots A_k$ as braid elements in $B_n$ and
\item
any consecutive pairs 
$A_1 A_2, \dots, A_{k-1}A_k$ are maximally left weighted. 
\end{itemize}
Moreover $r$ and $k$ satisfy 
$$\inf(\beta)=r, \quad \sup(\beta)=r+k, \ \mbox{ and } \ \ell(\beta)=k.$$
The unique factorization $\delta^rA_1 A_2\cdots A_k$ of $\beta$ is called the {\em left canonical form} of $\beta$ and denoted by $\LCF(\beta)$. 
\end{theorem}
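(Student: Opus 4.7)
The plan is to show that $\LCF(\beta)$ exists as the unique maximal element of the partially ordered set $(\Wds(\beta), \Rightarrow)$, and then to read off the identifications $r = \inf(\beta)$ and $r+k = \sup(\beta)$ directly. First I would verify $\Wds(\beta) \neq \emptyset$: given any expression of $\beta$ in the generators $a_{ij}^{\pm 1}$, Corollary~\ref{cor:complementary} provides, for each $a_{ij}$, a canonical factor $B_{ij}$ with $a_{ij} B_{ij} = \delta$; substituting $a_{ij}^{-1} = B_{ij}\delta^{-1}$ and using $\delta^{-1} a_{k\ell} \delta \in \BG(B_n)$ to commute every $\delta^{-1}$ to the left yields $\beta = \delta^{-N} a_{i_1 j_1} \cdots a_{i_m j_m}$, and each individual band generator is itself a canonical factor.

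Next I would show that $(\Wds(\beta), \Rightarrow)$ is well-founded, so that maximal elements exist. Along any ascending chain, the exponent $\ell$ is bounded above by $\inf(\beta)$ and therefore stabilizes; after that, type-(b) moves strictly decrease the nonnegative integer $k$, so $k$ stabilizes too; from that point on only type-(c) moves occur, and each strictly increases the tuple $(A_1,\dots,A_k)$ in the lexicographic order induced by the finite poset $(\CFn,\prec)$, forcing termination. Uniqueness of the maximum will follow from Newman's lemma once local confluence is verified: if $W \Rightarrow W_1$ and $W \Rightarrow W_2$ via distinct single moves, there is a common descendant. Non-overlapping single moves and moves of different types commute routinely, and the delicate case is two type-(c) moves acting on an overlapping triple $A_{i-1} A_i A_{i+1}$. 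Here I would exploit the lattice structure on $(\CFn, \prec)$ supplied by the diamond operation of \S\ref{Sec:3} together with the convex-hull characterization of Theorem~\ref{thm:subset} to produce the required join as the common descendant.

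The unique maximum automatically satisfies $A_i \in \CFn \setminus \{e, \delta\}$: any $A_i = e$ admits a type-(b) move that strictly increases the element, and any $A_i = \delta$ can be propagated leftward via the type-(c) relation $A_{i-1}\delta \Rightarrow \delta\,\tau(A_{i-1})$ of Example~\ref{ex:tau} until it merges into the $\delta^r$ prefix through a type-(a) step. Declare this unique maximum to be $\LCF(\beta) = \delta^r A_1 \cdots A_k$; the conditions $A_i \neq e, \delta$ and the maximal left-weighting of consecutive pairs then hold by construction.

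Finally, to identify $r$ and $r+k$: since $e \leq A_i \leq \delta$ we obtain $\delta^r \leq \beta \leq \delta^{r+k}$ immediately, giving $\inf(\beta) \geq r$ and $\sup(\beta) \leq r+k$. If $\delta^{r+1} \leq \beta$ then $\delta \leq A_1 A_2 \cdots A_k$ in $(B_n^+, \leq)$; but in a left-weighted decomposition of a positive braid, the first factor equals the BKL-head, so this forces $A_1 = \delta$, contradicting the LCF format. A symmetric argument using right complements, or the fact that the canonical length is a conjugation invariant of $\beta$, gives $\sup(\beta) = r+k$. The main obstacle is the local confluence of overlapping type-(c) moves; verifying that the lattice join in $(\CFn,\prec)$ supplies the common descendant while simultaneously respecting the left-weighting against the outer factors $A_{i-1}$ and $A_{i+1}$ is the technical heart of the proof.
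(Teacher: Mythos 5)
You are attempting to re-derive a theorem that the paper itself does not prove: Theorem~\ref{thm:LCF} is imported from Xu, Kang--Ko--Lee, and Birman--Ko--Lee, and the paper only builds diagrammatic language around it. So the real question is whether your sketch closes the hard steps of the BKL normal form theorem, and it does not. The two places you flag as delicate are exactly where the content lies, and in both places you appeal to facts that neither you nor the paper establish. First, local confluence of two overlapping left-weighting moves on a triple $A_{i-1}A_iA_{i+1}$ is made to rest on ``the lattice structure on $(\CFn,\prec)$ supplied by the diamond operation.'' But \S\ref{Sec:3} only gives a partial order (Theorem~\ref{thm:subset}) and a product $\diamond$ defined under restrictive positional hypotheses; it gives no joins or meets, and Corollary~\ref{cor:complementary} only supplies right complements to $\delta$. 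The existence of least common multiples of canonical factors, together with left and right cancellativity of the band-generator monoid, is precisely the Garside-theoretic input that \cite{Birman} (and \cite{KangKoLee} for $n=4$) prove by a substantial combinatorial analysis; asserting it as available is the gap, not a routine verification.

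Second, your identification $\inf(\beta)=r$ is circular as written: to rule out $\delta^{r+1}\leq\beta$ you cancel $\delta^r$ (which already needs cancellativity of the monoid, again unproved here) and then invoke ``in a left-weighted decomposition of a positive braid, the first factor equals the BKL-head.'' That statement is essentially the uniqueness/normal-form assertion you are trying to prove, so it cannot be quoted at this point; in the standard development it is deduced \emph{after} the lattice and cancellation properties are in hand. There are also smaller issues (e.g.\ $\prec$ as defined is not strict, so your lexicographic termination argument needs proper containment, and the well-definedness of $\inf(\beta)$ as a finite maximum itself uses the monoid structure), but the essential missing idea is the Garside structure of the BKL monoid --- cancellativity plus lcm/gcd of canonical factors --- without which neither the confluence step nor the computation of $\inf$ and $\sup$ goes through. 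A correct self-contained proof would have to reproduce that part of \cite{Birman}, or else the theorem should simply be cited, as the paper does.
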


\begin{remark} 
The left canonical form is {\em not} a conjugacy invariant. See Example \ref{ex_7_2}.
\end{remark}

We end the section by proving the ``if part'' of Theorem~\ref{thm:subset}. 

\begin{proposition}\label{prop:if-direction}
Let $A, B \in \CFn$. 
If there exists $Q \in \CFn$ such that $AQ=B$ then $\ch(A) \subset \ch(B)$. 
\end{proposition}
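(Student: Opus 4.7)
The plan is to induct on $|Q|$, the word length of $Q$ as a positive word in band generators. For the base case $|Q|=0$ we have $Q=e$, so $B=A$ and $\ch(A)=\ch(B)$ trivially.

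For the inductive step, suppose $Q \neq e$. Since $Q \in \CFn$ has a non-empty diagram, its starting set $S(Q)$ is non-empty; I pick $c \in S(Q)$ and write $Q = cQ'$ with $Q' \in \CFn$ and $|Q'|=|Q|-1$. Set $A':=Ac$, so that $B = AQ = A'Q'$. Since $Q'$ is a positive word, $A' \leq B \leq \delta$, and clearly $A' \geq e$, so $A' \in \CFn$; equivalently, $c \in R(A)$.

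By the diagrammatic description of $R(A)$ recorded earlier in \S\ref{Sec:OrderedSet}, the pair $(A,c)$ either has disjoint diagrams or satisfies the two conditions of Definition~\ref{def:diamond4}; in either case, the product in the braid group equals the $\diamond$-product, $Ac = A \diamond c$. Property (c) of Proposition~\ref{prop:list of properties of diamond} then gives $\ch(A) \subset \ch(A \diamond c) = \ch(A')$. Applying the inductive hypothesis to the pair $(A',B)$ with $B = A'Q'$ and $|Q'|<|Q|$ yields $\ch(A') \subset \ch(B)$. Chaining the two inclusions completes the argument.

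The principal subtlety is the reduction step $A' = Ac \in \CFn$, which rests on the (standard Garside-type) subword property of canonical factors: if $AQ$ is a canonical factor and $c$ is a band generator starting some positive expression of $Q$, then the prefix $Ac$ is again a canonical factor. Once this prefix property is in hand, the rest of the proof is formal, using only the already-established correspondence between the algebraic condition $c \in R(A)$, the geometric configuration of Definition~\ref{def:diamond4}, and the hull-inclusion property (c) of $\diamond$.
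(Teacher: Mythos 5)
Your proposal is correct, but it takes a genuinely different route from the paper. The paper proves Proposition~\ref{prop:if-direction} by a case split on the pair $(A,Q)$ as a whole: if $(A,Q)$ satisfies the (Step 4) conditions of Definition~\ref{def:diamond4}, then $AQ=A\diamond Q$ and property (c) gives $\ch(A)\subset\ch(B)$; if not, it argues that $\LCF(AQ)$ would have canonical length $2$, contradicting $\ell(B)=1$, which invokes the uniqueness statement of Theorem~\ref{thm:LCF}. You instead induct on $||Q||$, peeling off one band generator $c$ at a time; your prefix step $A'=Ac\in\CFn$ is indeed immediate from Definition~\ref{def:CF} (so $c\in R(A)$), and then you only need the single-generator case of the geometric compatibility, namely the diagrammatic description of $R(A)$ stated in \S\ref{Sec:OrderedSet}, to get $Ac=A\diamond c$ and hence $\ch(A)\subset\ch(A')$ by Proposition~\ref{prop:list of properties of diamond}(c), after which induction finishes. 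What your approach buys: it bypasses the left canonical form machinery entirely and only uses the atom-by-atom statement that the paper records explicitly. The one caveat: the direction of that description you rely on (``$Ac\in\CFn$ implies the edge $\C$ meets $\A$ only at vertices and lies to the left of $\A$ there'') is asserted in the paper without proof, so your argument is only as self-contained as that assertion; it can be justified independently of Theorem~\ref{thm:subset} (for instance via Theorem~\ref{thm:1-1correspondence} and the Birman--Ko--Lee analysis of simple elements), so there is no circularity, but you should flag that this is where the real content sits --- the paper's own proof outsources the analogous content to the uniqueness of the left canonical form instead.
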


\begin{proof}
Assume that canonical factors $A, B, Q$ satisfy $AQ=B$. 
If the pair $(A, Q)$ satisfy all the conditions of $\diamond$ in Definition~\ref{def:diamond4}, so that $A Q = A\diamond Q$, then by the property (c) above Definition~\ref{def:diamond3}, it follows that $\ch(A) \subset \ch(B)$. 

If $(A, Q)$ does not satisfy the conditions of $\diamond$ in Definition~\ref{def:diamond4}, then $\LCF(AQ)$ has canonical length $\ell(AQ)=2$. However since $\LCF(AQ)=\LCF(B)=B$, the canonical length $\ell(AQ)=\ell(B)=1$, which is a contradiction.   
\end{proof}

\subsection{Left Canonical Form Algorithm}\label{sec:LCF}
In this section, we will review Kang, Ko, and Lee's algorithm for left canonical form \cite{KangKoLee} for 4-braids. It can be extended to general $n$-braids \cite{Birman}. 
This is a polynomial time algorithm which gives a unique left canonical form for a braid word $\beta$.

Let $\beta\in B_4$ be a braid element. Assume that $\beta$ is represented by a positive braid word in band generators;  
$\beta=A_1 A_2 \cdots A_k \in B_4^+$. Note that $A_i \in \BG(B_4) \subset \CF$ so $A_i \in \CF$. 
Change the composition of $\beta$ by the following rules.
\begin{enumerate}[\alph*)]
    \item If $A_i=(\e)$ for some $i$, then remove $A_i$ from the word $A_1\cdots A_k$. 
%
    \item Replace $A_i A_{i+1}$ with $A'_i A'_{i+1}$ so that $A_i A_{i+1} \Rightarrow A'_i A'_{i+1}$ (as listed in Table 1). 
\end{enumerate}
Apply (a) and (b) until the partial order cannot be raised anymore. The resulting word is the left canonical form, $\LCF(\beta)$, as defined in Theorem~\ref{thm:LCF}. We remark that the resulting word $\LCF(\beta)$ does not depend on the way we apply (a) and (b). One may start with increasing the partial order of any pair $(A_i,A_{i+1})$ in the braid word. 

If $\beta$ is not positive ($\beta\notin B_4^+$), we get rid of the negative exponent terms. 
Suppose $\beta=A_1^{\epsilon_1}A_2^{\epsilon^2}\cdots A_k^{\epsilon_k}$, where $A_i \in \CF$ and $\epsilon_i=\pm 1$. 
Suppose that $\epsilon_i=-1$ for some $i$. Then, replace $A_i^{-1}$ with $A_i'\delta^{-1}$ for some canonical factor $A_i' \in \CF$.
Up to rotation, we have:
\begin{align*}
    (\aone)^{-1}=(\afourathree)\delta^{-1}\\
    (\bone)^{-1}=(\atwoafour)\delta^{-1}
\end{align*}
Applying the rule $A_i'\delta^{-1}=\delta^{-1}\tau^{-1}(A_i')$ a number of times 
will shift $\delta^{-1}$ to the beginning of the word. 
Repeat this procedure until we exhaust all the factors with negative exponents. It will give us a word $\delta^{-r}P$ for some $r \in \mathbb N$ and positive word $P \in B_4^+$ in band generators. Finally we  apply the above algorithm to the positive word $P$ to obtain $\LCF(\beta$).

\begin{example}
    We will use the Diagrammatic Left Canonical Form Algorithm to find $\LCF(\beta)$, where
    \begin{align*}
        \beta&=b_2a_1b_1a_4a_2\\
        &=(\btwo)(\aone)(\bone)(\afour)(\atwo)\\
        &=A_1A_2A_3A_4A_5
    \end{align*}
    in band generators. We will look at each pair of $(A_i,A_{i+1})$ to determine if we can increase the partial order using part (b) of the algorithm.
    \begin{align*}
        \beta&=(\btwo)(\aone)(\bone)(\afourhigh)(\atwo)&\text{Highlight $A_4$}\\
        &=(\btwo)(\aone)(\bone)(\atwoafour)&\text{Increase partial order}\\
        &=(\btwo)(\aone)(\delt)&\text{Increase partial order (See Ex. 2.16)}\\
        &=(\delt)(\bone)(\atwo)&\text{Apply $ A_i(\delt)\Rightarrow(\delt)\tau(A_i)$}\\
        &=(\delt)(\bonehigh)(\atwo)&\text{Highlight $A_2'$}\\
        &=(\delt)(\atwoaone)&\text{Increase partial order}
    \end{align*}
    Thus, 
\begin{equation*}
    \LCF(\beta)=(\delt)(\atwoaone).
\end{equation*}
Note that it does not matter which pair $(A_i,A_{i+1})$ is considered first. 
\end{example}

\begin{example}\label{ex_7_2}
    Consider the knot $7_2$ represented by the 4-braid 
    \begin{equation*}
        \beta=a_1a_1a_1a_2a_1^{-1}a_2a_3a_2^{-1}a_3.
    \end{equation*}
    Using the LCF Algorithm, we find 
    \begin{equation*}
    \LCF(\beta)=\delta^{-1}a_4a_4a_4a_1b_2a_2a_3a_3.
    \end{equation*}
The knot $7_2$ is strongly quasi-positive, as it can be represented by another braid $\beta'=a_1a_1b_2b_1a_3$ which is strongly quasi-positive.
We can observe $\LCF(\beta')=\beta'$. For this example, $\beta$ and $\beta'$ are conjugate but $\LCF(\beta)\neq \LCF(\beta')$.
\end{example}

\subsection{Super Summit Set and The Left Canonical Form}
In this section, we review the definition for the super summit set, which is a conjugacy class invariant for a braid. We also give important properties of the super summit set in Proposition~\ref{prop:sss} and Theorem~\ref{thm:sss}. The conjugacy problem has been solved using band generators for $B_3$ by Xu \cite{Xu}, for $B_4$ by Kang, Ko, and Lee \cite{KangKoLee}, and for $B_n$ by Birman, Ko, and Lee \cite{Birman}.
\begin{definition}\label{def:inf[beta]}
For the conjugacy class $[\beta]$ of $\beta \in B_n$ we define: 
\begin{eqnarray*}
\inf[\beta]&:=& \max\{ \inf(\beta') \mid \beta' \mbox{ is conjugate to } \beta\} \\
\sup[\beta] &:=& \min\{ \sup(\beta') \mid \beta' \mbox{ is conjugate to } \beta\}
\end{eqnarray*}
\end{definition}

\begin{definition}\label{def:SSS}
The \textit{super summit set} of a braid $\beta$, denoted $\SSS(\beta)$, is the set of conjugates $\beta'$ of $\beta$ such that the canonical length
$\ell(\beta')$ is minimal among all the conjugates of $\beta$. 
\end{definition} 

In \cite[Corollary 4.6]{KangKoLee} it is proved that the $\inf[\beta]$ and $\sup[\beta]$ can be achieved simultaneously by the same element. Since the canonical length $\ell(\beta)=\sup(\beta)-\inf(\beta)$ we have the following: 

\begin{proposition}\label{prop:sss}
Any super summit element $\beta' \in \SSS(\beta)$ realizes $\inf[\beta]$ and $\sup[\beta]$. Namely $\inf[\beta]=\inf(\beta')$ and $\sup[\beta]=\sup(\beta')$. 
\end{proposition}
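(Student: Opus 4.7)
The plan is to leverage the cited result \cite[Corollary 4.6]{KangKoLee}, which gives a single conjugate of $\beta$ that simultaneously achieves both $\inf[\beta]$ and $\sup[\beta]$. From this, I will first compute the minimum canonical length in the conjugacy class, and then show that any element attaining this minimum is forced to realize both extrema.

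First I would invoke \cite[Corollary 4.6]{KangKoLee} to produce a conjugate $\gamma$ of $\beta$ with $\inf(\gamma)=\inf[\beta]$ and $\sup(\gamma)=\sup[\beta]$. Thus
\[
\ell(\gamma)=\sup(\gamma)-\inf(\gamma)=\sup[\beta]-\inf[\beta].
\]
Next I would observe that for \emph{every} conjugate $\beta''$ of $\beta$, the definitions of $\inf[\beta]$ and $\sup[\beta]$ yield $\inf(\beta'')\le\inf[\beta]$ and $\sup(\beta'')\ge\sup[\beta]$, so
\[
\ell(\beta'')=\sup(\beta'')-\inf(\beta'')\ \ge\ \sup[\beta]-\inf[\beta]=\ell(\gamma).
\]
This shows that the minimal canonical length over the conjugacy class equals $\sup[\beta]-\inf[\beta]$, and $\gamma$ realizes this minimum, so $\gamma\in\SSS(\beta)$ and in particular $\SSS(\beta)\neq\emptyset$.

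Now let $\beta'\in\SSS(\beta)$ be arbitrary. By the previous step, $\ell(\beta')=\sup[\beta]-\inf[\beta]$, while by the definitions of $\inf[\beta]$ and $\sup[\beta]$ we have $\inf(\beta')\le\inf[\beta]$ and $\sup(\beta')\ge\sup[\beta]$. Subtracting,
\[
\sup(\beta')-\sup[\beta]\ =\ \inf(\beta')-\inf[\beta],
\]
where the left-hand side is nonnegative and the right-hand side is nonpositive. Both sides must therefore vanish, giving $\inf(\beta')=\inf[\beta]$ and $\sup(\beta')=\sup[\beta]$, as required.

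The main (and essentially the only) obstacle here is the simultaneous realization of $\inf[\beta]$ and $\sup[\beta]$ by a single conjugate; this is exactly the nontrivial content of \cite[Corollary 4.6]{KangKoLee} and is being taken as a black box. Once that is granted, the rest of the argument is just the pigeonhole-style comparison above using the identity $\ell=\sup-\inf$.
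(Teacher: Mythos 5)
Your proposal is correct and is essentially the paper's own argument: the paper derives this proposition directly from \cite[Corollary 4.6]{KangKoLee} (simultaneous realization of $\inf[\beta]$ and $\sup[\beta]$ by one conjugate) together with the identity $\ell(\beta)=\sup(\beta)-\inf(\beta)$, which is exactly the comparison you spell out. Your write-up simply makes explicit the pigeonhole step that the paper leaves implicit.
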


An important fact proved by Elrifai and Morton is that: 
two braids are conjugate if and only if their super summit sets are identical if and only if their super summit sets intersect \cite{ElrifaiMorton,Garside}. 

Let $W:=\LCF(\beta)=\delta^rA_1A_2\cdots  A_k$ be the left canonical form of $\beta\in B_n$. 
Let $\tau$ be the inner automorphism of $B_n$ defined by 
$$\tau(\beta)=\delta^{-1} \beta \delta.$$
In terms of the non-crossing partition diagram, $\tau$ rotates the canonical factor diagram $2\pi/n$ counterclockwise. 
Define the {\em cycling} $c(W)$ and the {\em decycling} $d(W)$ of the word $W$ as follows:
\begin{align*}
    c(W)&:=\delta^rA_2\cdots A_k\tau^{-r}(A_1)\\
    d(W)&:=\delta^r\tau^r(A_k)A_1\cdots A_{k-1}
\end{align*}

In general, $\beta\in B_n$ is not necessarily an element of $\SSS(\beta)$. 
The next theorem characterizes elements of $\SSS(\beta)$. 
\begin{theorem}[Kang, Ko, Lee \cite{KangKoLee} and Birman, Ko, Lee \cite{Birman}]\label{thm:sss}
Let $\beta\in B_n$ be an $n$-braid with canonical length $\ell(\beta)\geq 3$. 
Let $W=\LCF(\beta)$. If $\ell(W)=\ell(c(W))=\ell(d(W))$, then $\beta$ is an element of the super summit set {\rm SSS}$(\beta)$.
\end{theorem}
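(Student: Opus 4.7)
The plan is to prove the contrapositive: assuming $\ell(c(W)) = \ell(W) = \ell(d(W))$, I would show that $\ell(\beta)$ equals the conjugacy invariant $\ell[\beta] := \sup[\beta] - \inf[\beta]$, which puts $\beta$ in $\SSS(\beta)$. Write $W = \delta^r A_1 \cdots A_k$. First I would record the elementary monotonicity $\ell(c(W)) \leq \ell(W)$: the word $\delta^r A_2 \cdots A_k \tau^{-r}(A_1)$ is already $\delta^r$ times a product of $k$ canonical factors, so the LCF algorithm can only fuse factors and decrease the count. In the equality case $\ell(c(W)) = \ell(W)$, no fusion happens and $c(W)$ is itself in LCF; equivalently, the single new consecutive pair $A_k\,\tau^{-r}(A_1)$ is maximally left-weighted, and $\inf(c(W)) = r = \inf(W)$. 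The dual statement for $d$ gives that $\ell(d(W)) = \ell(W)$ makes the pair $\tau^r(A_k)\,A_1$ maximally left-weighted and preserves $\sup$.

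The crux is upgrading this single-step stability into stability under iteration: assuming $A_k\,\tau^{-r}(A_1)$ is maximally left-weighted, I would show by induction on $N$ that $c^N(W)$ is already in LCF, hence $\inf(c^N(W)) = r$ for all $N \geq 0$. When $c$ is applied once more to $c^N(W) = \LCF(c^N(W))$, the consecutive pairs that appear are of two types: those inherited from $c^N(W)$ (already maximally left-weighted by induction), and one new pair of the form $\tau^{-r}(A_i A_{i+1})$ arising from the cyclic shift. Here I would use that the inner automorphism $\tau(\cdot) = \delta^{-1} \cdot \delta$ is, diagrammatically, the rotation of $D_n$ by $2\pi/n$, so it preserves the set $\CFn$ and, via Theorem~\ref{thm:subset}, the partial order $\prec$ (since $\ch$ is rotation-equivariant); consequently $\tau$ preserves maximal left-weightedness. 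Thus $\tau^{-r}(A_i A_{i+1})$ is maximally left-weighted iff $A_i A_{i+1}$ is, which holds because $W$ is in LCF. The same induction for decycling yields $\sup(d^N(W)) = \sup(W)$ for all $N$.

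To conclude, I would invoke the Birman--Ko--Lee / Elrifai--Morton stabilization principle implicit in \cite{KangKoLee, Birman}: iterated cycling of any braid eventually realizes the maximum infimum in its conjugacy class, and iterated decycling eventually realizes the minimum supremum. Together with the stability just established, this forces $\inf(W) = \inf[\beta]$ and $\sup(W) = \sup[\beta]$. By Proposition~\ref{prop:sss} these two extrema are attained simultaneously by every super summit element, so $\ell(\beta) = \sup[\beta] - \inf[\beta] = \ell[\beta]$ is minimal in its conjugacy class, and $\beta \in \SSS(\beta)$. The main obstacle is the induction in the middle paragraph, which hinges on the $\tau$-equivariance of $\prec$; once this is extracted from the diagrammatic characterization in Theorem~\ref{thm:subset} the rest is bookkeeping with classical Garside-type facts about cycling and decycling.
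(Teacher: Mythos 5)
Your outline founders at its central step, and the hypothesis $\ell(\beta)\geq 3$ --- which you never use --- is the tell. You claim that $\ell(c(W))=\ell(W)$ forces the cycled word $\delta^r A_2\cdots A_k\tau^{-r}(A_1)$ to already be in left canonical form, ``equivalently'' that the new pair $A_k\,\tau^{-r}(A_1)$ is maximally left weighted. Equality of canonical length only gives $\inf(c(W))=r$ and $\sup(c(W))=r+k$; it does not prevent the left-weighting procedure from shifting material between factors, so that $\LCF(c(W))$ has the same number of factors but different ones. A concrete $B_4$ example: $W=(a_1a_4)\cdot a_1$ is maximally left weighted (it is the rotation of the pair $(a_2a_1)\cdot a_2$ in Table~\ref{table:2}), with $r=0$, $k=2$; cycling gives $a_1\cdot(a_1a_4)$, which is \emph{not} left weighted --- Table~\ref{table:1} gives $a_1\cdot(a_1a_4)\Rightarrow (a_1a_4)\cdot b_2$ --- yet the canonical length is still $2$. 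So your ``equivalently'' is false, the base case of your induction on $N$ is unjustified, and once the factors of $\LCF(c(W))$ are no longer the cyclically shifted $A_i$'s, the next cycling does not produce pairs of the form $\tau^{-r}(A_iA_{i+1})$ and the whole stability argument ($\inf(c^N(W))=r$ for all $N$) collapses. (This example has $\ell=2$, so it does not contradict the theorem itself; it contradicts your intermediate claim, which is stated without any use of $\ell\geq 3$.)

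The deeper issue is that you have relocated all the difficulty into a step you treat as bookkeeping. In a general Garside-type setting the one-step test ``a single cycling and a single decycling do not drop the canonical length'' does \emph{not} certify membership in the super summit set --- raising $\inf$ may require several cyclings --- so the theorem is genuinely a statement about the band-generator (dual) structure together with the hypothesis $\ell(\beta)\geq 3$, and its proof in \cite{KangKoLee} (for $B_4$) and \cite{Birman} (for $B_n$) rests on specific combinatorial lemmas about canonical factors, not on $\tau$-equivariance of $\prec$ plus the classical stabilization of iterated cycling/decycling. What must be shown is the contrapositive you skipped: if $\inf(\beta)<\inf[\beta]$ or $\sup(\beta)>\sup[\beta]$ and $\ell(\beta)\geq 3$, then already the first cycling or decycling strictly decreases $\ell$. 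Note also that the paper under review does not prove this theorem --- it quotes it from \cite{KangKoLee,Birman} --- so there is no in-paper argument to patch yours against; a correct write-up would have to import the actual arguments from those sources.
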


This is a key theorem to solving the conjugacy problem. For the solution to the conjugacy problem, readers may refer to \cite[Section 5]{Birman}.

\section{Detection of SQP and ASQP braids}\label{Sec:6}

In this section, we explore how the left canonical form looks like for (almost) strongly quasipositive braids. 

\begin{definition}\label{def:SQP}
A braid $\beta$ is a {\em strongly quasipositive braid} (SQP) if it can be represented by a word $W$ written as a product of positive powers of some of the band generators. A link $\mathcal{K}$ is a {\em strongly quasipositive link} if $\mathcal{K}$ can be represented by a strongly quasipositive braid. 
\end{definition}

\begin{definition}\label{def:ASQP}
A braid $\beta$ is an {\em almost strongly quasipositive braid} (ASQP) if it can be represented by a word $W$ written as a product of band generators and inverses such that the number of inverses used is at most one. 
A link $\mathcal{K}$ is an \textit{almost strongly quasipositive link} if $\mathcal{K}$ can be represented by an almost strongly quasipositive braid. 

Additionally, a braid $\beta$ is a \textit{strictly almost strongly quasipositive braid} if $\beta$ is almost strongly quasipositive but not strongly quasipositive. A link $\mathcal{K}$ is \textit{strictly almost strongly quasipositive} if $\mathcal{K}$ is almost strongly quasipositive but not strongly quasipositive.
\end{definition}

We have four results on SQP braids and ASQP braids. 

\begin{theorem}\label{Thm:SQ}
Let $n\geq 3$.
An $n$-braid $\beta\in B_n$ is strongly quasipositive if and only if $\inf(\beta)\geq 0$.
\end{theorem}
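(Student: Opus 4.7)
The plan is to prove both directions as a direct unpacking of Definition~\ref{def:CF} (the partial order $\leq$) and Definition~\ref{def:inf-sup} (the invariant $\inf(\beta)$). The key structural fact, which has already been established for $B_4$ in~(\ref{delta-expression}) and holds in $B_n$ more generally, is that $\delta$ itself is a positive word in band generators (e.g.\ $\delta = a_{n-1,n} a_{n-2,n-1}\cdots a_{1,2}$); consequently any non-negative power $\delta^r$ is a positive word.

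For the ($\Leftarrow$) direction, I would assume $\inf(\beta)\geq 0$ and set $r:=\inf(\beta)$. By Definition~\ref{def:inf-sup}, $\delta^r \leq \beta$. Applying Definition~\ref{def:CF}, this yields positive words $P,Q\in B_n^+$ with
\[
\beta = P\,\delta^r\, Q \quad \text{as braids in } B_n.
\]
Since $r\geq 0$, $\delta^r$ is a positive word in band generators, so the right-hand side exhibits $\beta$ as a positive word in band generators. By Definition~\ref{def:SQP}, $\beta$ is strongly quasipositive.

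For the ($\Rightarrow$) direction, I would suppose $\beta$ is SQP, so $\beta$ equals some positive word $P\in B_n^+$ in band generators. Then $\beta = P = P\cdot e\cdot e$ with both trailing factors being empty (hence positive) words, which by Definition~\ref{def:CF} gives $e=\delta^0\leq\beta$. Consequently $\inf(\beta)\geq 0$.

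There is no real obstacle to this argument; the statement is essentially a reformulation of ``$\beta\in B_n^+$'' as ``$\delta^0\leq\beta$'' using the Birman--Ko--Lee partial order. The one thing worth emphasizing, rather than an obstacle, is that this works uniformly in $n$ precisely because $\delta$ is positive in band generators; if $\delta$ only had positive expressions in Artin generators the argument would give P rather than SQP.
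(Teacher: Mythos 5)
Your proof is correct, and it takes a slightly different (and more elementary) route than the paper. For the forward direction the paper applies the left canonical form algorithm to a positive band-generator word and observes that no $\delta^{-1}$ is ever produced, so $\inf(\beta)\geq 0$; you instead note that a positive representative gives $e=\delta^{0}\leq\beta$ directly from Definition~\ref{def:CF}, so $0$ lies in the set defining $\inf(\beta)$. For the reverse direction the paper invokes Theorem~\ref{thm:LCF}: writing $\LCF(\beta)=\delta^{r}A_{1}\cdots A_{k}$ with $r=\inf(\beta)\geq 0$ and $A_{i}\in\CFn\subset B_n^{+}$ exhibits $\beta$ as a positive word; you instead use only that the maximum in Definition~\ref{def:inf-sup} is attained, so $\delta^{r}\leq\beta$ gives $\beta=P\delta^{r}Q$ with $P,Q\in B_n^{+}$, and positivity of $\delta$ in band generators finishes the argument. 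What your version buys is independence from the normal-form machinery: you never need the existence or uniqueness of $\LCF(\beta)$, only the definitions of $\leq$ and $\inf$ (together with the standing fact, implicit in Definition~\ref{def:inf-sup}, that the maximum exists). What the paper's version buys is consistency with its theme that $\inf(\beta)$ is read off from $\LCF(\beta)$, which is the form in which the theorem gets used later (e.g.\ in Corollary~\ref{cor:sqpconj} and Theorem~\ref{Thm:ASQP}). Your closing remark about $\delta$ being positive in band generators (so the conclusion is SQP rather than merely P) is exactly the right point to flag.
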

\begin{proof} 
($\Rightarrow$) Let $\beta\in B_n$ be strongly quasipositive. By definition, $\beta$ can be represented by a positive word, $W$, in band generators of $B_n$. Apply the algorithm for left canonical form (Section~\ref{sec:LCF}) to $W$ we obtain $\inf(\beta)\geq 0$. 

($\Leftarrow$) Suppose 
$\beta$ has $\inf(\beta)\geq 0$. Then, 
\begin{equation*}  \LCF(\beta)=\delta^rA_1A_2\cdots A_k
\end{equation*}
where $r\geq 0$, $A_i\in\CFn$, and $R(A_i)\cap S(A_{i+1})=\emptyset$. 
Since $\CFn \subset B_n^+$, each $A_i$ is a positive word in band generators. Therefore, $\beta \in B_n^+$, in particular $\beta$ is strongly quasipositive. 
\end{proof}

The corollary below follows from Proposition~\ref{prop:sss} on super summit set. 
\begin{corollary}\label{cor:sqpconj}
A braid $\beta\in B_n$ is conjugate to a strongly quasipositive braid if and only if every element $\beta' \in \SSS(\beta)$ has $\inf(\beta')\geq 0$.
\end{corollary}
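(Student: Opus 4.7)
The plan is to derive the corollary directly by coupling the two results already in hand: Theorem~\ref{Thm:SQ}, which characterizes strongly quasipositive braids as those with nonnegative infimum, and Proposition~\ref{prop:sss}, which says that every super summit element realizes the conjugacy class invariant $\inf[\beta]$. Since both directions only use these two facts together with the definition of $\SSS(\beta)$, the proof should be quite short and I do not expect any serious obstacle; the only subtle point is to use the definition of $\inf[\beta]$ in the correct direction.

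For the forward implication, I would suppose $\beta$ is conjugate to a strongly quasipositive braid $\gamma$. By Theorem~\ref{Thm:SQ}, $\inf(\gamma)\geq 0$. By Definition~\ref{def:inf[beta]}, $\inf[\beta]$ is the maximum of $\inf(\beta'')$ over all conjugates $\beta''$ of $\beta$, so $\inf[\beta]\geq \inf(\gamma)\geq 0$. Now take any $\beta'\in\SSS(\beta)$; by Proposition~\ref{prop:sss} we have $\inf(\beta')=\inf[\beta]\geq 0$, as required.

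For the reverse implication, I would first note that $\SSS(\beta)$ is a nonempty subset of the conjugacy class $[\beta]$ (the canonical length is a nonnegative integer so the minimum is attained). Pick any $\beta'\in\SSS(\beta)$; by hypothesis $\inf(\beta')\geq 0$, so Theorem~\ref{Thm:SQ} applies and shows that $\beta'$ is strongly quasipositive. Since $\beta'$ is by construction conjugate to $\beta$, we conclude that $\beta$ is conjugate to a strongly quasipositive braid.

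The argument is essentially a one-line corollary of Theorem~\ref{Thm:SQ} and Proposition~\ref{prop:sss}; no additional machinery from earlier sections (non-crossing partition diagrams, the operation $\diamond$, or the \LCF\ algorithm) is needed at the level of this statement, since those tools were already invested into proving Theorem~\ref{Thm:SQ}.
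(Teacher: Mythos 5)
Your proof is correct and follows exactly the route the paper intends: the paper simply notes that the corollary "follows from Proposition~\ref{prop:sss}" together with Theorem~\ref{Thm:SQ}, and your two implications spell out precisely that argument (super summit elements realize $\inf[\beta]$, and nonnegativity of the infimum characterizes strong quasipositivity). No gaps; nothing further is needed.
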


In Example~\ref{ex_7_2}, we give a braid representative $\beta \in B_4$ of the knot that is not strongly quasipositive but is conjugate to a strongly quasipositive braid. The next theorem is the analog of Theorem \ref{Thm:SQ} in the almost strongly quasipositive case.

\begin{theorem}\label{Thm:ASQP}
Let $\beta\in B_n$ with $n\geq 3$. If $\beta$ is almost strongly quasipositive, then $\inf(\beta)\geq -1$. Further, if $\beta$ is strictly almost strongly quasipositive, then $\inf(\beta)= -1$.
\end{theorem}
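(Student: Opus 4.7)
The plan is to handle the two cases of ASQP separately: either $\beta$ uses zero negative bands, so $\beta\in B_n^+$ is SQP, or $\beta$ uses exactly one negative band. The SQP case reduces immediately to the forward direction of Theorem~\ref{Thm:SQ}, giving $\inf(\beta)\geq 0\geq -1$, so the substantive work is in the one-negative-band case.

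In that case, the key tool I would invoke is Corollary~\ref{cor:complementary}. Writing $\beta=W_1\,a_{ij}^{-1}\,W_2$ with $W_1,W_2\in B_n^+$, I would apply the corollary to the canonical factor $a_{ij}\in\CFn$ (each band generator is a $2$-gon) to obtain $C\in\CFn$ with $a_{ij}\cdot C=\delta$, yielding the substitution $a_{ij}^{-1}=C\delta^{-1}$. After this move, $\beta=W_1 C\delta^{-1}W_2$ is a positive word in band generators times a single factor of $\delta^{-1}$.

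Next I would migrate $\delta^{-1}$ to the leftmost position using the identity $\gamma\delta^{-1}=\delta^{-1}\tau^{-1}(\gamma)$, where $\tau(\gamma)=\delta^{-1}\gamma\delta$ is the inner automorphism recalled in Section~\ref{sec:LCF-def}; this is precisely the normalization step of the Kang--Ko--Lee LCF algorithm reviewed in Section~\ref{sec:LCF}. Because $\tau$ acts by a rotation on non-crossing partition diagrams, it permutes $\BG(B_n)$, so $\tau^{-1}(W_1 C)$ is again a positive word in band generators. Thus $\beta=\delta^{-1}P$ with $P\in B_n^+$, which means $\delta^{-1}\leq\beta$, and hence $\inf(\beta)\geq -1$ by Definition~\ref{def:inf-sup}.

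For the strictly-ASQP statement, I would combine this bound with the if and only if of Theorem~\ref{Thm:SQ}: if $\beta$ is strictly ASQP then $\beta$ is not SQP, so $\inf(\beta)<0$, and together with $\inf(\beta)\geq -1$ this forces $\inf(\beta)=-1$. I do not anticipate any substantive obstacle. The only point requiring care is the bookkeeping of the $\tau^{-1}$-conjugation moves and the key observation that a single negative band produces exactly one factor of $\delta^{-1}$ in the normal form, no more; this is really what makes the bound $\inf(\beta)\geq -1$ sharp in the ASQP setting.
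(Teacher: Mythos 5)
Your proposal is correct and follows essentially the same route as the paper: the paper likewise notes that in the LCF algorithm the single negative band contributes one $\delta^{-1}$ (which is exactly your $a_{ij}^{-1}=C\delta^{-1}$ substitution followed by shifting $\delta^{-1}$ left via $\tau$), giving $\inf(\beta)\geq -1$, and then invokes Theorem~\ref{Thm:SQ} to force $\inf(\beta)=-1$ in the strictly ASQP case. Your write-up merely spells out the normalization step in more detail than the paper does.
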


\begin{proof}
Suppose $\beta\in B_n$ is almost strongly quasipositive. If $\beta$ is strongly quasipositive, we can apply Theorem \ref{Thm:SQ}. Therefore, we will assume that $\beta$ is strictly almost strongly quasipositive. Suppose that $\beta$ can be represented by a word, $W$, with exactly one negative band. In the algorithm for left canonical form, a negative band contributes $\delta^{-1}$. This implies that $\inf(W)\geq -1$. 
On the other hand, since $\beta$ is not strongly quasipositive, we have $\inf(\beta)<0$ by Theorem \ref{Thm:SQ}. Therefore, we obtain $\inf(W)=-1$. Since $\LCF(\beta)=\LCF(W)$, we get $\inf(\beta)=-1$.  
\end{proof}

Theorems \ref{Thm:SQ} and \ref{Thm:ASQP}, and Corollary \ref{cor:sqpconj} give the following: 

\begin{corollary}\label{cor:SSS-ASQP}
If $\beta\in B_n$ with $n\geq 3$ is conjugate to an ASQP (resp. strictly ASQP) braid, then  every element $\beta'\in\SSS(\beta)$ has $\inf(\beta')\geq -1$ (resp. $\inf(\beta')=-1$). 
\end{corollary}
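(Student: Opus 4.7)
The plan is to combine Theorem~\ref{Thm:ASQP} (which bounds $\inf$ pointwise for any (strictly) ASQP braid) with Proposition~\ref{prop:sss} (every super summit element realizes $\inf[\beta]$) and the classical Elrifai--Morton fact that conjugate braids share the same super summit set, so that $\SSS(\beta)=\SSS(\gamma)$ whenever $\beta$ and $\gamma$ are conjugate. With these three inputs in place, both assertions reduce to chasing the definitions of $\inf(\gamma)$, $\inf[\gamma]$, and $\inf(\beta')$.

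For the ASQP case, I would pick a conjugate $\gamma$ of $\beta$ that is ASQP. Then $\SSS(\beta)=\SSS(\gamma)$, and for any $\beta'\in\SSS(\beta)$ Proposition~\ref{prop:sss} gives $\inf(\beta')=\inf[\beta]=\inf[\gamma]$. By the definition $\inf[\gamma]=\max\{\inf(\gamma'')\mid \gamma''\sim\gamma\}$ we have $\inf[\gamma]\geq\inf(\gamma)$, and Theorem~\ref{Thm:ASQP} gives $\inf(\gamma)\geq -1$. Chaining these inequalities yields $\inf(\beta')\geq -1$, as desired.

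For the strictly ASQP case, the lower bound $\inf(\beta')\geq -1$ is immediate from the ASQP case. For the matching upper bound, I use that a strictly ASQP $\gamma$ is not SQP, so Theorem~\ref{Thm:SQ} forces $\inf(\gamma)<0$; combined with the ASQP bound $\inf(\gamma)\geq -1$, this pins $\inf(\gamma)=-1$. The remaining step is to promote this pointwise statement to the conjugacy-invariant statement $\inf[\gamma]=-1$, equivalently (by Corollary~\ref{cor:sqpconj}) that no element of $\SSS(\gamma)$ has $\inf\geq 0$. The main obstacle is exactly this last step: a priori a braid can be written with a single negative band and still be conjugate to an SQP representative, so one must interpret the strictly ASQP hypothesis in a way that also rules out SQP conjugates, rather than just SQP equality of braids. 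Once that upper bound $\inf[\gamma]\leq -1$ is in hand, combining with the lower bound yields $\inf(\beta')=\inf[\gamma]=-1$, completing the proof.
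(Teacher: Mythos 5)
Your treatment of the ASQP half is correct and is exactly the route the paper intends (the paper offers no argument beyond citing Theorems~\ref{Thm:SQ}, \ref{Thm:ASQP} and Corollary~\ref{cor:sqpconj}): for an ASQP conjugate $\gamma$ of $\beta$, Theorem~\ref{Thm:ASQP} gives $\inf(\gamma)\geq -1$, hence $\inf[\beta]=\inf[\gamma]\geq -1$, and Proposition~\ref{prop:sss} transfers this to every $\beta'\in\SSS(\beta)$.

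For the strictly ASQP half, the obstacle you flag is a genuine gap, and you are right that it cannot be closed from the quoted results: the needed implication ``a strictly ASQP braid has no SQP conjugate, hence $\inf[\beta]\leq -1$'' is in fact false at the level of braid words. Concretely, in $B_3$ take $\gamma=\sigma_1\sigma_2\sigma_1^{-1}=a_{1,2}\,a_{2,3}\,a_{1,2}^{-1}$. It is ASQP by inspection; it is not SQP, since its exponent sum is $1$, so an SQP expression would have to be a single band generator, while $\gamma$ equals none of $\sigma_1$, $\sigma_2$, $a_{1,3}=\sigma_2\sigma_1\sigma_2^{-1}$ (using $\sigma_1\sigma_2\sigma_1=\sigma_2\sigma_1\sigma_2$, the equality $\gamma=a_{1,3}$ would force $\sigma_1^{2}=\sigma_2^{2}$, which fails in $B_3$). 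Thus $\gamma$ is strictly ASQP, yet it is conjugate to the SQP braid $\sigma_2$, so by Corollary~\ref{cor:sqpconj} every element of $\SSS(\gamma)$ has $\inf\geq 0$, not $-1$. Consequently the ``(resp.)'' clause only holds if ``conjugate to a strictly ASQP braid'' is read as a property of the conjugacy class, namely $\nb[\beta]=1$: some conjugate is ASQP and no conjugate is SQP. That is exactly how the hypothesis is used later in the proof of Theorem~\ref{thm:strictlyasqp} (``$\nb[\beta]=1$''), and under that reading your sketch closes at once: the absence of an SQP conjugate gives $\inf[\beta]<0$ by Corollary~\ref{cor:sqpconj} together with Proposition~\ref{prop:sss}, and combined with the lower bound this forces $\inf(\beta')=-1$ for all $\beta'\in\SSS(\beta)$. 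In short, your ASQP argument matches the paper, and the hole you isolated in the strict case is real: it is a gap in the statement as written, silently assumed away by the paper's one-line citation, rather than a missing idea on your side.
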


For both Theorem~\ref{Thm:ASQP} and Corollary~\ref{cor:SSS-ASQP}, the converse direction does not hold in general. 
See Example~\ref{ex:strict-inequality} below. However, if we add an additional condition, the converse of Corollary~\ref{cor:SSS-ASQP} holds. See Theorem \ref{thm:strictlyasqp} below.

\section{The Bennequin Inequality and $\LCF(\beta)$}\label{Sec:7}

We define three {\em negative band numbers} $\nb(\beta), \nb[\beta], \nb(K)$ as follows: 

\begin{definition}\label{def-of-m}
For a braid $\beta\in B_n$ the minimal number of negative bands, $\nb(\beta)$, is defined as follows:
$$
\nb(\beta)=\min\left\{ k \, \middle\vert
    \begin{array}{l}
    \text{ $\beta$ is represented by a word in band}\\
    \text{generators containing $k$ negative bands }
    \end{array}
\right\}
$$
Similarly for the conjugacy class $[\beta]$ of $\beta$ we define:
$$
\nb[\beta]=\min\{\nb(\beta') \mid \text{ $\beta'$ is conjugate to $\beta$ } \}
$$
For a knot or link $K \subset S^3$ we define:
$$
\nb(K)=\min\left\{ \nb(\beta) \mid 
\text{ $\beta$ is a braid representative of $K$ }
\right\}
$$
\end{definition}
For a braid representative $\beta$ of a knot $K$, 
$$0\leq \nb(K) \leq \nb[\beta] \leq \nb(\beta).$$
We can characterize SQP braids and ASQP braids in terms of the negative band number. 
\begin{itemize}
\item
The braid $\beta$ is SQP if and only if $\nb(\beta)=0$. 
\item
$\beta$ is ASQP if and only if $\nb(\beta)\leq 1$.
\item
$\beta$ is strictly ASQP if and only if $\nb(\beta) =1$.
\end{itemize}

The invariant $\nb(K)$ is closely related to the Bennequin type inequalities. 
Bennequin \cite{Bennequin} showed the maximal self-linking number $\SL(K)$ of a given knot type $K$ is bounded above by $-\chi(K)=2 g(K)-1$ where $g(K)$ is the genus of the knot. 
$$\SL(K)\leq 2g(K)-1$$
Bennequin's inequality has been extended to concordance invariants including the slice genus $g_4(K)$, Ozsv\'ath-Szab\'o's tau-invariant $\tau(K)$ \cite{OsvathSzabo} and Rusmussen's $s$-invariant $s(K)$ \cite{Rasmussen} as follows. 
$$\SL(K) \leq 2 \tau(K) -1 \leq 2 g_4(K)-1 \leq 2 g(K)-1$$
$$\SL(K) \leq  s(K) -1 \leq 2 g_4(K)-1 \leq 2 g(K)-1$$
Ito and Kawamuro \cite{KeikoIto} defined the defect $\D(K)$ of the (original) Bennequin inequality 
$$\D(K)=\frac{1}{2}(2 g(K)-1-\SL(K)).$$
It is shown that $\D(K) \in \mathbb Z$ and $\D(K) \leq \nb(K)$. In fact, it is conjectured that $\D(K) = \nb(K)$. 

We will study the negative band numbers $\nb(\beta)$, $\nb[\beta]$, and $\nb(K)$ in terms of the left canonical form. 

\begin{lemma}\label{lem<0}
We have
$\nb(\beta)\geq 1$ if and only if $\inf(\beta)<0$. 
$($Equivalently, $\nb(\beta)=0$ if and only if $\inf(\beta)\geq 0.)$
\end{lemma}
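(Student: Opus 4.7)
The plan is to recognize that this lemma is essentially a reformulation of Theorem~\ref{Thm:SQ} via the characterization of strongly quasipositive braids in terms of the negative band number. Recall that immediately below Definition~\ref{def-of-m}, we noted the equivalence
\[
\nb(\beta)=0 \iff \beta \text{ is strongly quasipositive},
\]
which follows directly from the definitions: $\nb(\beta)=0$ means some representative of $\beta$ uses only positive bands, which is exactly the SQP condition (Definition~\ref{def:SQP}).

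Given this, the proof reduces to chaining two equivalences. First, by the above,
\[
\nb(\beta)\geq 1 \iff \nb(\beta)\ne 0 \iff \beta \text{ is not SQP}.
\]
Second, by Theorem~\ref{Thm:SQ}, $\beta$ is SQP if and only if $\inf(\beta)\geq 0$, so the negation gives
\[
\beta \text{ is not SQP} \iff \inf(\beta)<0.
\]
Combining these two equivalences yields $\nb(\beta)\geq 1 \iff \inf(\beta)<0$, which is the desired statement.

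Since both ingredients are already established, there is no real obstacle here; I would simply present the argument as a short two-line chain of equivalences, perhaps noting explicitly why $\nb(\beta)=0 \Leftrightarrow \beta\in B_n^+$ (one direction is tautological, the other uses that a band-generator word with zero negative bands is by definition a positive word in band generators). The only minor care required is to make sure that the equivalence reads correctly at the boundary value $\inf(\beta)=0$, which is handled by the fact that Theorem~\ref{Thm:SQ} treats $\inf(\beta)\geq 0$ (not strict) as the SQP criterion.
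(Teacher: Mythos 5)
Your proposal is correct and follows exactly the paper's argument: the paper's proof also observes that $\nb(\beta)\geq 1$ holds precisely when $\beta$ is not strongly quasipositive and then invokes Theorem~\ref{Thm:SQ}. Your version merely spells out the tautological step $\nb(\beta)=0 \Leftrightarrow \beta\in B_n^+$ a bit more explicitly, which is fine.
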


\begin{proof}
Let $\beta\in B_n$ with $n\geq 3$ and suppose that $\nb(\beta)\geq 1$. Notice $\nb(\beta)\geq 1$ if and only if $\beta$ is not strongly quasipositive. Thus, $\nb(\beta)\geq 1$ is equivalent to $\inf(\beta)<0$ by Theorem~\ref{Thm:SQ}. 
\end{proof}

\begin{theorem}\label{thm:inequality}
Let $\beta\in B_n$ with $n\geq 3$ and $\nb(\beta)\geq 1$. 
$$
0< -\inf(\beta)= |\inf(\beta)| \leq \nb(\beta). 
$$
\end{theorem}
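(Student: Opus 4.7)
The first assertion $0 < -\inf(\beta) = |\inf(\beta)|$ is immediate: by Lemma~\ref{lem<0}, the hypothesis $\nb(\beta) \geq 1$ is equivalent to $\inf(\beta) < 0$, so $-\inf(\beta) = |\inf(\beta)| > 0$.

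The substantive inequality to prove is $|\inf(\beta)| \leq \nb(\beta)$. My plan is to exploit the ``shift $\delta^{-1}$ to the left'' rewriting already used in the LCF algorithm (Section~\ref{sec:LCF}). Set $k := \nb(\beta)$ and choose, by definition of $\nb(\beta)$, a band-generator word
\[
W = B_1^{\epsilon_1} B_2^{\epsilon_2} \cdots B_m^{\epsilon_m}, \qquad B_i \in \BG(B_n),\ \epsilon_i \in \{\pm 1\},
\]
representing $\beta$ and containing exactly $k$ negative bands (i.e.\ exactly $k$ of the $\epsilon_i$ equal $-1$).

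Next I would process the negative bands one at a time, from right to left. For each factor with $\epsilon_i = -1$, apply Corollary~\ref{cor:complementary} to obtain a canonical factor $B_i' \in \CFn$ with $B_i \cdot B_i' = \delta$, so $B_i^{-1} = B_i' \delta^{-1}$. Then use the identity $A \delta^{-1} = \delta^{-1} \tau^{-1}(A)$ (which holds for any braid $A$ because $\tau$ is conjugation by $\delta$) to migrate the new $\delta^{-1}$ past everything to its left; since $\tau^{-1}$ sends canonical factors to canonical factors and positive band generators to positive band generators, the rest of the word stays a positive band-generator word throughout this migration. Because each of the $k$ negative band generators in $W$ contributes exactly one $\delta^{-1}$ and no more are created, after exhausting all the negative factors we arrive at an expression
\[
\beta = \delta^{-k} \cdot P
\]
where $P \in B_n^+$ is a positive word in band generators.

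From $\beta = \delta^{-k} \cdot P$ with $P \in B_n^+$ we immediately obtain $\delta^{-k} \leq \beta$ in the sense of Definition~\ref{def:CF} (take the left positive factor empty and the right positive factor $P$). By the definition of $\inf$, this yields $\inf(\beta) \geq -k$, equivalently $-\inf(\beta) \leq k = \nb(\beta)$, which completes the proof.

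The only potentially delicate step is bookkeeping in the migration stage: one must verify that $\tau^{-1}$ preserves $\BG(B_n)$ so that the intermediate words remain positive band-generator words, and that each negative band contributes \emph{exactly} one $\delta^{-1}$ (not more). Both are built into the rewriting rules recalled in Section~\ref{sec:LCF}, so no real obstacle arises beyond organizing the induction on the number of negative bands processed.
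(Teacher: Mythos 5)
Your proposal is correct and follows essentially the same route as the paper: both replace each negative band by a canonical factor times $\delta^{-1}$ (via Corollary~\ref{cor:complementary}) and shift the $\delta^{-1}$'s to the left as in the LCF algorithm, concluding $\inf(\beta)\geq -\nb(\beta)$, with the first claim coming from Lemma~\ref{lem<0}. Your version is just slightly more explicit in extracting the inequality from $\beta=\delta^{-k}P$ with $P\in B_n^+$ and the definition of $\inf$, rather than citing the algorithm's output directly.
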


\begin{proof}
The first inequality follows from Lemma~\ref{lem<0}.

Suppose that $\beta$ is represented by a word $W$ in band generators with $\nb(\beta) (\geq 1)$ negative bands. Apply the algorithm for left canonical form (Section \ref{sec:LCF}) to $W$. The algorithm first replaces each negative band with an $A \delta^{-1}$ for some canonical factor $A \in \CFn$, then shifting $\delta^{-1}$ to the left. During the shifting process it is possible that some $\delta^{-1}$ vanishes if cancellation occurs. 
Therefore, we obtain $-\nb(\beta)\leq\inf(\beta)$.
\end{proof}

We study when the inequality $-\nb(\beta)\leq\inf(\beta)$ in Theorem~\ref{thm:inequality} becomes a strict inequality in Proposition \ref{prop:strictineq}.

\begin{example}\label{ex:strict-inequality}
We give an example of Theorem~\ref{thm:inequality} where strict inequality $-\nb(\beta)<\inf(\beta)$ holds.  
Consider a 4-braid $\beta:=a_3a_1^{-1}a_2^{-1}b_2b_1a_1b_2b_1a_3$. 
Running the LCF algorithm (Section~\ref{sec:LCF})
we obtain $$\LCF(\beta)= \delta^{-1}a_2a_3b_2b_1a_1b_2b_1a_3,$$ i.e., $\inf(\beta)=-1.$
Clearly, $\nb(\beta)\leq 2.$  
In the following, we will show $\nb(\beta)=2$.
\begin{figure}[h!]
    \centering
    \includegraphics[scale=0.7]{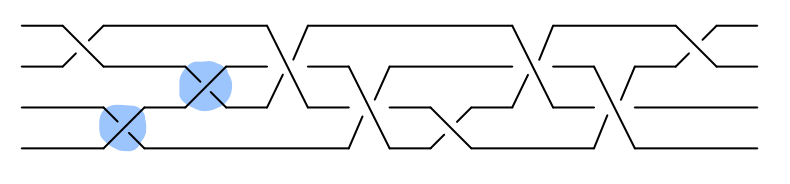}
    \caption{The braid $\beta:=a_3a_1^{-1}a_2^{-1}b_2b_1a_1b_2b_1a_3$.}
\end{figure}

To this end, we first show that the topological knot type of $\beta$ has braid index $4$. To do this, we will use the Morton-Franks-Williams Inequality \cite{Morton, FranksWilliams}. Let $K$ be the topological type of the braid closure $\hat{\beta}$. Let $i$ be the braid index of $K$. We will show $i(K)=4$, i.e., $\beta$ realizes the braid index of $K$. Let $E$ be the largest power and $e$ the smallest power of $a$ in the HOMFLY-PT polynomial of $K$. Then, the Morton-Franks-Williams inequality states
\begin{equation*}
    i\geq\frac{1}{2}(E-e)+1.
\end{equation*}
Mathematica's Knot Theory package computes the HOMFLY-PT polynomial for $K$ as
\begin{equation*}
    -a^{-6}+a^{-4}+a^{-2}+z^2a^{-8}-3z^2a^{-6}+4z^2a^{-4}-2z^4a^{-6}+3z^4a^{-4}-z^4a^{-2}+z^5a^{-4}.
\end{equation*}
In the above polynomial, we have $E=-2$ and $e=-8$. Therefore, the Morton-Franks-Williams inequality tells us that
\begin{equation*}
    i\geq 4.
\end{equation*}
Therefore, the braid index for $K$ is 4. Now we can proceed to our claim.\\

\noindent{\textbf{Claim:}} $\nb(\beta)\geq 2$. 
\begin{proof}
 First, we will show that $g(K)\leq 3$, where $g(K)$ is the genus of $K$. Consider a Bennequin surface, $\Sigma_W$, of $K$ coming from the braid $\beta$ represented by the word $W=a_3a_1^{-1}a_2^{-1}b_2b_1a_1b_2b_1a_3$ made of four disks joined together with nine twisted bands.
\begin{figure}[h!]
    \centering
    \includegraphics[scale=0.7]{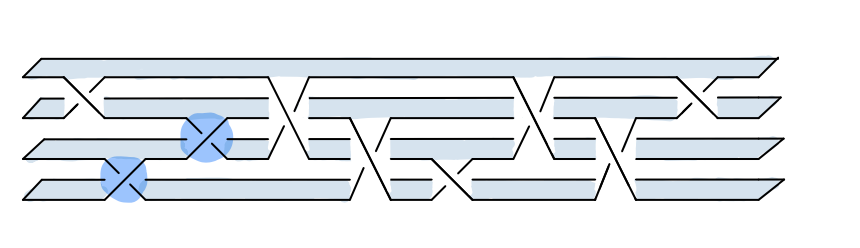}
    \caption{Bennequin Surface $\Sigma_W$}
\end{figure}

\noindent Therefore, 
\begin{equation*}
    \chi(K)\geq\chi(\Sigma_W)= 4-9=-5.
\end{equation*} 
Using the fact that $\chi(K)=1-2g(K)$ for any knot $K$, we find that $g(K)\leq 3$.

Next, we will show that $g(K)\geq 3$. Recall that
\begin{equation*}
    \text{degree}(\Delta_K(t))\leq 2g(K)
\end{equation*}
where $\Delta_K(t)$ is the Alexander polynomial of the knot \cite{Rolfsen}. Using Mathematica's Knot Theory Package, the Alexander polynomial of $K$ is given by
\begin{equation*}
    1-6t+17t^2-23t^3+17t^4-6t^5+t^6.
\end{equation*}
The degree of this polynomial is 6. Therefore, $g(K)\geq 3$.

We have shown that $g(K)=3$. Hence, $\chi(K)=-5$. Recall the Bennequin Inequality \cite{Bennequin}
\begin{equation*}
    \SL(K)\leq-\chi(K)
\end{equation*}
where $\SL(K)$ is the maximal self linking number of the link type $K$. By the truth of the Generalized Jones Conjecture \cite{DynnikovPrasolov,Keiko06,LaFountainMenasco}, $\SL(K)=-4+p-n$. Here, $p$ is the number of positive bands and $n$ is the number of negative bands in a braid representative which realizes the minimal braid index 4. In our example $\SL(K)=-4+7-2=1$. Observe the following inequality:
\begin{equation*}
    \SL(K)\leq-\chi(K)\leq -4+p+n
\end{equation*}
Subtracting $\SL(K)=-4+p-n$ from each side, we get
\begin{equation*}
    0\leq-\chi(K)- \SL(K)\leq2n
\end{equation*}
We have $\chi(K)=-5$ and $\SL(K)=1$. Therefore,
\begin{equation*}
    0\leq5-1\leq 2n
\end{equation*}
Therefore, $2\leq n$. This shows that $\beta$ must contain at least 2 negative bands, i.e., $\nb(\beta)\geq 2$.
\end{proof}
We may conclude $\nb(\beta)=2$. 

We note that $\nb(\beta)=2$ implies $\beta$ is not ASQP. Further, after computing cyclings and decyclings of $\LCF(\beta)$, we can apply Theorem 2.27 to verify that $\beta \in \SSS(\beta)$. We also have that $\inf(\beta)=-1$, which means the converse direction of Corollary~\ref{cor:SSS-ASQP} does not hold in general.
\end{example}

\section{The Negative Band Number and Reduction of $\LCF(\beta)$}\label{Sec:8}

The reduction operation for braids has been introduced by Kang, Ko and Lee \cite{KangKoLee} in order to solve the shortest word problem for 4-braids (see Theorem~\ref{thm:KKL-shortest}). 
We first review the reduction operation then apply it to investigate the relation between the negative band number $\nb(\beta)$ and the left canonical form $\LCF(\beta)$. 

\subsection{Review of Reduction Operation} 
\begin{definition}\label{def:Red}
Given a word  $W=\delta^rW_1W_2\cdots W_s$ with $\delta^{-1}<W_i<\delta$ and $W_i\neq e$, define $\red(W)$ as:
\begin{enumerate}[(1)]
    \item if $r\geq 0$ or $W_i<e$ for all $i=1,\dots,s$, then $\red(W)=W$;
    \item otherwise (i.e., $r<0$ and there exists some $i$ with $W_i\in\CF$), choose a word $W_k\in \CF$ whose word length $||W_k||$ is maximal among all $W_1, \dots, W_k$. 

By Corollary~\ref{cor:complementary} there exists $V_k \in \CF$ such that $W_k \diamond V_k = W_k V_k = \delta$. Put $W_k' = (V_k)^{-1}.$
We define
    \begin{equation*}
        \red(W)=\delta^{r+1}\tau(W_1)\cdots\tau(W_{k-1})W_k'W_{k+1}\cdots W_s,
    \end{equation*}
\end{enumerate}
Define $\Red(W):=\red^{|r|}(W)$, i.e., repeat the above algorithm until the exponent of $\delta$ is non-negative. 
\end{definition}

\begin{example}
   This example will show how to compute $\Red(W)$, where 
    \begin{align*}
        W&=\delta^{-2}(a_3a_2)(a_4a_3)a_4b_1b_2\\
        &=(\delt)^{-2}(\athreeatwo)(\afourathree)(\afour)(\bone)(\btwo)\\
        &=\delta^{-2}W_1W_2W_3W_4W_5
    \end{align*}
    Here, $r<0$, so we will proceed to option (2) in the algorithm. We will choose the word $W_2=(a_4a_3)$ of maximal word length among all $W_i$. Then,
    \begin{align*}
        W_2'&=\delta^{-1}W_2\\
        &=\delta^{-1}(a_4a_3)\\
        &=(a_4a_3a_2)^{-1}(a_4a_3)\\
        &=a_2^{-1}a_3^{-1}a_4^{-1}a_4a_3\\
        &=a_2^{-1}
    \end{align*}
    Thus,
    \begin{align*}
        \red(W)&=\delta^{-1}\tau(W_1)W_2'W_3W_4W_5\\
        &=\delta^{-1}(a_4a_3)a_2^{-1}a_4b_1b_2
    \end{align*}
    We will repeat this process one more time. Again, we will choose the word $(a_4a_3)$ of maximal length among all $W_i$. Then,
    \begin{equation*}
        \Red(W)=\red^2(W)=a_2^{-1}a_2^{-1}a_4b_1b_2.
    \end{equation*}
\end{example}

We note that both $\red(W)$ and $\Red(W)$ depend on choices. However, the next lemma shows that the word-length $||\Red(W))||$ is uniquely determined. 

\begin{lemma}
{\em (Kang, Ko, Lee \cite[Lemma 5.1]{KangKoLee})}
\label{lem:KKL-reduced}
Let $\beta\in B_n$ be an $n$-braid. For any word representative $W$ of $\beta$, every reduced word of the left canonical form minimizes the word length; namely, 
$$||\Red(\LCF(\beta))|| \leq ||W||.$$
\end{lemma}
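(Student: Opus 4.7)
The plan is to reduce word-length minimisation to minimisation of the negative band count via a standard exponent-sum trick. Let $e\colon B_n\to\mathbb Z$ be the homomorphism sending every band generator to $1$. For any band-generator word $W$ representing $\beta$, with $n_+(W)$ positive and $n_-(W)$ negative bands, we have $||W||=n_+(W)+n_-(W)$ and $e(\beta)=n_+(W)-n_-(W)$, hence
$$||W||=e(\beta)+2n_-(W).$$
Since $\min_W n_-(W)=\nb(\beta)$ by Definition~\ref{def-of-m}, the minimum word length over all representatives of $\beta$ is exactly $e(\beta)+2\nb(\beta)$, so the lemma is equivalent to the single inequality
$$n_-(\Red(\LCF(\beta)))\leq\nb(\beta),$$
the reverse direction being immediate from the definition of $\nb$.

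First I would record the key numerical identity for one reduction step. By Corollary~\ref{cor:complementary}, every canonical factor $A\in\CFn$ has a unique complement $A^*\in\CFn$ with $AA^*=\delta$; diagrammatically $A$ and $A^*$ are Kreweras complementary non-crossing partitions, so $c(A)+c(A^*)=n+1$ where $c(\cdot)$ counts blocks. Together with $||A||=n-c(A)$ this yields the additivity $||A||+||A^*||=n-1=||\delta||$. A direct bookkeeping on a current word $\delta^{r_c}B_1\cdots B_m$ with $r_c<0$, choosing the maximum-length factor $B_k$, then shows
$$||\red(W_{\mathrm{cur}})||=||W_{\mathrm{cur}}||-2\,||B_k||,\qquad n_-(\red(W_{\mathrm{cur}}))=n_-(W_{\mathrm{cur}})-||B_k||.$$
Iterating $|r|=|\inf(\beta)|$ times gives
$$n_-(\Red(\LCF(\beta)))=|r|(n-1)-\sum_{j=1}^{|r|} ||B^{(j)}_{k_j}||,$$
and the target inequality reduces to $\sum_j ||B^{(j)}_{k_j}||\geq|r|(n-1)-\nb(\beta)$.

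I would prove this last inequality by induction on $|r|$. The base case $r\geq 0$ is handled by Theorem~\ref{Thm:SQ}: $\Red(\LCF(\beta))=\LCF(\beta)$ is then a positive word, so $n_-=0=\nb(\beta)$. For the inductive step the idea is to take any representative $W^\flat$ of $\beta$ achieving $n_-(W^\flat)=\nb(\beta)$, normalise $W^\flat$ into its own left canonical form via the algorithm of Section~\ref{sec:LCF} (which only permutes or regroups generators and never creates spurious negative bands beyond the $\delta^{-1}$ bookkeeping), and align the $|r|$ copies of $\delta^{-1}$ appearing in $\LCF(\beta)$ with the $|r|$ reduction steps so that each step absorbs at least as much positive material as $W^\flat$ already does against the corresponding negative piece.

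The main obstacle is proving that the greedy ``maximum length'' rule embedded in $\red$ realises the global maximum of $\sum_j ||B^{(j)}_{k_j}||$. A priori one might worry that a small cancellation now could block a larger one later; ruling this out requires an exchange argument on the poset $(\CFn,\prec)$, using Theorem~\ref{thm:subset} to slide canonical factors past one another and the fact that conjugation by $\delta$ respects $\prec$ (since $\tau$ rotates diagrams rigidly). Concretely, given any reduction sequence with total absorption $\Sigma$, one shows that a neighbouring sequence in which the longest canonical factor is absorbed first has total absorption at least $\Sigma$; a bubble-sort style argument then makes the greedy rule globally optimal. Once this is in hand, $n_-(\Red(\LCF(\beta)))=\nb(\beta)$, and the lemma follows from the exponent-sum identity.
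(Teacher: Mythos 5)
This lemma is quoted in the paper from Kang--Ko--Lee \cite[Lemma 5.1]{KangKoLee} and is not reproved there, so there is no internal proof to compare against; judged on its own, your proposal has a genuine gap. Your opening reduction is fine and matches the spirit of Lemma~\ref{lem:nb(beta)}: the exponent-sum identity $||W||=e(\beta)+2n_-(W)$ correctly converts the statement into the single inequality $n_-(\Red(\LCF(\beta)))\leq \nb(\beta)$, and your one-step bookkeeping $||\red(W)||=||W||-2||B_k||$, $n_-(\red(W))=n_-(W)-||B_k||$ is correct (using $||A||+||A^*||=n-1$). A minor slip: the iterated formula $n_-(\Red(\LCF(\beta)))=|r|(n-1)-\sum_{j=1}^{|r|}||B^{(j)}_{k_j}||$ presumes every one of the $|r|$ steps finds a positive factor to absorb, which fails when $\sup(\beta)<0$ (some $\delta^{-1}$'s survive, as in Case 2 of Theorem~\ref{thm:3braid-nb}); that is fixable.

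The real gap is the heart of the lemma: you never actually compare $\Red(\LCF(\beta))$ with an \emph{arbitrary} representative $W$ of $\beta$. Your proposed exchange/bubble-sort argument only compares different reduction sequences applied to $\LCF(\beta)$ with one another; even if it shows the greedy choice maximizes $\sum_j||B^{(j)}_{k_j}||$ among such sequences, that certifies minimality only within the family of words reachable by reduction moves from the left canonical form, not minimality over all band-generator words representing $\beta$, which is what $\nb(\beta)$ (and the lemma) refers to. The inductive step that is supposed to supply this comparison -- take a representative $W^\flat$ realizing $\nb(\beta)$, run the LCF algorithm on it, and ``align the $|r|$ copies of $\delta^{-1}$ with the reduction steps so that each step absorbs at least as much positive material'' -- is an aspiration, not an argument: no mechanism is given relating the canonical factors of $\LCF(\beta)$ to the letters of $W^\flat$. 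Moreover the claim that normalizing $W^\flat$ ``never creates spurious negative bands beyond the $\delta^{-1}$ bookkeeping'' glosses over precisely the difficulty, since rewriting one negative band $A^{-1}$ as $A'\delta^{-1}$ replaces a single negative letter by $n-1$ negative letters and $n-2$ positive ones, and controlling how much of that inflation the subsequent left-weighting and reduction can re-absorb is exactly the content of Kang--Ko--Lee's proof (which, note, they carry out for $B_3$ and $B_4$ by a case analysis, not by a formal greedy-exchange principle). As written, the proposal establishes the easy direction $n_-(\Red(\LCF(\beta)))\geq \nb(\beta)$ and reformulates the hard direction, but does not prove it.
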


Regarding the negative band number $\nb(\beta)$ and a shortest word we observe the following: 

\begin{lemma}\label{lem:nb(beta)}
Let $W$ be a word representing $\beta \in B_n$. 
The negative band number $\nb(\beta)$ is realized by $W$ if and only if $W$ gives a shortest word representing $\beta$.
\end{lemma}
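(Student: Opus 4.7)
The plan is to reduce the statement to a single numerical identity. For any word $W$ written in band generators, let $p(W)$ and $n(W)$ denote the number of positive and negative band letters in $W$, so that the word length is $||W||=p(W)+n(W)$. First I would verify that the exponent sum $e(W):=p(W)-n(W)$ descends to a well-defined function $e\colon B_n\to\mathbb{Z}$. This is immediate from the Birman--Ko--Lee presentation in (\ref{eq:gp presentation of Bn}): each defining relation $a_{jk}a_{ij}=a_{ij}a_{ik}=a_{ik}a_{jk}$ and $a_{ij}a_{kl}=a_{kl}a_{ij}$ equates two words consisting of two positive letters each, so the exponent sum is preserved by every relation and hence depends only on the braid $\beta$ represented by $W$. (Indeed, this is precisely the invariant that, together with braid index and number of negative bands, feeds into the self-linking computation $\SL(K)=-n+p-n$ used in Example~\ref{ex:strict-inequality}.)

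Once $e(\beta)$ is recognised as a braid invariant, combining $||W||=p(W)+n(W)$ with $e(\beta)=p(W)-n(W)$ yields
\[
||W||=e(\beta)+2\,n(W).
\]
The right-hand side is an affine function of $n(W)$ with positive slope and a constant term that depends only on $\beta$, not on the chosen representative. Consequently, over the set $\Wds(\beta)$ of all word representatives of $\beta$, the two functions $W\mapsto||W||$ and $W\mapsto n(W)$ attain their minima on exactly the same subset of words. Since $\nb(\beta)=\min_{W\in\Wds(\beta)}n(W)$ by Definition~\ref{def-of-m}, the word $W$ realises $\nb(\beta)$ if and only if $||W||$ equals the minimum word length, i.e., $W$ is a shortest word for $\beta$. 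Both directions of the biconditional follow simultaneously from this equivalence of minimisation problems.

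There is essentially no obstacle beyond the exponent-sum invariance check, which is a direct inspection of the two relations in the band generator presentation. I expect to use this lemma as the bridge that imports Kang--Ko--Lee's shortest-word machinery from Lemma~\ref{lem:KKL-reduced} into statements about $\nb(\beta)$: in particular, the subsequent estimate of $\nb(\beta)$ in terms of $\LCF(\beta)$ in Theorem~\ref{thm:3braid-nb} will be obtained by running $\Red$ on $\LCF(\beta)$ and counting the negative bands in the resulting (shortest) word via the affine identity $||W||=e(\beta)+2\,n(W)$.
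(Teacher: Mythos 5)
Your argument is correct and is essentially the paper's proof: the paper likewise combines $||W||=p+n$ with the invariance of the writhe $p-n$ of the braid to conclude that minimizing the number of negative bands and minimizing word length are the same problem, only phrased as a comparison between a shortest word and an $\nb$-realizing word rather than via the identity $||W||=e(\beta)+2\,n(W)$. Your packaging gives both directions at once, which is a mild streamlining, not a different route.
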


\begin{proof}
Let $W$ be a shortest word representing $\beta$ and let $p$ (resp. $n$) be the number of positive (resp. negative) bands in the word $W$. 
Assume that a braid word $W'$ represents $\beta$ and realizes $\nb(\beta)$. 
Let $p'$ (resp. $n'$) be the number of positive (resp. negative) bands in the word $W'$.
We get $$n'=\nb(\beta)\leq n.$$

Since $W$ is a shortest word, 
$$
p'+n'=||W'|| \geq ||W||=p+n.
$$
Since $W$ and $W'$ belong to the same conjugacy class $[\beta]$, they have the same writhe. 
$$
p'-n'={\rm writhe}(W')={\rm writhe}(W)=p-n
$$
Therefore, $n' \geq n$. 

Above two paragraphs give $n=n'=\nb(\beta)$. 
\end{proof}

As a corollary of the above two lemmas, we observe the following: 

\begin{corollary}\label{cor:nb(beta)} 
Every reduced word $\Red(\LCF(\beta))$ achieves $\nb(\beta)$.
\end{corollary}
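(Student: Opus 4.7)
The plan is to chain together Lemma~\ref{lem:KKL-reduced} and Lemma~\ref{lem:nb(beta)} directly, since they fit perfectly end-to-end. Lemma~\ref{lem:KKL-reduced} says $\Red(\LCF(\beta))$ minimizes the word length among all representatives of $\beta$ in band generators, and Lemma~\ref{lem:nb(beta)} characterizes the words realizing $\nb(\beta)$ as exactly the shortest band-generator words for $\beta$. So once I identify $\Red(\LCF(\beta))$ as a word representing $\beta$, the conclusion drops out.

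First I would check that $\Red(\LCF(\beta))$ is indeed a word representing $\beta$; this is immediate from Definition~\ref{def:Red}, since each application of $\red$ merely replaces a factor $\delta^{-1}W_k$ with the equivalent braid-group expression $\tau(\cdot)\cdots W_k'\cdots$, and hence preserves the element of $B_n$. Next, I apply Lemma~\ref{lem:KKL-reduced} with $W$ taken to be any word achieving $\nb(\beta)$: this gives
\[
\|\Red(\LCF(\beta))\| \leq \|W\|.
\]
By Lemma~\ref{lem:nb(beta)}, the word $W$ is a shortest word, so combined with the trivial reverse inequality $\|W\|\leq\|\Red(\LCF(\beta))\|$ (again from the shortest-word characterization applied to $\Red(\LCF(\beta))$ itself) we conclude $\Red(\LCF(\beta))$ is also a shortest word for $\beta$. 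Invoking the other direction of Lemma~\ref{lem:nb(beta)} — shortest words realize $\nb(\beta)$ — yields the corollary.

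There is essentially no obstacle here; the work has already been done in the two preceding lemmas, and the main content of the corollary is just to notice that the canonical constructive recipe $\Red(\LCF(\beta))$ produces a word with the minimum possible number of negative bands. The only point that deserves a sentence of care is the ambiguity in Definition~\ref{def:Red} (the choice of $W_k$ of maximal length), but Lemma~\ref{lem:KKL-reduced} was already stated to hold for \emph{every} such reduced word, so this ambiguity is harmless.
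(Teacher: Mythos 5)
Your proposal is correct and matches the paper, which gives no separate proof and simply observes the corollary as an immediate consequence of Lemma~\ref{lem:KKL-reduced} and Lemma~\ref{lem:nb(beta)}. Your slight detour (comparing with a word achieving $\nb(\beta)$ and then reversing the inequality) is harmless; one could also note directly that Lemma~\ref{lem:KKL-reduced} already exhibits $\Red(\LCF(\beta))$ as a shortest word, after which Lemma~\ref{lem:nb(beta)} finishes.
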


\subsection{Bounds of $\nb(\beta)$ in Terms of $\inf(\beta)$ and $\sup(\beta)$}

With the above preparation, we are now able to discuss the relation between $\nb(\beta)$ and $\inf(\beta)$. 
We first discuss when the inequality $-\nb(\beta)\leq\inf(\beta)$ in Theorem~ \ref{thm:inequality} becomes a strict inequality. 

\begin{proposition}\label{prop:strictineq}
Let $\beta\in B_n$ with $n\geq 3$ and $\nb(\beta)\geq 1$. 
If there is a shortest word representing $\beta$ that contains $A^{-1}$ for some canonical factor $A\in \CFn$ of word length $||A||\geq 2$ then we get a strict inequality $$-\nb(\beta)<\inf(\beta).$$ 
\end{proposition}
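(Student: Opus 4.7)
The plan is to exhibit an explicit expression for $\beta$ of the form $\delta^{r}P$ with $P\in B_n^+$ a positive word and $r>-\nb(\beta)$, then invoke the definition of $\inf(\beta)$.

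First, I would take a shortest band-generator word $W^{*}$ representing $\beta$, which by Lemma~\ref{lem:nb(beta)} contains exactly $\nb(\beta)$ negative bands. By hypothesis one may choose $W^{*}$ so that a contiguous subword reads $A^{-1}$, where $A\in\CFn$ is a canonical factor with $\|A\|=k\geq 2$. So $k$ of the negative bands of $W^{*}$ appear consecutively, spelling $A^{-1}$, while the remaining $\nb(\beta)-k$ negative bands are scattered elsewhere in the word.

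Second, instead of regarding $W^{*}$ as a product of individual (signed) band generators, I would regard it as a product of signed \emph{canonical factors}, grouping the $k$ bands spelling $A^{-1}$ into a single inverse-canonical-factor and leaving every other band as a length-one canonical factor. Under this grouping, $W^{*}$ has exactly $(\nb(\beta)-k)+1=\nb(\beta)-k+1$ inverse canonical factors. Now apply the reduction step from Section~\ref{sec:LCF}: by Corollary~\ref{cor:complementary}, for every canonical factor $C$ there is $V_{C}\in\CFn$ with $CV_{C}=\delta$, hence $C^{-1}=V_{C}\delta^{-1}$. Replacing each inverse canonical factor appearing in $W^{*}$ by $V_{C}\delta^{-1}$ (and in particular replacing the whole block $A^{-1}$ by $V_{A}\delta^{-1}$, \emph{not} by $k$ separate substitutions), and then commuting the $\delta^{-1}$'s leftward via $\delta^{-1}\gamma=\tau^{-1}(\gamma)\delta^{-1}$, I obtain a word
$$
\beta\;=\;\delta^{-(\nb(\beta)-k+1)}\,P
$$
in which $P$ is a product of canonical factors (hence positive), since $\tau^{-1}$ preserves $\CFn$. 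By Definition~\ref{def:inf-sup} this gives
$$
\inf(\beta)\;\geq\;-(\nb(\beta)-k+1)\;=\;-\nb(\beta)+(k-1),
$$
and since $k\geq 2$ the right-hand side is strictly greater than $-\nb(\beta)$, establishing the desired strict inequality.

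The main obstacle is a bookkeeping one rather than a structural one: I need to justify that grouping the $A^{-1}$ block as a single inverse canonical factor, instead of processing its $k$ band generators one at a time, is a legitimate step in the conversion from a band-word to $\LCF$. This is precisely where $\|A\|\geq 2$ is used, and it accounts for the strict gap of at least $k-1$ between $-\nb(\beta)$ and $\inf(\beta)$; if $A^{-1}$ were broken apart into $k$ individual inverses of band generators, the naive bound from Theorem~\ref{thm:inequality} would only give $\inf(\beta)\geq-\nb(\beta)$, so the entire improvement comes from keeping the block intact when invoking Corollary~\ref{cor:complementary}.
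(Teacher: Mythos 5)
Your argument is correct and follows essentially the same route as the paper: treat the block $A^{-1}$ as a single inverse canonical factor, so that the negative-exponent step of the LCF algorithm (equivalently Corollary~\ref{cor:complementary}, $A^{-1}=V_A\delta^{-1}$) charges it only one $\delta^{-1}$ while it accounts for $||A||\geq 2$ of the $\nb(\beta)$ negative bands, yielding $\inf(\beta)\geq -\nb(\beta)+||A||-1>-\nb(\beta)$. The ``obstacle'' you flag at the end is not a real one: since any expression $\beta=\delta^{r}P$ with $P$ positive gives $\delta^{r}\leq\beta$ and hence $\inf(\beta)\geq r$ directly from Definition~\ref{def:inf-sup}, no fidelity to the band-by-band LCF procedure is required to justify the grouping.
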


\begin{proof}
Let $W$ be a shortest word representing $\beta$ and containing $A^{-1}$ for some canonical factor $A\in \CFn$ of word length $||A||\geq 2$. 
By Lemma~\ref{lem:nb(beta)}, $W$ realizes the negative band number $\nb(\beta)$, and the contribution of $A^{-1}$ to the numerical value $\nb(\beta)$ is $||A||\geq 2$.

We note that $W$ is a non-positive word. 
According to the LCF algorithm (Section~\ref{sec:LCF}) for non-positive words, 
every canonical factor with negative exponent contributes one $\delta^{-1}$ to the left canonical form. 
In particular, the factor $A^{-1}\in W$ contributes $\delta^{-1}$ (or $\delta^0=e$ if some cancellation occurs) to $\LCF(\beta)$. 
In other words, $A^{-1}$ contributes 
$-1$ (or $0$ if some cancellation occurs) to $\inf(\beta)$. 

Comparing the contribution of $A^{-1}$ to $\nb(\beta)$ and $\inf(\beta)$, we obtain the desired strict inequality. 
\end{proof}

In the next theorem,
using the reduction operation we improve the inequality $-\nb(\beta)\leq\inf(\beta)$ in Theorem~\ref{thm:inequality} to an equation for 3-braids: 

\begin{theorem}\label{thm:3braid-nb}
Let $\beta$ be an n-braid.  
If $\inf(\beta)<0$ then 
$$\nb(\beta)\leq (n-2) |\inf(\beta)| - \min\{0, \sup(\beta)\}.$$ 
Moreover, the equality holds when $n=3$ and we have
$$\nb(\beta)= |\inf(\beta)| - \min\{0, \sup(\beta)\}.$$
\end{theorem}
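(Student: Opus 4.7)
The plan is to bound $\nb(\beta)$ from above for general $n$ by tracking the number of negative bands through the reduction procedure $\Red$ of Definition~\ref{def:Red}, and then for $n=3$ to complement this with a matching lower bound coming from Theorem~\ref{thm:inequality} together with an exponent-sum (writhe) computation.

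For the upper bound, I would write $\LCF(\beta) = \delta^{r} A_1 \cdots A_k$ with $r = \inf(\beta) < 0$ and $k = \ell(\beta) = \sup(\beta) - r$. By Corollary~\ref{cor:nb(beta)}, $\nb(\beta)$ equals the number of negative bands in $\Red(\LCF(\beta))$. A single application of $\red$ raises the leading $\delta$-exponent by one and swaps a chosen positive canonical factor $W_j$ of word length $\ell_j$ for the negative word $W_j' = \delta^{-1} W_j = V_j^{-1}$, where $V_j$ is the complementary factor provided by Corollary~\ref{cor:complementary} and has word length $n-1-\ell_j$. Thus a single $\red$-step removes $n-1$ negative bands coming from one $\delta^{-1}$ and introduces $n-1-\ell_j$ new ones, for a net decrease of exactly $\ell_j$. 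Since every factor in $\CFn \setminus \{e,\delta\}$ has $\ell_j \geq 1$ and $\tau$-rotation preserves positivity (so a $W_j'$ never reverts to a positive canonical factor in later steps), the algorithm performs exactly $\min(|r|, k)$ reduction steps before halting.

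If $k \geq |r|$, all $|r|$ reductions succeed and the final negative band count is at most $|r|(n-1) - |r| = (n-2)|r|$, which matches the claimed bound since $\min\{0,\sup(\beta)\} = 0$. If $k < |r|$, only $k$ reductions are carried out; the residue $\delta^{r+k} = \delta^{\sup(\beta)}$ remains, and the final count is at most $|r|(n-1) - k = (n-2)|r| + (|r|-k) = (n-2)|r| - \sup(\beta) = (n-2)|r| - \min\{0,\sup(\beta)\}$. In either case, $\nb(\beta) \leq (n-2)|\inf(\beta)| - \min\{0,\sup(\beta)\}$.

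For the equality when $n = 3$, the above inequalities become equalities because every canonical factor in $\Cf \setminus \{e,\delta\}$ has word length exactly one, so $\sum \ell_j = \min(|r|, k)$ and the upper bound collapses to $|\inf(\beta)| - \min\{0,\sup(\beta)\}$. For the matching lower bound, when $\sup(\beta) \geq 0$ Theorem~\ref{thm:inequality} already gives $\nb(\beta) \geq |\inf(\beta)| = |\inf(\beta)| - \min\{0,\sup(\beta)\}$. When $\sup(\beta) < 0$, I would read off the writhe (exponent sum) of $\beta$ from $\LCF(\beta)$ as $r(n-1) + \sum \|A_i\|$; for $n=3$ each $\|A_i\| = 1$, so the writhe equals $2r + k = \inf(\beta) + \sup(\beta)$. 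Any band-generator word for $\beta$ with $p$ positive and $q$ negative bands has exponent sum $p - q$, hence $q \geq -\inf(\beta) - \sup(\beta) = |\inf(\beta)| - \min\{0,\sup(\beta)\}$, which is the desired lower bound.

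The main obstacle I expect is the bookkeeping of Case B: one must verify carefully that the reduction truly terminates after $k$ steps and cannot be coaxed into absorbing further $\delta^{-1}$s into the already-negative $W_j'$s, and that the $\tau$-rotations applied to the surviving positive factors neither change their word lengths nor downgrade the running count. Once the length balance $\|W_j\| + \|V_j\| = n-1$ from Corollary~\ref{cor:complementary} is invoked, the arithmetic falls into place, and the sharpness for $n=3$ is then an immediate consequence of the rigidity of $\Cf$ and the writhe identity.
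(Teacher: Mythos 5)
Your proof is correct, and the upper bound follows the paper's route essentially verbatim: run the reduction of Definition~\ref{def:Red} on $\LCF(\beta)=\delta^{r}A_1\cdots A_k$, use Corollary~\ref{cor:complementary} to replace a chosen factor $W_j$ by $V_j^{-1}$ with $\|V_j\|=(n-1)-\|W_j\|\leq n-2$, and split into the cases $k\geq|r|$ and $k<|r|$, which are exactly the paper's Case~1 ($\sup(\beta)\geq 0$) and Case~2 ($\sup(\beta)<0$). The genuine difference is how you certify the equality for $n=3$. The paper gets the matching lower bound by invoking Corollary~\ref{cor:nb(beta)} (hence the Kang--Ko--Lee shortest-word result, Lemma~\ref{lem:KKL-reduced}, together with Lemma~\ref{lem:nb(beta)}) to assert that $\Red(\LCF(\beta))$ realizes $\nb(\beta)$ exactly, which in Case~2 even yields the identity $\nb(\beta)=(n-1)(r-k)+\|A_1'\|+\cdots+\|A_k'\|$ for all $n$ before specializing to $n=3$. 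You instead use Theorem~\ref{thm:inequality} when $\sup(\beta)\geq 0$ (as the paper also does) and, when $\sup(\beta)<0$, an elementary exponent-sum argument: for $3$-braids the writhe read from $\LCF(\beta)$ is $2\inf(\beta)+k=\inf(\beta)+\sup(\beta)$, so every band-generator word for $\beta$ contains at least $-\inf(\beta)-\sup(\beta)$ negative bands. This makes the $n=3$ equality independent of the shortest-word machinery, at the cost of not recording the exact negative-band count of the reduced word for general $n$ that the paper's argument gives. Two minor points to tighten: for the upper bound you do not need Corollary~\ref{cor:nb(beta)} at all, since the reduced word is a representative of $\beta$ and so its negative-band count bounds $\nb(\beta)$ by definition; and when you invoke Theorem~\ref{thm:inequality} you should note that its hypothesis $\nb(\beta)\geq 1$ is supplied by $\inf(\beta)<0$ via Lemma~\ref{lem<0}.
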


\begin{proof}
Recall Definition~\ref{def:inf-sup} and 
put $r:=-\inf(\beta)=|\inf(\beta)|>0$ and $k:=\ell(\beta)\geq 0$. 
We have $$\sup(\beta)=-r+k.$$
Theorem~\ref{thm:LCF} implies that for some $A_1,\cdots,A_k \in \CFn \setminus \{e, \delta\}$
we have $\LCF(\beta)=\delta^{-r} A_1 \cdots A_k.$ 
By Corollary~\ref{cor:complementary} for each $i=1,\dots, k$ there exists a canonical factor $A_i' \in \CFn \setminus \{e, \delta\} $ such that $A_i A_i'=\delta$.

Recall the inner automorphism $\tau:B_n \to B_n$ defined by $\tau(\beta)=\delta^{-1}\beta\delta$. For a canonical factor $A\in \CFn$, $\tau(A)$ is diagrammatically counterclockwise $2\pi/n$ rotation of $A$ and $\tau(A^{-1})=(\tau(A))^{-1}$. 
Thus $\tau$ preserves the word length; $$||\tau(A)||=||A||=||\tau(A^{-1})||.$$

{\bf Case 1:} 
Suppose $-r+k\geq 0$.
Then $\min\{0, \sup(\beta')\}=0$. 
We apply the reduction operation to $\LCF(\beta)$ and obtain a $\Red(\LCF(\beta))$. By Definition~\ref{def:Red}, among the canonical factors $A_1, \cdots, A_k$ in $\LCF(\beta)$ the ones with $r$ largest word length, say $A_{i_1}, \cdots, A_{i_r}$, are replaced by the negative words ${(A'_{i_1})}^{-1}, \cdots, {(A'_{i_r})}^{-1}$ up to rotation by $\tau$.
The rest of the $k-r$ canonical factors in $\LCF(\beta)$ are kept the same up to rotation by $\tau$.

Since every canonical factor in $\CFn \setminus \{e, \delta\}$ has word length at most $n-2$ and $\tau$ preserves the word lengths of canonical factors (and their inverses), 
the number of negative bands in $\Red(W)$ is at most $(n-2)r$.
This gives $$\nb(\beta) \leq (n-2)r = -(n-2)\inf(\beta).$$ 

When $n=3$ we can say further.
Every canonical factor in $$\Cf\setminus\{e, \delta\}=\{a_1, a_2, a_3\}$$ has word length exactly $1$. This means each of ${(A'_{i_1})}^{-1},$ $\cdots,$ ${(A'_{i_r})}^{-1}$ corresponds to exactly one negative band and we obtain $$\nb(\beta) \leq r= -\inf(\beta).$$
On the other hand, Theorem~\ref{thm:inequality} gives $-\nb(\beta') \leq \inf(\beta')$. Therefore, for 3-braids we obtain $$-\nb(\beta)=\inf(\beta).$$

{\bf Case 2:}
If $-r+k <0$ we have $\min\{0, \sup(\beta)\}=\sup(\beta)$. 
Reduction operation gives: up to rotation by $\tau$ 
$$\Red(\LCF(\beta))=\delta^{-r+k} \ (A'_1)^{-1} \cdots (A'_k)^{-1}.$$ 
Recall Lemma \ref{lem:KKL-reduced} and Lemma~\ref{lem:nb(beta)}, which state that 
$\Red(\LCF(\beta))$ realizes the negative band number $\nb(\beta)$. Thus, we have
\begin{equation*}
    \nb(\beta) 
=||\delta|| (r-k)+ ||A_1'|| + \cdots + ||A_k'||.
\end{equation*}

Since the word length of $\delta$ is $||\delta||=n-1$ and $||A_i'||\leq n-2$, we get an upper bound of $\nb(\beta)$ 
\begin{eqnarray*}
\nb(\beta) 
&\leq& (n-1)(r-k)+(n-2)k \\
&=& (n-2)r + r -k \\
&=& -(n-2)\inf(\beta) - \sup(\beta).
\end{eqnarray*}
For 3-braids, since $||\delta||=2$ and $||A_i'||=1$, we get an explicit formula of $\nb(\beta)$ $$\nb(\beta)= 2(r-k)+k=r-(-r+k)=-\inf(\beta)-\sup(\beta).$$ 
\end{proof}

\section{Relations Between $\inf(\beta)$ and $\nb(\beta)$ for $\beta\in\SSS(\beta)$}\label{Sec:9}

The solution to the shortest word problem for 4-braids was given by Kang-Ko-Lee \cite{KangKoLee}. 
The result is a generalization of Xu's solution to the shortest word problem for 3-braids \cite{Xu}. 

\begin{theorem}
{\em (Kang, Ko, Lee \cite[Theorem 5.2]{KangKoLee})}
\label{thm:KKL-shortest}
Given a $4$-braid $\beta$, let $\beta'$ be an element in $\SSS(\beta)$. 
Then $\Red(\LCF(\beta'))$ gives a shortest word among all the conjugates of $\beta$. 
\end{theorem}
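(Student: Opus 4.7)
The plan is to translate the shortest-word claim into a statement about the negative band number $\nb$, and then to identify super summit elements as the minimizers.

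First, for any $\gamma \in [\beta]$ and any band-word $W$ representing $\gamma$, the exponent sum $p(W) - q(W)$ (where $p, q$ count positive and negative bands) is a conjugacy invariant of $[\beta]$. Hence $||W|| = 2q(W) + (p(W) - q(W))$, and minimizing $||W||$ over all representatives of all conjugates is equivalent to minimizing $q(W)$, whose minimum is $\nb[\beta]$ (Definition~\ref{def-of-m}). By Lemma~\ref{lem:KKL-reduced} combined with Corollary~\ref{cor:nb(beta)}, for each $\gamma$ the word $\Red(\LCF(\gamma))$ is both a shortest word for $\gamma$ and realizes $\nb(\gamma)$ negative bands. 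The theorem therefore reduces to showing $\nb(\beta') = \nb[\beta]$ for every $\beta' \in \SSS(\beta)$.

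To prove this, I would write $\LCF(\beta') = \delta^{r} A_1 \cdots A_k$, where by Proposition~\ref{prop:sss} $r = \inf[\beta]$ and $k = \sup[\beta] - \inf[\beta]$, and compare with $\LCF(\gamma) = \delta^{r'} B_1 \cdots B_{k'}$ for an arbitrary conjugate, which satisfies $r' \leq r$ and $r' + k' \geq r + k$. Unwinding Definition~\ref{def:Red} yields an explicit formula for the negative-band count produced by reduction: a residual $\delta^{-1}$ contributes $||\delta|| = 3$ negative bands, and each canonical factor $A_i$ selected for inversion contributes $||A_i'|| = 3 - ||A_i||$ negative bands (using Corollary~\ref{cor:complementary}). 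In $B_4$, $||A_i|| \in \{1, 2\}$, so the expression for $\nb(\cdot)$ is piecewise linear in $|r|$, $k$, and the number of length-$2$ factors. I would then verify a joint monotonicity: increasing $|r'|$ or $k'$ cannot decrease $\nb$, because a larger $|r'|$ either creates additional residual $\delta^{-1}$'s or forces short factors to be inverted in place of long ones, and a larger $k'$ means more factors must be inverted. Since $\beta'$ attains the extremal pair $(r, k)$, this gives $\nb(\beta') \leq \nb(\gamma)$, and the reverse inequality $\nb[\beta] \leq \nb(\beta')$ is trivial.

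The hardest part will be this monotonicity step, which calls for a case split based on the sign of $\sup(\gamma)$ (controlling whether any $\delta^{-1}$ survives reduction) and on the length distribution among $\{||B_j||\}$ in $\LCF(\gamma)$. A potentially cleaner alternative is to show directly that the cycling and decycling operations appearing in Theorem~\ref{thm:sss} do not increase $||\Red(\LCF(\cdot))||$; since any conjugate can be joined to some element of $\SSS(\beta)$ by a finite sequence of such moves, this would yield $||\Red(\LCF(\beta'))|| \leq ||\Red(\LCF(\gamma))||$ directly along the super summit path, bypassing the fine-grained case analysis.
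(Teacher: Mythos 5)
First, note that the paper offers no proof of this statement at all: it is quoted verbatim from Kang--Ko--Lee \cite[Theorem 5.2]{KangKoLee}, so you are attempting to reprove an imported result. Your opening reduction is sound: since the exponent sum $e$ is constant on the conjugacy class, $||W||=e+2q(W)$, so being shortest among all conjugates is equivalent to realizing $\nb[\beta]$ negative bands, and by Lemma~\ref{lem:KKL-reduced} and Corollary~\ref{cor:nb(beta)} the word $\Red(\LCF(\gamma))$ realizes $\nb(\gamma)$ for each conjugate $\gamma$. So everything hinges on $\nb(\beta')=\nb[\beta]$ for $\beta'\in\SSS(\beta)$ --- which is exactly Lemma~\ref{lem:SSSand m}, a statement the paper \emph{deduces from} this theorem, so it must be proved independently; you correctly do not cite it, but you also do not prove it.

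That is where the genuine gap lies, and the monotonicity heuristic you offer in its place is partly wrong. The number of factors inverted by $\Red$ is $\min\{|r'|,k'\}$ (Definition~\ref{def:Red}), so ``a larger $k'$ means more factors must be inverted'' is false; and monotonicity of the negative-band count in $|r'|$ and $k'$ \emph{separately} fails: with $e=0$, the count for $(|r'|,k')=(2,10)$ is $\max(2,8)=8$ but for $(4,10)$ it is $\max(4,6)=6$, so increasing $|r'|$ at fixed $k'$ can decrease it. The correct comparison is in the variables $(\inf,\sup)$, using that $\inf(\gamma)\le\inf[\beta]$ and $\sup(\gamma)\ge\sup[\beta]$ are realized \emph{simultaneously} by $\beta'$ (Proposition~\ref{prop:sss}). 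The route is in fact completable in $B_4$, but only via a structural fact you treat as an uncontrolled parameter: every factor in $\CFn\setminus\{e,\delta\}$ has length $1$ or $2$, so with $e$ fixed the number of length-$2$ factors in $\LCF(\gamma)$ is forced to be $e-3\inf(\gamma)-\ell(\gamma)$; plugging this into your reduction formula gives that the negative-band count of $\Red(\LCF(\gamma))$ equals $\max\{|\inf(\gamma)|,\ \sup(\gamma)-e\}$ when $\inf(\gamma)<0\le\sup(\gamma)$, equals $-e$ when $\sup(\gamma)<0$, and equals $0$ when $\inf(\gamma)\ge 0$ --- an expression visibly minimized at $(\inf[\beta],\sup[\beta])$, i.e.\ at super summit elements (compare the $n=3$ case of Theorem~\ref{thm:3braid-nb}). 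Since you neither establish this formula nor prove the monotonicity you invoke, and your fallback via cycling/decycling rests on further unproved claims (reachability of $\SSS(\beta)$ by these moves and non-increase of $||\Red(\LCF(\cdot))||$ under them), the proposal as written does not constitute a proof.
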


In this section, we explore the relation between $\inf(\beta)$ and $\nb(\beta)$ for a super summit element $\beta \in \SSS(\beta)$. 

Recall $\nb[\beta]=\min\{\nb(\beta') \mid \beta' \mbox{ is conjugate to } \beta\}$ in Definition~\ref{def-of-m}, the minimal number of negative bands for the conjugacy class of $\beta$. 
In the next lemma, we show that every shortest word realizes $\nb[\beta]$.

\begin{lemma}\label{lem:shortest}
Let $\beta\in B_n$. 
A word $W$ is a shortest word among all the conjugates of $\beta$ if and only if the number of negative bands in $W$ is $\nb[\beta]$.
\end{lemma}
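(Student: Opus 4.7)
The plan is to mimic the proof of Lemma~\ref{lem:nb(beta)}, replacing the single braid $\beta$ by the conjugacy class $[\beta]$. The essential ingredient is that the writhe (exponent sum) of a word in band generators is a conjugacy invariant, so any two words $W, W'$ representing braids in $[\beta]$ satisfy $p - n = p' - n'$, where $p, n$ (resp.\ $p', n'$) denote the number of positive and negative bands in $W$ (resp.\ $W'$).

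For the forward direction ($\Rightarrow$), I would let $W$ be a shortest word among all conjugates of $\beta$, with $p$ positive bands and $n$ negative bands, so that $\|W\| = p+n$. Let $W^{\ast}$ be any word representing any conjugate of $\beta$ that achieves $\nb[\beta]$, with $p^{\ast}$ positive and $n^{\ast}=\nb[\beta]$ negative bands. Then $\|W\|\leq \|W^{\ast}\|$ gives $p+n \leq p^{\ast}+n^{\ast}$, while the writhe invariance gives $p-n = p^{\ast}-n^{\ast}$. Subtracting, $2n \leq 2n^{\ast}$, hence $n\leq \nb[\beta]$. On the other hand, the definition of $\nb[\beta]$ forces $n\geq \nb[\beta]$, and therefore $n=\nb[\beta]$.

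For the reverse direction ($\Leftarrow$), suppose $W$ represents some conjugate of $\beta$ and has exactly $\nb[\beta]$ negative bands. Let $W^{\ast}$ be a shortest word among conjugates of $\beta$, with $p^{\ast}$ positive and $n^{\ast}$ negative bands. By the forward direction just established, $n^{\ast}=\nb[\beta]=n$. Writhe invariance gives $p^{\ast}-n^{\ast}=p-n$, so combined with $n=n^{\ast}$ we obtain $p=p^{\ast}$, hence $\|W\|=p+n=p^{\ast}+n^{\ast}=\|W^{\ast}\|$. Thus $W$ is a shortest word among the conjugates of $\beta$.

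No step looks genuinely hard; the main thing to be careful about is just invoking writhe invariance correctly (the writhe of a band-generator word equals the exponent sum, which is a homomorphism $B_n \to \mathbb{Z}$ and thus constant on conjugacy classes). Given that, the lemma is really the two-line arithmetic argument above applied once in each direction.
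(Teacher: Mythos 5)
Your proof is correct and matches the paper's approach: the paper simply states that the argument of Lemma~\ref{lem:nb(beta)} applies in parallel, and that argument is exactly your combination of minimal length, the definition of $\nb[\beta]$, and conjugacy-invariance of the exponent sum (writhe) to force $n=\nb[\beta]$ and $\|W\|=\|W^{\ast}\|$. Your write-up is in fact slightly more complete, since you spell out both directions of the equivalence explicitly.
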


\begin{proof}
Parallel argument in the proof of Lemma~\ref{lem:nb(beta)} applies. 
\end{proof}

The next lemma claims that super summit elements realize $\nb[\beta]$ for 3- and 4-braids. 

\begin{lemma}\label{lem:SSSand m}
Let $\beta$ be an $n$-braid where $n=3$ or $4$. 
For every super summit element $\beta' \in \SSS(\beta)$, we have
$\nb[\beta]=\nb(\beta')$. 
Moreover, 
a reduced word $\Red(\LCF(\beta'))$ contains $\nb(\beta')$ negative bands. 
\end{lemma}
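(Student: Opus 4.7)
The plan is to combine the shortest-word result of Kang--Ko--Lee (Theorem~\ref{thm:KKL-shortest}) for $4$-braids, its analogue for $3$-braids due to Xu, together with Corollary~\ref{cor:nb(beta)} and Lemma~\ref{lem:shortest} proved just above. The inequality $\nb[\beta]\leq \nb(\beta')$ is trivial because $\beta'$ is conjugate to $\beta$, so the content is to prove the reverse inequality $\nb(\beta')\leq \nb[\beta]$, and the upshot is that the same word $\Red(\LCF(\beta'))$ simultaneously witnesses both invariants.

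First, I would invoke Theorem~\ref{thm:KKL-shortest} (and Xu's earlier result for $n=3$) to conclude that, since $\beta'\in \SSS(\beta)$, the reduced word $W:=\Red(\LCF(\beta'))$ is a shortest word among all conjugates of $\beta$. Next, by Lemma~\ref{lem:shortest}, any shortest word among the conjugates of $\beta$ contains exactly $\nb[\beta]$ negative bands; therefore $W$ has $\nb[\beta]$ negative bands. On the other hand, since $W$ is a word representing the particular braid $\beta'$, Corollary~\ref{cor:nb(beta)} tells us that $W$ achieves the negative band number of $\beta'$, i.e.\ the number of negative bands in $W$ equals $\nb(\beta')$. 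Equating these two counts yields $\nb(\beta')=\nb[\beta]$, which proves the first claim, and the ``moreover'' clause is automatic since $W$ was exhibited as having $\nb(\beta')$ negative bands.

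The assembly is routine once the ingredients are in place, and the only potential obstacle is making sure that Xu's solution of the shortest word problem for $3$-braids is cited in the form needed, namely that $\Red(\LCF(\beta'))$ is a shortest conjugate word whenever $\beta'\in \SSS(\beta)$. This should be straightforward to extract from \cite{Xu} (or alternatively one may observe that the $n=3$ case of Theorem~\ref{thm:KKL-shortest} is itself a special case of the $4$-braid statement after a standard inclusion argument), so no new technical work is required. The restriction $n\in\{3,4\}$ enters only through this appeal to the shortest-word theorem; for larger $n$ the statement would require the analogous fact, which is not claimed here.
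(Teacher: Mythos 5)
Your proposal is correct and follows essentially the same route as the paper's proof: apply Theorem~\ref{thm:KKL-shortest} together with Lemma~\ref{lem:shortest} to see that $\Red(\LCF(\beta'))$ has $\nb[\beta]$ negative bands, then Corollary~\ref{cor:nb(beta)} to see it has $\nb(\beta')$ negative bands, and equate. You are in fact slightly more careful than the paper about the $n=3$ case, where one must cite Xu's solution of the shortest word problem for $3$-braids rather than the $4$-braid statement of Theorem~\ref{thm:KKL-shortest}.
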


\begin{proof}
Let $\beta'$ be a super summit element in $\SSS(\beta)$. 
By Theorem~\ref{thm:KKL-shortest} and Lemma~\ref{lem:shortest}, a reduced word $\Red(\LCF(\beta'))$ contains $\nb[\beta]$ negative bands. 

Since $\Red(\LCF(\beta'))=\beta'$ as braids, Corollary~\ref{cor:nb(beta)} implies that $\Red(\LCF(\beta'))$ contains $\nb(\beta')$ negative bands. 

Therefore, $\nb[\beta]=\nb(\beta')$.
\end{proof}

The next theorem shows that the converse direction holds for Corollary~ \ref{cor:SSS-ASQP} in the strictly almost strongly quasipositive case if we add an additional condition.

\begin{theorem}\label{thm:strictlyasqp}
A braid $\beta\in B_n$ with $n\leq 4$ $($see Remark~\ref{rem:n} below$)$
is conjugate to a strictly almost strongly quasipositive braid if and only if every element $\beta'\in \SSS(\beta)$ has $\inf(\beta')=-1$ and $\LCF(\beta')$ contains a canonical factor of word length $n-2$.
\end{theorem}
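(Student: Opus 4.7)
\medskip

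\noindent\textbf{Proof proposal for Theorem~\ref{thm:strictlyasqp}.}

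The plan is to exploit the reduction operation together with the identities relating $\nb(\beta)$, $\inf(\beta)$, and canonical factor word lengths, and then to use Lemma~\ref{lem:SSSand m} for the only-if direction.

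For the if-direction ($\Leftarrow$), which should go through for all $n$, pick any super summit element $\beta' \in \SSS(\beta)$. By hypothesis $\inf(\beta') = -1$, so $\LCF(\beta') = \delta^{-1} A_1 \cdots A_k$, and some $A_j$ has $||A_j|| = n-2$. Since $n-2$ is the maximal word length among nontrivial canonical factors in $\CFn\setminus\{e,\delta\}$, we may choose $A_j$ as the factor of maximal length in the reduction step (Definition~\ref{def:Red}). By Corollary~\ref{cor:complementary}, there is a canonical factor $A_j'$ with $A_j A_j' = \delta$, and then $||A_j'|| = ||\delta|| - ||A_j|| = (n-1) - (n-2) = 1$, so $A_j'$ is a single positive band. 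A single application of $\red$ yields
\[
\Red(\LCF(\beta')) = \tau(A_1) \cdots \tau(A_{j-1}) (A_j')^{-1} A_{j+1} \cdots A_k,
\]
which contains exactly one negative band. Hence $\beta'$ is ASQP with $\nb(\beta') \le 1$; combined with $\inf(\beta') = -1 < 0$ and Theorem~\ref{Thm:SQ}, $\beta'$ is not SQP, and so $\beta'$ is strictly ASQP. Since $\beta'$ is conjugate to $\beta$, this completes the if-direction.

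For the only-if direction ($\Rightarrow$), assume $n \leq 4$ and that $\beta$ is conjugate to a strictly ASQP braid, so $\nb[\beta] = 1$. Let $\beta' \in \SSS(\beta)$. By Corollary~\ref{cor:SSS-ASQP} we have $\inf(\beta') \geq -1$; but if $\inf(\beta') \geq 0$ then Theorem~\ref{Thm:SQ} would make $\beta'$ (and hence $\beta$) conjugate to an SQP braid with $\nb[\beta] = 0$, contradicting strict ASQP. Thus $\inf(\beta') = -1$, and $\LCF(\beta') = \delta^{-1} A_1 \cdots A_k$ for some $A_i \in \CFn \setminus \{e,\delta\}$. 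It remains to show some $A_i$ has word length $n-2$. Apply the reduction $\red$ once, choosing a factor $A_j$ of maximal word length; this produces the word
\[
\Red(\LCF(\beta')) = \tau(A_1)\cdots \tau(A_{j-1}) (A_j')^{-1} A_{j+1}\cdots A_k,
\]
in which the only negative bands come from $(A_j')^{-1}$, contributing $||A_j'|| = (n-1) - ||A_j||$ of them. By Lemma~\ref{lem:SSSand m} (which uses $n \leq 4$) and Corollary~\ref{cor:nb(beta)}, this reduced word contains exactly $\nb[\beta] = 1$ negative band. Therefore $||A_j'|| = 1$, which forces $||A_j|| = n-2$, so $\LCF(\beta')$ contains a canonical factor of word length $n-2$, as required.

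The main obstacle, and the reason the only-if direction requires $n \leq 4$, is the need for a super summit element to realize the conjugacy-class minimum $\nb[\beta]$; this is precisely Lemma~\ref{lem:SSSand m}, whose proof relies on Theorem~\ref{thm:KKL-shortest} (Kang--Ko--Lee's shortest word theorem), which is known only for $n \leq 4$. One also has to be careful that the choice of maximal-length $A_j$ in $\red$ is legitimate and that no further negative bands are introduced by the $\tau$-rotations, but $\tau$ preserves word length of canonical factors so this is automatic.
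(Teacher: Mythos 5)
Your proposal is correct and follows the same skeleton as the paper's proof: the reduction operation applied to the maximal-length factor of $\LCF(\beta')$, Lemma~\ref{lem:KKL-reduced}, Lemma~\ref{lem:SSSand m} (the only place $n\leq 4$ enters), Corollary~\ref{cor:complementary}, and Theorem~\ref{Thm:SQ} to rule out the SQP case. The one genuine difference is how you pass between $||A_i||=n-2$ and $||B_i||=1$ for the complementary factor $A_iB_i=\delta$: the paper proves this by a diagrammatic case analysis (its ``Claim'': $\A_i$ a single $(n-1)$-gon, or two polygons joined by the edge $\B_i$, with a separate argument that three or more components are impossible), whereas you simply invoke additivity of band-generator word length, $||A_i||+||B_i||=||\delta||=n-1$. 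That shortcut is legitimate --- the Birman--Ko--Lee relations are homogeneous, so the length of a positive word is an invariant of the positive braid (equal to its writhe) and is additive --- and it lets you bypass the component-count analysis entirely; the paper's route has the mild advantage of staying within the diagrammatic framework it has built and of exhibiting what the factor of length $n-2$ looks like (Figure~\ref{fig:StrictlyASQP}). Two smaller remarks: you rederive $\inf(\beta')=-1$ from the weak clause of Corollary~\ref{cor:SSS-ASQP} plus Theorem~\ref{Thm:SQ} rather than quoting its ``strictly'' clause, which is equivalent; and your opening inference ``conjugate to a strictly ASQP braid, so $\nb[\beta]=1$'' is exactly the reading the paper itself uses in its proof, so you inherit, rather than introduce, whatever imprecision lies in interpreting the hypothesis at the level of the conjugacy class.
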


\begin{remark}\label{rem:n}
In the proof below, we note that the restriction on the braid index $n=3, 4$ is only required for the only-if ($\Rightarrow$) direction in which we use Lemma~\ref{lem:SSSand m} that is only proved for $n=3, 4$ at this writing.
The statement of the if-direction ($\Leftarrow$) holds for general $n$. 
\end{remark}

\begin{proof}
$(\Rightarrow)$ 
Suppose that $\beta$ is conjugate to a strictly almost strongly quasipositive braid. By Corollary~\ref{cor:SSS-ASQP} every super summit element $\beta'\in\SSS(\beta)$ has $\inf(\beta')=-1$. 
Denote $\LCF(\beta')=\delta^{-1} A_1 \cdots A_k$. 
Suppose that $A_i$ achieves the maximal word length among $A_1, \cdots, A_k$. 
Let $B_i \in \CF\setminus \{e, \delta\}$ be the canonical factor satisfying $A_i\diamond B_i=A_iB_i=\delta$ whose existence is guaranteed by Corollary~\ref{cor:complementary}. 
We apply the reduction operation (Definition~\ref{def:Red}) with respect to the $A_i$.  
We get 
\begin{eqnarray*}
\Red(\LCF(\beta'))&=& \delta^{-1+1} \tau(A_1)\cdots\tau(A_{i-1})(B_i)^{-1}A_{i+1}\cdots A_k\\
&=& \tau(A_1)\cdots\tau(A_{i-1})(B_i)^{-1}A_{i+1}\cdots A_k.
\end{eqnarray*}
By Lemma~\ref{lem:KKL-reduced}, 
$\Red(\LCF(\beta'))$ gives a shortest word representing $\beta'$.
By Lemma~\ref{lem:SSSand m}, the word $\Red(\LCF(\beta'))$ realizes the number $\nb(\beta')=\nb[\beta]$. 
Since $\beta$ is conjugate to a strictly ASQP braid, $\nb[\beta]=1$. 
The negative exponent term $(B_i)^{-1}$ in the reduced word $\Red(\LCF(\beta'))$ has word length $||(B_i)^{-1}||=||B_i||=1$ and contributes the unique negative band to the ASQP braid. 

The diagram $\B_i$ of the band generator $B_i$ is an edge connecting vertices of $D_n$. 
If $\B_i$ joins consecutive vertices then the diagram $\A_i$ has one connected component and $\A_i$ is an $n-1$ gon. 
Otherwise, $\A_i$ has two components, a $k$-gon and an $n-k$-gon for some $k$ joined by the edge $\B_i$.  
For both cases the word length of $A_i$ is $||A_i||=n-2$ (See Figure~\ref{fig:StrictlyASQP}).

\begin{figure}[h]
 \centering
\includegraphics[width=10cm]{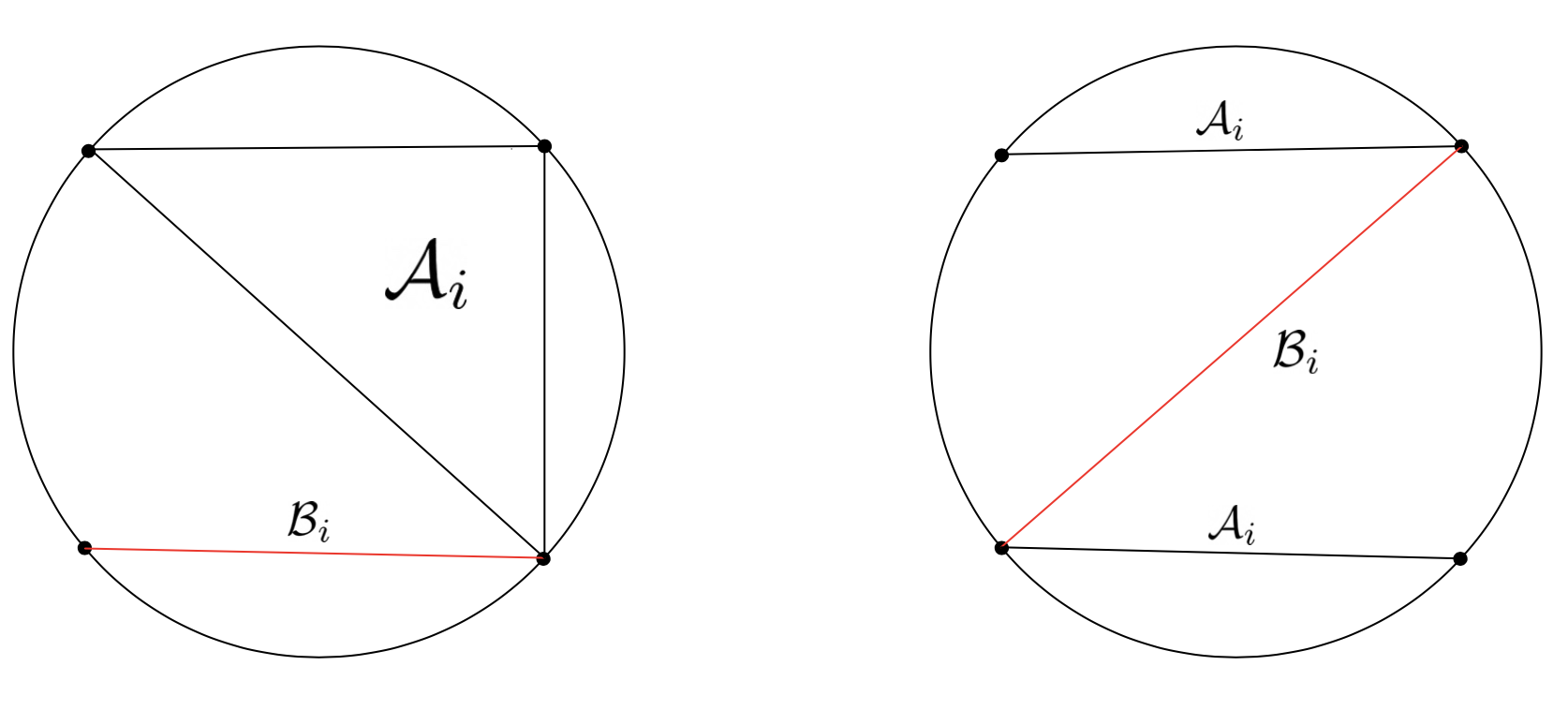}
 \caption{On the left figure, $A_i=a_3a_2$ has one connected component. On the right figure, $A_i=a_1a_3$ has two components. In both cases, the word length of $A_i$ is $||A_i||=n-2=2$.}
\label{fig:StrictlyASQP}
\end{figure}

$(\Leftarrow)$
Suppose that $\beta\in B_n$ 
and that every element $\beta'\in\SSS(\beta)$ has $\inf(\beta')=-1$ and $\LCF(\beta')=\delta^{-1}A_1 \cdots A_k$ contains some factor $A_i\in\CFn$ of word length $||A_i||=n-2$ for some $i$.  
By Corollary~\ref{cor:complementary} there is a canonical factor $B_i \in \CFn\setminus \{e, \delta\}$ such that $A_i \diamond B_i=A_iB_i=\delta$.

\begin{claim} We claim that $||B_i||=1.$    
\end{claim}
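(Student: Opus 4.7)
The plan is to exploit word-length additivity in the positive braid monoid $B_n^+$. The key observation is that every defining relation in the band-generator presentation (Eq.~\eqref{eq:gp presentation of Bn}) equates two words of equal length, so the natural length function $\|\cdot\|\colon B_n^+ \to \mathbb{Z}_{\geq 0}$ obtained by counting letters in any band-generator word descends to a well-defined, additive function on the positive monoid: $\|\alpha\beta\|=\|\alpha\|+\|\beta\|$ for all $\alpha,\beta\in B_n^+$.

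With this in hand, I would simply plug in. Since $A_i, B_i \in \CFn \subset B_n^+$ and $A_iB_i = A_i\diamond B_i = \delta$ by Proposition~\ref{prop:list of properties of diamond}(a), additivity yields
\[
\|A_i\| + \|B_i\| \;=\; \|\delta\| \;=\; n-1,
\]
using that $\delta = a_{n-1,n}a_{n-2,n-1}\cdots a_{12}$ has length $n-1$. The hypothesis $\|A_i\|=n-2$ then forces $\|B_i\|=1$, as required.

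A second, diagrammatic, approach is also available and is more in keeping with the paper's geometric flavor (cf.\ Figure~\ref{fig:StrictlyASQP}). A canonical factor of word length $n-2$ has a non-crossing partition diagram that is either a single $(n-1)$-gon omitting exactly one of the $n$ punctures, or a disjoint union of two polygons whose vertex sets together partition $\{P_1,\dots,P_n\}$. To realize the full $n$-gon $\delta$ as $\A_i \diamond \B_i$, the diagram $\B_i$ must contribute exactly one edge in either case: the edge joining the missing vertex to $\A_i$ in the first case, and the joining edge between the two disjoint components in the second.

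The main obstacle, such as it is, is verifying the well-definedness and additivity of $\|\cdot\|$ on $B_n^+$; but this amounts to inspecting the two defining relations in Eq.~\eqref{eq:gp presentation of Bn} and noting that both sides of each relation have exactly two letters. Consequently the claim should follow essentially immediately from the structure already developed, and either of the two approaches above can be written up in a few lines.
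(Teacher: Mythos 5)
Your primary argument is correct, and it takes a genuinely different (and shorter) route than the paper. The paper proves the claim by a case analysis on the number of connected components of the diagram $\A_i$: one component forces $\A_i$ to be an $(n-1)$-gon and $\B_i$ the single closing edge; two components of sizes $x$ and $y$ give $x+y=n$, so by Proposition~\ref{prop:property of ast} the only way to reach the $n$-gon $\delta$ is to adjoin the single joining edge; and three or more components are ruled out because they would require $x+y+z=n+1$ vertices. Your approach replaces all of this with additivity of word length on positive braids: $\|A_i\|+\|B_i\|=\|\delta\|=n-1$, hence $\|B_i\|=1$. This buys brevity and works uniformly in $n$, whereas the paper's argument additionally identifies $\B_i$ geometrically as the closing or joining edge (information the rest of the proof does not actually need, since only $\|B_i\|=1$ is used afterward). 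One small point of rigor: inspecting the defining relations of Eq.~\eqref{eq:gp presentation of Bn} only shows that length is well defined on the abstractly presented monoid; to conclude that any two \emph{positive} words equal in the group $B_n$ have the same length (which is what you need for $A_iB_i=\delta$), either invoke the Birman--Ko--Lee embedding of the monoid into the group, or, more simply, note that all band generators are conjugate, so the exponent-sum homomorphism $B_n\to\mathbb{Z}$ sends every $a_{ij}$ to $1$ and restricts to the length on positive words; with that fix the argument is complete. Your secondary diagrammatic sketch is essentially the paper's proof (its Cases 1 and 2), with the three-component case excluded implicitly by the block count of the non-crossing partition.
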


\begin{proof}[proof of the claim]
(Case 1) If the diagram $\A_i$ consists of a single connected component, then it is an $(n-1)$-gon. Since $A_i \diamond B_i = \delta$, the factor $B_i$ is a single band generator. Thus $||B_i||=1$.

(Case 2) 
If $\A_i$ has two connected components, say an $x$-gon $\X$ and a $y$-gon $\Y$ (i.e., $A_i=X \sqcup Y$), then $$n-2=||A_i||=||X||+||Y||=(x-1)+(y-1).$$ We get $x+y=n$. 
By Proposition~\ref{prop:property of ast} we know that $\X \ast \Y$ is an $(x+y)$-gon. On the other hand, $\delta$ is an $n$-gon. 
The only possibility is that the diagram $\B_i$ is the facing edge of $\X$ and $\Y$ so that  $\delta=A_i \diamond B_i= X \diamond (Y \diamond B_i)$. Thus $||B_i||=1$.   

(Case 3) 
Assume that the diagram $\A_i$ consists of three disjoint polygons $\X, \Y, \Z$. Suppose that $\X$ has $x$ sides, $\Y$ has $y$ sides and $\Z$ has $z$ sides. 
Since $||A_i||=n-2$, by the disjointness of $\X, \Y, \Z$, we have $n-2 = (x-1)+(y-1)+(z-1)$. 
We obtain $x+y+z=n+1.$ 
We may assume that $\X$ and $\Y$ are facing to each other (cf Definition~\ref{def:facing}) so that we can define $X \ast Y$. 

Apply Corollary~\ref{cor:A'} for the pair $A=X\sqcup Y\sqcup Z$ and $\delta$. 
Then we have $A \prec (X \ast Y) \ast Z \prec \delta$.  
By Proposition~\ref{prop:property of ast} we know the diagram $\X \ast \Y$ is an $(x+y)$-gon, and $(\X \ast \Y) \ast \Z$ is an $(x+y+z)$-gon. 
Since $x+y+z=n+1$ and $\delta$ is an $n$-gon, it is impossible that $\ch((\X \ast \Y) \ast \Z) \subset \ch(\delta)$. 
Therefore, $\A_i$ cannot have more than two components. 
\end{proof}

We continue the proof of the theorem. We have    
$$
\Red(\LCF(\beta'))=\tau(A_1)\cdots\tau(A_{i-1})(B_i)^{-1}A_{i+1}\cdots A_k.    
$$
Since $||B_i||=1$ this shows that $\nb[\beta]\leq 1$. 
If $\nb[\beta]=0$ then $\beta$ is conjugate to a SQP braid. 
By Corollary~\ref{cor:sqpconj} we obtain $\inf(\beta')=\inf[\beta] \geq 0$. This contradicts our assumption that $\inf(\beta')=-1$. Thus, $\nb[\beta]=1$. Therefore, $\beta$ is conjugate to a strictly almost strongly quasipositive braid.
\end{proof}

Recall the definition of $\inf[\beta]$ and $\sup[\beta]$ in Definition~\ref{def:inf[beta]} and that they are achieved by a super summit element simultaneously.  
The next two theorems follow from Proposition~\ref{prop:sss}, Theorem~\ref{thm:3braid-nb} and Lemma~\ref{lem:SSSand m}.

\begin{theorem}
Let $\beta$ be a 3-braid with $\inf[\beta]<0$. Then 
$$\nb[\beta]=|\inf[\beta]| - \min\{0, \sup[\beta]\}.$$ 
\end{theorem}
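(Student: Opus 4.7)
The plan is to assemble the theorem from three results already established in the excerpt: Proposition~\ref{prop:sss}, the $n=3$ case of Theorem~\ref{thm:3braid-nb}, and Lemma~\ref{lem:SSSand m}. The strategy is to pass from the conjugacy-class invariants $\inf[\beta], \sup[\beta], \nb[\beta]$ to the word-invariants $\inf(\beta'), \sup(\beta'), \nb(\beta')$ for a single carefully chosen representative $\beta'$, apply the 3-braid equality to that representative, then translate back.

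First, I would pick any super summit element $\beta'\in\SSS(\beta)$; such an element exists because $\SSS(\beta)$ is nonempty. By Proposition~\ref{prop:sss}, this single element simultaneously attains the extremes of $\inf$ and $\sup$ across the conjugacy class, so $\inf(\beta')=\inf[\beta]$ and $\sup(\beta')=\sup[\beta]$. In particular, the hypothesis $\inf[\beta]<0$ passes to $\inf(\beta')<0$, which is exactly the hypothesis needed to invoke the $n=3$ equality in Theorem~\ref{thm:3braid-nb}. That theorem then gives
\begin{equation*}
\nb(\beta') \;=\; |\inf(\beta')| - \min\{0,\sup(\beta')\} \;=\; |\inf[\beta]| - \min\{0,\sup[\beta]\}.
\end{equation*}

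Next I would convert $\nb(\beta')$ into $\nb[\beta]$. This is precisely what Lemma~\ref{lem:SSSand m} accomplishes for $n=3$: every super summit element of a 3-braid realizes the minimal negative band number of its conjugacy class, i.e.\ $\nb[\beta]=\nb(\beta')$. Substituting this identity into the displayed equation above yields the desired formula
\begin{equation*}
\nb[\beta] \;=\; |\inf[\beta]| - \min\{0,\sup[\beta]\}.
\end{equation*}

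There is no real obstacle here; the genuinely substantive content was already dispatched in Theorem~\ref{thm:3braid-nb} (which uses the reduction operation and word-length bookkeeping for $\delta$ and for canonical factors of $B_3$) and in Lemma~\ref{lem:SSSand m} (which, via Kang--Ko--Lee's shortest word theorem, identifies $\nb[\beta]$ with $\nb$ of any super summit element). The only point requiring any care is ensuring that the same representative $\beta'$ is used for all three invariants at once, and Proposition~\ref{prop:sss} guarantees exactly this simultaneity. Thus the proof is essentially a one-line composition of these three results.
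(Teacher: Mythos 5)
Your proof is correct and follows exactly the route the paper intends: the paper derives this theorem precisely by combining Proposition~\ref{prop:sss} (a super summit element simultaneously realizes $\inf[\beta]$ and $\sup[\beta]$), the $n=3$ equality of Theorem~\ref{thm:3braid-nb}, and Lemma~\ref{lem:SSSand m} ($\nb[\beta]=\nb(\beta')$ for a super summit element). Your attention to using one and the same representative $\beta'$ for all three invariants is exactly the point that makes the composition work.
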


\begin{theorem}
Let $K$ be a knot or link in $S^3$ of braid index $n$. 
Let $\beta\in B_n$ be a braid representative of $K$. 
The following holds:
\begin{itemize}
\item
If $\inf[\beta]\geq 0$ then ${\mathcal D}(K) = \nb(K)=0$. 
\item
If $\inf[\beta]<0$ then
$$
|\inf[\beta]| \leq  \nb[\beta] \leq (n-2) |\inf[\beta]| - \min\{0, \sup[\beta]\}. 
$$
\end{itemize}
\end{theorem}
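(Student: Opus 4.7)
The statement is essentially a reformulation of Theorems~\ref{Thm:SQ}, \ref{thm:inequality} and~\ref{thm:3braid-nb} in terms of the conjugacy-class invariants $\inf[\beta]$, $\sup[\beta]$ and $\nb[\beta]$, so my plan is to extract the bounds by evaluating each theorem on a carefully chosen representative of the conjugacy class $[\beta]$. The main tool is Proposition~\ref{prop:sss}, which says that any super summit element $\beta'\in\SSS(\beta)$ simultaneously realizes $\inf(\beta')=\inf[\beta]$ and $\sup(\beta')=\sup[\beta]$.

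For the first bullet, I would pick any $\beta'\in\SSS(\beta)$. Then $\inf(\beta')=\inf[\beta]\geq 0$, so Theorem~\ref{Thm:SQ} says $\beta'$ is strongly quasipositive; in particular $\nb[\beta]\leq\nb(\beta')=0$, and therefore also $\nb(K)=0$. The defect $\D(K)=\frac{1}{2}(2g(K)-1-\SL(K))$ is a nonnegative integer by Bennequin's inequality $\SL(K)\leq 2g(K)-1$, and the text records $\D(K)\leq\nb(K)$, so $\D(K)=\nb(K)=0$.

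For the second bullet, the upper bound falls out by picking $\beta'\in\SSS(\beta)$ again, so that $\inf(\beta')=\inf[\beta]<0$ and $\sup(\beta')=\sup[\beta]$. The upper bound half of Theorem~\ref{thm:3braid-nb} holds for arbitrary $n$, giving
\[
\nb(\beta')\leq (n-2)|\inf(\beta')|-\min\{0,\sup(\beta')\}=(n-2)|\inf[\beta]|-\min\{0,\sup[\beta]\},
\]
and since $\beta'$ is conjugate to $\beta$ we have $\nb[\beta]\leq\nb(\beta')$.

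For the lower bound, the subtle point is that evaluating $|\inf|\leq\nb$ on a super summit element only yields $|\inf[\beta]|\leq\nb(\beta')$, which is the \emph{wrong} direction (since $\nb[\beta]\leq\nb(\beta')$). I would therefore switch representatives: choose a conjugate $\beta''$ of $\beta$ that realizes $\nb(\beta'')=\nb[\beta]$. Since $\inf[\beta]<0$, Corollary~\ref{cor:sqpconj} forbids $\beta$ from being conjugate to an SQP braid, so $\nb[\beta]\geq 1$, which puts $\beta''$ in the hypothesis of Theorem~\ref{thm:inequality} and gives $|\inf(\beta'')|\leq\nb(\beta'')=\nb[\beta]$. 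Finally, by the definition of $\inf[\beta]$ as a maximum, $\inf(\beta'')\leq\inf[\beta]<0$, so negating flips the inequality to $|\inf[\beta]|\leq|\inf(\beta'')|\leq\nb[\beta]$, completing the proof.

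The only thing to watch is this sign flip in the lower bound, and the observation that super summit elements must be used for the upper bound and minimizers of $\nb$ for the lower bound, since in general these need not coincide.
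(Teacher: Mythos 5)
Your proof is correct. The paper itself gives no written argument for this theorem beyond the one-line remark that it "follows from Proposition~\ref{prop:sss}, Theorem~\ref{thm:3braid-nb} and Lemma~\ref{lem:SSSand m}", so the comparison is with that intended route. Your treatment of the first bullet and of the upper bound in the second bullet is exactly what the paper intends: evaluate Theorem~\ref{Thm:SQ}, respectively Theorem~\ref{thm:3braid-nb}, on a super summit element $\beta'$, using Proposition~\ref{prop:sss} to convert $\inf(\beta'),\sup(\beta')$ into $\inf[\beta],\sup[\beta]$, and then pass to $\nb[\beta]\leq\nb(\beta')$ and $\nb(K)\leq\nb(\beta')$, with $0\leq\D(K)\leq\nb(K)$ closing the first bullet. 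Where you genuinely diverge is the lower bound $|\inf[\beta]|\leq\nb[\beta]$: the paper's cited ingredient Lemma~\ref{lem:SSSand m} ($\nb[\beta]=\nb(\beta')$ for super summit $\beta'$) would give this bound immediately via Theorem~\ref{thm:inequality}, but that lemma is only established for $n=3,4$ (it rests on the Kang--Ko--Lee shortest word theorem), whereas the theorem is stated for arbitrary braid index $n$. Your alternative — switch to a conjugate $\beta''$ realizing $\nb[\beta]$, rule out $\nb[\beta]=0$ via Corollary~\ref{cor:sqpconj}, apply Theorem~\ref{thm:inequality} to $\beta''$, and then use $\inf(\beta'')\leq\inf[\beta]<0$ to flip the inequality to $|\inf[\beta]|\leq|\inf(\beta'')|\leq\nb[\beta]$ — is correct, and it has the virtue of working for all $n$, so it actually supplies a cleaner justification of the general-$n$ lower bound than the paper's citation suggests. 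Your closing remark about why super summit elements serve the upper bound while $\nb$-minimizers serve the lower bound is exactly the right point of care.
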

\section{Fractional Dehn Twist Coefficient}\label{sec:fdtc}
In this section we apply the dual Garside structure to compute the fractional Dehn twist coefficient (denoted FDTC) of a braid. 

The FDTC is a $\mathbb Q$-valued map $c: {\tt MCG}(S) \to \mathbb Q$ from the mapping class group of a surface $S$. 
See Honda, Kazez and Mati\'c's paper \cite{HondaKazezMatic} for the definition. 
Intuitively, this invariant quantifies how much twisting a diffeomorphism $\phi:S\to S$ possesses near a boundary component.

We will review some of the important properties of the FDTC.

\begin{proposition}\label{FDTC_properties} 
\cite{HondaKazezMatic}, \cite{ItoKawamuroEssential}
Let $C$ be a boundary component of $S$ and $\phi \in {\tt Aut}(S,\partial S)$. We have:
\begin{itemize}
    \item $c(\phi^n, C)= n \cdot c(\phi, C)$.
    \item $c(T_C, C)= 1$ and $c(\phi \circ T_C, C) = c(T_C \circ \phi , C)= 1+ c(\phi, C)$.
    \item $c(\phi, C)= c(\psi \circ \phi \circ \psi^{-1}, C)$ for any $\psi \in {\tt Aut}(S, \partial S)$.
\end{itemize}
\end{proposition}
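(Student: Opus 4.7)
The plan is to derive all three properties from the standard definition of the FDTC as the translation number of a canonical boundary lift. Given $\phi \in \mathrm{Aut}(S, \partial S)$, I would choose a representative in the isotopy class (for instance the Nielsen--Thurston canonical form) whose restriction to a collar of $C$ is a rigid rotation. Lifting $\phi|_C : C \to C$ to the universal cover $\widetilde{C} \cong \mathbb{R}$ yields a homeomorphism $\widetilde{\phi|_C}$ commuting with integer translation, and the FDTC is given by
\[
c(\phi, C) = \lim_{k \to \infty} \frac{(\widetilde{\phi|_C})^k(x) - x}{k}
\]
for any $x \in \mathbb{R}$.

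With this formula in hand, the first bullet follows immediately from the homogeneity of the limit: the canonical lift of $\phi^n$ is $(\widetilde{\phi|_C})^n$, so $c(\phi^n, C) = n \cdot c(\phi, C)$. For the second bullet, I realize $T_C$ as the identity off a small collar of $C$ and as a single full rotation on $C$; its canonical boundary lift is then the translation $x \mapsto x+1$, giving $c(T_C, C) = 1$. Since this lift is central (it commutes with every lift of a boundary-fixing diffeomorphism), the lifts of $\phi \circ T_C$ and $T_C \circ \phi$ both equal $\widetilde{\phi|_C}(x) + 1 = \widetilde{\phi|_C}(x+1)$, and the limit definition evaluates to $c(\phi, C) + 1$. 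For the third bullet, the restriction $(\psi \circ \phi \circ \psi^{-1})|_C$ lifts to the conjugate $\widetilde{\psi|_C} \circ \widetilde{\phi|_C} \circ \widetilde{\psi|_C}^{-1}$, and the conjugating factors telescope inside the iteration $k$ in the limit, so the translation number is unchanged.

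The main obstacle, and really the only nontrivial point, is verifying that $c(\phi, C)$ is independent of the choice of representative within the isotopy class of $\phi$, so that the representative-level identities above descend to well-defined statements on the mapping class group. This requires invoking the Nielsen--Thurston theory as developed in \cite{HondaKazezMatic}, or equivalently the quasimorphism estimates on translation numbers as in \cite{ItoKawamuroEssential}. Once that foundation is in place, the three listed properties become elementary consequences of standard translation-number arithmetic, and I would simply cite \cite{HondaKazezMatic, ItoKawamuroEssential} for the details rather than reproduce them.
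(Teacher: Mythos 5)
The paper does not prove this proposition at all: it is stated as a review of known facts and attributed to \cite{HondaKazezMatic} and \cite{ItoKawamuroEssential}, so there is no in-paper argument to compare against. Your sketch follows the standard route from those references (FDTC as a translation number of a canonical boundary lift, then translation-number arithmetic), and the three bullets do fall out as you describe once the foundational point you flag --- independence of the representative, i.e.\ well-definedness of the canonical lift --- is granted. One imprecision worth fixing if you were to write this out: since $\phi \in {\tt Aut}(S,\partial S)$ fixes $\partial S$ pointwise, the restriction $\phi|_C$ is the identity, so ``lifting $\phi|_C$'' has no content by itself; the rotation is carried by the \emph{free} isotopy from $\phi$ to its Nielsen--Thurston representative, and it is this isotopy that selects the lift of the (rigid-rotation) boundary restriction --- without that normalization the translation number is only defined modulo $\mathbb{Z}$. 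With that correction, your argument for homogeneity, for $c(T_C,C)=1$ and additivity under the central twist $T_C$ (a homogeneous quasimorphism is additive on commuting elements, and $T_C$ commutes with every boundary-fixing class since it is a twist about a boundary-parallel curve), and for conjugacy invariance is the standard one, and deferring the well-definedness to the cited papers is exactly what the authors themselves do.
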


To formalize the concept of twisting with respect to a boundary component, it is necessary to compare two arcs $\gamma$ and $\eta$ that share the same starting point $p\in C\subset \partial S$ with respect to the boundary component $C$. Informally, we want to define an order on arcs such that $\gamma \leq \delta$ if $\delta$ veers to the right of $\gamma$ when the chosen representatives realize the geometric intersection number. 

\begin{definition} \cite{HondaKazezMatic}
Let $\gamma, \eta \subset S$ be two distinct oriented arcs that start on the same point $p\in C\subset \partial S$. Isotope $\gamma$ and $\eta$ such that they minimally intersect transversely. Consider the tangent vectors of the arcs $\dot{\gamma}(0)$ and $\dot{\eta}(0)$. Then $\eta$ is to the right of $\gamma$, denoted $\gamma \leq \eta$, if the oriented basis $<\dot{\eta}(0), \dot{\gamma}(0)>$ agrees with the orientation on $S$. Equivalently, we pass to the universal cover $\tilde S$. Since we have isotoped $\gamma, \eta$ to minimally intersect in the universal cover, these will only intersect at $\tilde p$. Then $\eta$ is to the right of $\gamma$ if the interior of $\tilde{\eta}$ is in the region to the right of $\tilde{\gamma}$. 
\end{definition}

The following propositions are crucial when it comes to computing the FDTC. 

\begin{lemma}\label{Key}\cite{ItoKawamuroEssential}
Let $C\subset \partial S$ be a boundary component of $S$ and $\phi \in {\tt Aut}(S,\partial S)$. If there exists an essential arc $\gamma \subset S$ that starts on $C$ such that
$$ T^m (\gamma) \leq \phi^N(\gamma) \leq T^M(\gamma)$$
for some $m, N,  M\in \mathbb Z$ then $m/N \leq c(\phi, C)\leq M/N.$
\end{lemma}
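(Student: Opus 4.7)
The plan is to iterate the hypothesis by a factor of $k$, then extract the FDTC as a translation-number limit in the universal cover.

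First, I will record two facts. (i) The order $\leq$ on arcs at a fixed basepoint is preserved by any orientation-preserving self-homeomorphism of $S$ that fixes the basepoint; in particular both $\phi$ and $T_C$ preserve it. (ii) Since any $\phi \in {\tt Aut}(S,\partial S)$ may be represented by a diffeomorphism equal to the identity on a collar of $C$ where $T_C$ is supported, the mapping classes of $\phi$ and $T_C$ commute: $\phi^a T^b = T^b \phi^a$ for all integers $a,b$.

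Next, I bootstrap the hypothesis. Applying $\phi^N$ to $T^m(\gamma) \leq \phi^N(\gamma) \leq T^M(\gamma)$ and using commutativity gives $T^m \phi^N(\gamma) \leq \phi^{2N}(\gamma) \leq T^M \phi^N(\gamma)$. Applying $T^m$ (resp.\ $T^M$) to the original inequality gives $T^{2m}(\gamma) \leq T^m \phi^N(\gamma)$ and $T^M \phi^N(\gamma) \leq T^{2M}(\gamma)$. Concatenating and inducting on $k$ yields
\[
T^{km}(\gamma) \;\leq\; \phi^{kN}(\gamma) \;\leq\; T^{kM}(\gamma) \qquad \text{for every } k \geq 1.
\]

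Finally, I pass to the universal cover $\widetilde S$. The boundary component $C$ lifts to a line $\widetilde C$ on which $T_C$ lifts to a deck transformation $\widetilde T$ shifting by one period. Pick a lift $\widetilde \gamma$ of $\gamma$ starting at $\widetilde p \in \widetilde C$, and the canonical lift of $\phi$ fixing $\widetilde C$ pointwise. The order $\leq$ on arcs based at $p$ corresponds in $\widetilde S$ to a linear order on endpoints of lifts starting at $\widetilde p$; in these terms, the lift of $T^j(\gamma)$ from $\widetilde p$ ends at $\widetilde T^j$ applied to the endpoint of $\widetilde \gamma$. Let $d_k \in \mathbb{Z}$ record this translation index for the lift of $\phi^{kN}(\gamma)$; the iterated inequality forces $km \leq d_k \leq kM$. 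The standard translation-number characterization of the FDTC from \cite{HondaKazezMatic,ItoKawamuroEssential} gives $N \cdot c(\phi, C) = c(\phi^N, C) = \lim_{k \to \infty} d_k/k$, and from $m \leq d_k/k \leq M$ we conclude $m/N \leq c(\phi, C) \leq M/N$.

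The main obstacle is this last step: matching the arc order $\leq$ with the translation-number coordinate $d_k$ in $\widetilde S$ and justifying the limit formula $c(\phi^N, C) = \lim d_k/k$. Once this dictionary is in place (which amounts to choosing consistent lifts and invoking the Honda--Kazez--Mati\'c/Ito--Kawamuro characterization of the FDTC), the sandwich estimate drops out immediately from the iterated inequality.
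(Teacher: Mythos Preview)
The paper does not supply a proof of this lemma at all: it is stated with the citation \cite{ItoKawamuroEssential} and used as a black box, so there is no in-paper argument to compare against.

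Your argument is the standard one and is essentially correct. The iteration step is clean: order-preservation under $\phi$ and $T_C$, together with $[\phi,T_C]=1$, gives $T^{km}(\gamma)\leq \phi^{kN}(\gamma)\leq T^{kM}(\gamma)$ for all $k\geq 1$, exactly as you wrote. The only soft spot, which you yourself flag, is the last paragraph: the quantity $d_k$ need not be an integer (the endpoint of the lift of $\phi^{kN}(\gamma)$ is not in general an integer translate of that of $\widetilde\gamma$), so you should phrase it as a real-valued coordinate on the circle at infinity (or on $\widetilde C$) rather than a ``translation index in $\mathbb Z$''. With that adjustment the sandwich $km\leq d_k\leq kM$ survives, and the identification $c(\phi^N,C)=\lim_{k\to\infty} d_k/k$ is exactly the translation-number description of the FDTC established in \cite{HondaKazezMatic} and \cite{ItoKawamuroEssential}. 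So the proposal is a valid proof once you invoke that description; it is just not something the present paper proves independently.
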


\begin{theorem}\cite{ItoKawamuroEssential}, \cite{HondaKazezMatic}
The FDTC of $\phi\in {\tt Aut}(S,\partial S)$ defines a homogenous quasi-morphism of defect $1$. That is for all $\phi, \eta \in {\tt Aut}(S,\partial S)$
$$ |c(\phi \circ \eta, C ) - c(\phi, C) - c(\eta, C)|\leq 1 $$
and 
$$ c(\phi^n, C) = n\cdot c(\phi, C).$$

\end{theorem}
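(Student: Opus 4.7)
The plan is to build an integer-valued ``pre-FDTC'' $m\colon{\tt Aut}(S,\partial S)\to\mathbb{Z}$ whose defect is at most $1$, and then homogenize it to recover the real-valued invariant $c(\cdot,C)$. Fix once and for all an essential arc $\gamma\subset S$ emanating from a point of $C$. The iterates $\{T_C^k(\gamma)\}_{k\in\mathbb{Z}}$ are totally ordered under $\leq$ because they spiral monotonically around $C$, so
\[
m(\phi):=\max\{k\in\mathbb{Z} \mid T_C^k(\gamma)\leq \phi(\gamma)\}
\]
is a well-defined integer satisfying $T_C^{m(\phi)}(\gamma)\leq \phi(\gamma)\leq T_C^{m(\phi)+1}(\gamma)$.

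The heart of the argument is to verify the estimate $m(\phi)+m(\eta)\leq m(\phi\eta)\leq m(\phi)+m(\eta)+1$, i.e.\ that $m$ is a quasi-morphism of defect~$1$. Two standard facts about any $\phi\in{\tt Aut}(S,\partial S)$ are needed: (a) $\phi$ commutes with $T_C$ in the mapping class group, because $\phi$ fixes $\partial S$ pointwise and so $\phi T_C\phi^{-1}$ is a Dehn twist along a curve isotopic rel $\partial S$ to the core of the collar of $C$; and (b) $\phi$ preserves the arc ordering $\leq$ at $C$ because it is orientation-preserving and fixes a neighborhood of the basepoint. Applying $\phi$ to $T^{m(\eta)}(\gamma)\leq\eta(\gamma)\leq T^{m(\eta)+1}(\gamma)$, invoking (a) to rewrite $\phi T^k=T^k\phi$, and then substituting $T^{m(\phi)}(\gamma)\leq\phi(\gamma)\leq T^{m(\phi)+1}(\gamma)$, yields $T^{m(\phi)+m(\eta)}(\gamma)\leq \phi\eta(\gamma)\leq T^{m(\phi)+m(\eta)+2}(\gamma)$, from which the two-sided bound on $m(\phi\eta)$ follows.

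Next, define $c(\phi,C):=\lim_{N\to\infty} m(\phi^N)/N$. The limit exists by Fekete's lemma applied to the almost-subadditive sequence $\{m(\phi^N)\}$ (the defect-$1$ estimate specializes to $m(\phi^{N+M})\leq m(\phi^N)+m(\phi^M)+1$), and by Lemma~\ref{Key} this limit coincides with the FDTC of Proposition~\ref{FDTC_properties}. Homogeneity $c(\phi^n,C)=n\cdot c(\phi,C)$ is then immediate from the substitution $N\mapsto nN$ in the defining limit. For the defect-$1$ quasi-morphism bound I would invoke the standard principle that homogenization preserves the defect of an integer-valued quasi-morphism: conjugation invariance of $c$ (Proposition~\ref{FDTC_properties}) lets one cyclically rearrange $(\phi\eta)^N=\phi(\eta\phi)^{N-1}\eta$ so that the net discrepancy between $m((\phi\eta)^N)$ and $m(\phi^N)+m(\eta^N)$ reduces, in the limit, to a single application of the defect-$1$ bound; dividing by $N$ and letting $N\to\infty$ then produces $|c(\phi\eta,C)-c(\phi,C)-c(\eta,C)|\leq 1$.

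The principal obstacle is this final homogenization step. A naive iteration of the defect-$1$ estimate for $m$ yields $|m((\phi\eta)^N)-m(\phi^N)-m(\eta^N)|=O(N)$, which after division by $N$ only gives \emph{some} constant defect, not necessarily $1$. Pinning the defect down to exactly $1$ requires the conjugation invariance of $c$, which in turn rests on fact~(a) that every mapping class fixing $\partial S$ commutes with $T_C$, together with the observation that homogeneous quasi-morphisms vanish on commutators. Care is also needed at the defect-$1$ step itself: the naive computation gives a two-sided spread of $2$ between $T^{m(\phi)+m(\eta)}$ and $T^{m(\phi)+m(\eta)+2}$, so one must observe that the maximum integer $k$ with $T^k(\gamma)\leq\phi\eta(\gamma)$ only slips by one, not two.
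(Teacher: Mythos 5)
The paper itself does not prove this statement---it is quoted from \cite{HondaKazezMatic} and \cite{ItoKawamuroEssential}---so your proposal has to stand on its own, and it has a genuine gap exactly where you locate it: the defect-$1$ bound for the homogenization. There is no ``standard principle that homogenization preserves the defect of an integer-valued quasi-morphism''; the standard fact is only that homogenization at most \emph{doubles} the defect ($D(\hat q)\leq 2D(q)$, and the factor $2$ is sharp in general), so from your defect-$1$ estimate for $m$ you can only conclude $|c(\phi\eta,C)-c(\phi,C)-c(\eta,C)|\leq 2$. Your proposed repair---conjugation invariance of $c$, vanishing of homogeneous quasi-morphisms on commutators, and the rearrangement $(\phi\eta)^N=\phi(\eta\phi)^{N-1}\eta$---is not an argument: cycling only re-proves $c(\phi\eta,C)=c(\eta\phi,C)$, which you already have, and it gives no control on the discrepancy $m((\phi\eta)^N)-m(\phi^N)-m(\eta^N)$, which a priori drifts linearly in $N$.

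What actually pins the defect to $1$ (this is how Malyutin \cite{Malyutin} and Ito--Kawamuro proceed, following the classical Poincar\'e translation-number argument) is structure you are not using: $\phi$ acts order-preservingly on the \emph{entire} totally ordered set of essential arcs based on $C$ (equivalently, on its order completion, a copy of $\mathbb R$ on which $T_C$ acts as the unit translation), not merely on the single orbit $\{T_C^k(\gamma)\}_{k\in\mathbb Z}$. For such an action the displacement $x\mapsto \tilde\phi(x)-x$ varies by strictly less than $1$ uniformly over all points, and it is this all-arcs (not single-basepoint) estimate that yields $c(\phi\eta,C)\leq c(\phi,C)+c(\eta,C)+1$ together with the matching lower bound. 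The rest of your outline is sound: the defect-$1$ property of $m$ holds (your ``slips by one, not two'' worry is resolved by the strict inequality $\phi(\gamma)<T_C^{m(\phi)+1}(\gamma)$ forced by maximality of $m(\phi)$), the identification of $\lim_N m(\phi^N)/N$ with the FDTC follows from Lemma~\ref{Key}, and homogeneity is immediate from that limit description; but the sharp defect is genuinely out of reach of the basepoint data alone. Either realize $c(\cdot,C)$ as a translation number of an action on this ordered completion and invoke the classical defect-$1$ fact for translation numbers, or cite the defect computation directly, as the paper does.
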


Since ${\tt MCG}(D_n) \cong B_n$ we can view a braid as a mapping class of the $n$-punctured disk $D_n$. 
Our goal is to find bounds on the fractional Dehn twist of a braid $\beta \in B_n$ using the $\LCF(\beta)$. Our surface is the $n$-punctured disk $D_n$ such that there is only one boundary component. As a result, we will denote the fractional Dehn twist of a braid $\beta$ by $c(\beta)$.  

\begin{lemma}\label{lemma:FDTC_CnF}
The FDTC satisfies the following: 
\begin{enumerate}
\item[$(1)$]
Let $\beta \in B_n$. If there exists an essential arc $\gamma$ such that $\beta(\gamma)$ and $\gamma$ are isotopic then $c(\beta)= 0$. 
\item[$(2)$]
$c(A)= 0$ for all $A\in \CFn \setminus \{\delta= \sigma_{n-1} \cdots \sigma_2 \sigma_1 \}$.
 \item[$(3)$]
$ c(\beta_1\beta_2)= c(\beta_2 \beta_1)$ for all $\beta_1, \beta_2 \in B_n.$
\item[$(4)$]
$c(\delta) = 1/n$.    
\item[$(5)$]
$c((\delt) (\aone))= 1/3$.
\end{enumerate}
\end{lemma}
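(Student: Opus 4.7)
The plan addresses the five parts sequentially. For (1), since $\beta$ fixes $\gamma$ up to isotopy, for each $N \geq 1$ we have $T^{-1}(\gamma) \leq \beta^N(\gamma) \leq T^1(\gamma)$, and Lemma~\ref{Key} gives $-1/N \leq c(\beta) \leq 1/N$; letting $N \to \infty$ yields $c(\beta) = 0$. For (3), the identity $\beta_1\beta_2 = \beta_1(\beta_2\beta_1)\beta_1^{-1}$ shows that $\beta_1\beta_2$ and $\beta_2\beta_1$ are conjugate, and the conjugation invariance of the FDTC stated in Proposition~\ref{FDTC_properties} gives the claim.

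For (2), my approach is to find an essential arc fixed by $A$ and then invoke (1). The mapping class $A$ is realized by positive half Dehn twists supported in a regular neighborhood of $\ch(A)$, so $A$ acts as the identity outside that neighborhood. I split into two cases. If some puncture $P_j$ lies outside $\ch(A)$, take an arc from $\partial D_n$ to $P_j$ disjoint from $\ch(A)$, which is fixed by $A$. Otherwise every puncture lies on the diagram of $A$; since $A \neq \delta$ the diagram cannot be a single $n$-gon, so it consists of at least two disjoint polygon components. In this case I pick a properly embedded arc in $D_n$ with both endpoints on $\partial D_n$ that separates two of these components, and $A$ fixes this arc up to isotopy. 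Either way (1) gives $c(A) = 0$. The main obstacle here is verifying that such an arc is essential (not boundary-parallel) in all subcases; the interesting case is when the diagram of $A$ uses every puncture, as with $A = (\aoneathree)$ in $B_4$.

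For (4), since $\delta^n = \Delta^2 = T_{\partial D_n}$ is the boundary Dehn twist, Proposition~\ref{FDTC_properties} gives $c(\delta^n) = 1$, and homogeneity yields $c(\delta) = 1/n$. For (5), the key algebraic identity is $(\delta a_1)^3 = \delta^4$ in $B_4$, obtained by repeatedly applying the commutation $a_i \delta = \delta\, \tau(a_i) = \delta a_{i+1}$ from Example~\ref{ex:tau}:
\[
(\delta a_1)^3 = \delta^2 a_2 a_1 \delta a_1 = \delta^2 a_2 \delta a_2 a_1 = \delta^3 a_3 a_2 a_1 = \delta^4.
\]
Homogeneity then gives $3\, c(\delta a_1) = c(\delta^4) = 4\, c(\delta) = 1$, hence $c((\delt)(\aone)) = c(\delta a_1) = 1/3$.
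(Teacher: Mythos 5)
Your proposal is correct and follows essentially the same route as the paper: parts (3)--(5) reproduce the paper's arguments exactly (conjugacy invariance, $\delta^n=\Delta^2=T_C$, and $(\delta a_1)^3=\delta^4$), and parts (1)--(2) likewise reduce to Lemma~\ref{Key} via an essential arc preserved by the braid. The only differences are cosmetic: the paper proves (1) with the single comparison $T^0(\gamma)\le\beta(\gamma)\le T^0(\gamma)$ rather than your $N\to\infty$ sandwich, and in (2) it merely asserts the existence of an invariant boundary-to-boundary essential arc, whereas you spell out a concrete construction from the diagram of $A$ (your case analysis, including the separating arc when all punctures lie on a multi-component diagram, is sound).
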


\begin{proof}
(1) Notice that if $\phi (\alpha)\cong \alpha $ then $\alpha =  T^{0}_C(\alpha) \leq \phi(\alpha)\leq T^{0}_C(\alpha)= \alpha$. By Lemma~\ref{Key} we see that 
$$ 0 \leq c(\phi) \leq 0$$
giving us the result.

(2) This follows as a direct application of part (1); for all $A\in \CFn\setminus\{\delta \}$, there exists an essential arc from the boundary to the boundary that is preserved by the homeomorphism $\beta\in \CFn$. 
In fact, $A$ is reducible type in Thurston's classification. 

(3) By the conjugacy invariant property (Proposition~\ref{FDTC_properties}), we see that for any $\beta_1,\beta_2\in B_n$
$$
c(\beta_1 \beta_2)= c(\beta_1^{-1} (\beta_1 \beta_2) \beta_1) = c(\beta_2 \beta_1). 
$$

(4) Notice that $\delta^n = (\sigma_{n-1}\sigma_n ... \sigma_1)^n = \Delta^2 = T_C$. It follows that 
$$ c( \delta ) = \frac{1}{n} c(T_C)=\frac{1}{n}.$$

(5) Using the property that $A(\delt) = (\delt) \tau(A)$, where $\tau(A)$ is the rotation of $A\in \CF$ by $\pi/2$ as discussed in Example ~\ref{ex:tau}, we have
\begin{eqnarray*}
c(((\delt) (\aone))^3) &=& c((\delt)(\aone)(\delt)(\aone)(\delt)(\aone)) \\
&=& c((\delt)^3 (\athree) (\atwo) (\aone)) \\
&=& c((\delt)^4) = 1.
\end{eqnarray*}
Therefore, 
$c((\delt) (\aone)) = 1/3.$
\end{proof}

Using Lemma~\ref{lemma:FDTC_CnF} (alongside a train track idea for computing FDTC) we can compute the FDTC of a braid $\beta$. As an instructional example, we will compute the FDTC of a product of two canonical factors $A,B \in \CF$.

\begin{proposition}
$c((\bone)(\btwo))= 1/2$
\end{proposition}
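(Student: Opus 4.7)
The plan is to compute $(b_1b_2)^2$ explicitly inside $B_4$ and show it equals the full Dehn twist $T_C$ times a reducible braid; the FDTC then follows from the properties of the quasi-morphism $c$ listed in Proposition~\ref{FDTC_properties} and part~(1) of Lemma~\ref{lemma:FDTC_CnF}.

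From the list (\ref{delta-expression}) of factorizations of $\delta$ we have $\delta = a_1 a_3 b_1 = a_2 a_4 b_2$. Inverting (and using that $a_1$ commutes with $a_3$ by the BKL relation $a_{ij}a_{kl}=a_{kl}a_{ij}$ for $i<j<k<l$) gives $b_1 = a_1^{-1} a_3^{-1} \delta$ and $b_2 = a_4^{-1} a_2^{-1} \delta$. I then multiply and push each $\delta$ to the right using $\tau(a_j) = \delta^{-1} a_j \delta = a_{j+1}$, equivalently $\delta a_j^{-1} = a_{j-1}^{-1} \delta$ (indices mod $4$). A short sliding calculation collapses this to $b_1 b_2 = a_1^{-2} a_3^{-2} \delta^{2}$. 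Squaring and using $\delta^{2} a_j^{-1} = a_{j-2}^{-1} \delta^{2}$ (so $\delta^{2}$ commutes with every word in $a_1$ and $a_3$, since $\tau^{-2}$ swaps $a_1 \leftrightarrow a_3$) yields
\[
(b_1 b_2)^{2} \;=\; a_1^{-4} a_3^{-4} \delta^{4} \;=\; T_C \cdot a_1^{-4} a_3^{-4},
\]
where $T_C = \delta^{4} = \Delta^{2}$ is the Dehn twist about $\partial D_4$.

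The factor $a_1^{-4} a_3^{-4}$ preserves the essential horizontal equatorial arc $\gamma \subset D_4$ that separates $\{P_1,P_2\}$ from $\{P_3,P_4\}$, because $a_1$ is supported in a neighborhood of $\overline{P_1 P_2}$ and $a_3$ in a neighborhood of $\overline{P_3 P_4}$, both disjoint from $\gamma$. Hence Lemma~\ref{lemma:FDTC_CnF}(1) gives $c(a_1^{-4} a_3^{-4}) = 0$, and Proposition~\ref{FDTC_properties} then yields $c((b_1 b_2)^{2}) = c(T_C \cdot a_1^{-4} a_3^{-4}) = 1 + 0 = 1$. Since $c(\phi^{2}) = 2\, c(\phi)$, we conclude $c(b_1 b_2) = \frac{1}{2}$.

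The main delicate point is the sliding calculation in step two: one must track the index conventions for $\tau$ modulo $4$ carefully and remember that among the four pairs of band generators appearing, only $\{a_1, a_3\}$ commutes by the BKL relations (the pair $\{a_2, a_4\}$ does \emph{not} commute, as the arcs $\overline{P_2 P_3}$ and $\overline{P_1 P_4}$ cross, and of course $b_1, b_2$ do not commute either). Once the identity $(b_1 b_2)^{2} = T_C \cdot a_1^{-4} a_3^{-4}$ is verified, the FDTC computation is immediate from the stated quasi-morphism and reducible-arc properties.
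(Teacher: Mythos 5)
Your proof is correct, and it takes a genuinely different route from the paper. The paper proves $c(b_1b_2)=1/2$ dynamically: it tracks the image of an arc $\alpha$ encircling one puncture under iterates of $\beta=b_1b_2$ via train tracks, gets the upper bound from $\beta^2(\alpha)\leq T_C(\alpha)$ and the lower bound from $T_C^{\,n+1}(\alpha)\leq \beta^{2n+3}(\alpha)$, and then passes to the limit using Lemma~\ref{Key}. You instead prove the exact algebraic identity $(b_1b_2)^2=\delta^4 a_1^{-4}a_3^{-4}=T_C\cdot a_1^{-4}a_3^{-4}$ (your sliding computation checks out: $b_1=a_1^{-1}a_3^{-1}\delta$, $b_2=a_4^{-1}a_2^{-1}\delta$ from the factorizations in (\ref{delta-expression}), $\delta a_j^{-1}=a_{j-1}^{-1}\delta$, and $\tau^{-2}$ swaps $a_1\leftrightarrow a_3$ so $\delta^2$ commutes with $a_1^{-2}a_3^{-2}$; one can also verify $b_1b_2=a_1^{-2}a_3^{-2}\delta^2$ directly in Artin generators), then apply Lemma~\ref{lemma:FDTC_CnF}(1) to the reducible braid $a_1^{-4}a_3^{-4}$, which preserves the essential diameter arc separating $\{P_1,P_2\}$ from $\{P_3,P_4\}$, and finish with the properties $c(\phi\circ T_C)=1+c(\phi)$ and $c(\phi^2)=2c(\phi)$ from Proposition~\ref{FDTC_properties}. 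Your approach buys an exact, limit-free computation resting only on already-stated properties, and it exposes the reducible structure of $(b_1b_2)^2$ explicitly; the paper's train-track argument is less tidy here but is deliberately ``instructional,'' since that technique extends to products of canonical factors (e.g.\ the other entries of Table 3) for which no such clean decomposition into $T_C$ times an FDTC-zero braid is available.
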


\begin{proof}
See Figure~\ref{fig:traintrack}. Consider the arc $\alpha \subset D_4$ in the 4-punctured disk that goes from the boundary back to the boundary while encapsulating one of the punctures (the bottom left puncture). Similar to the concept of train tracks, we will do a zip move isotoping the endpoints of the arc to one point $p\in \partial D^2$ as in the right of Figure~\ref{fig:traintrack2}.

\begin{figure}
\includegraphics[width= 10cm]{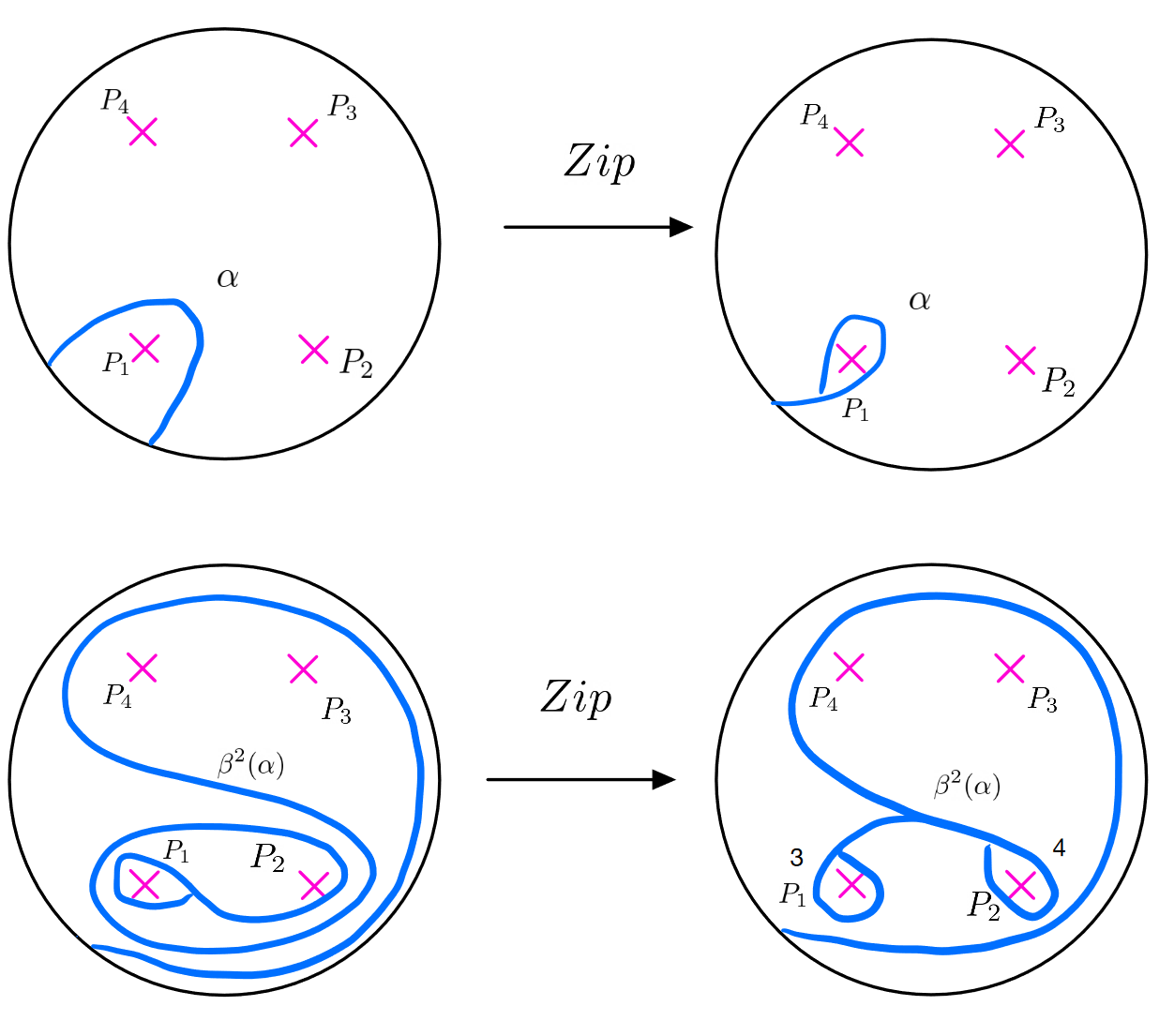}
\caption{This is the zip process the arcs $\alpha$ and $\beta^2(\alpha)$ into a train-track}
\label{fig:traintrack}
\end{figure}

After applying the braid $\beta = (\bone)(\btwo)$ twice, one sees that $\beta^2(\alpha)$ wraps around the bottom left puncture $P_1$ three times whereas it wraps around the bottom right puncture $P_2$ four times. Applying a zip move on the bottom face of the square made by the punctures, one gets a train-track of $\beta^2(\alpha)$ with labels $3$ and $4$ respectively. Further, note that $\beta^2(\alpha)\leq T_C(\alpha)$; it follows that $c(\beta)\leq 1/2$ by Lemma~\ref{Key}. 

After applying $(\bone)$, the resulting train track will be as in Figure~\ref{fig:traintrack2}. Notice that the resulting arc is to the right of $T_C(\alpha)$, the boundary Dehn twist on $\alpha$. 

\begin{figure}
\includegraphics[width=10cm]{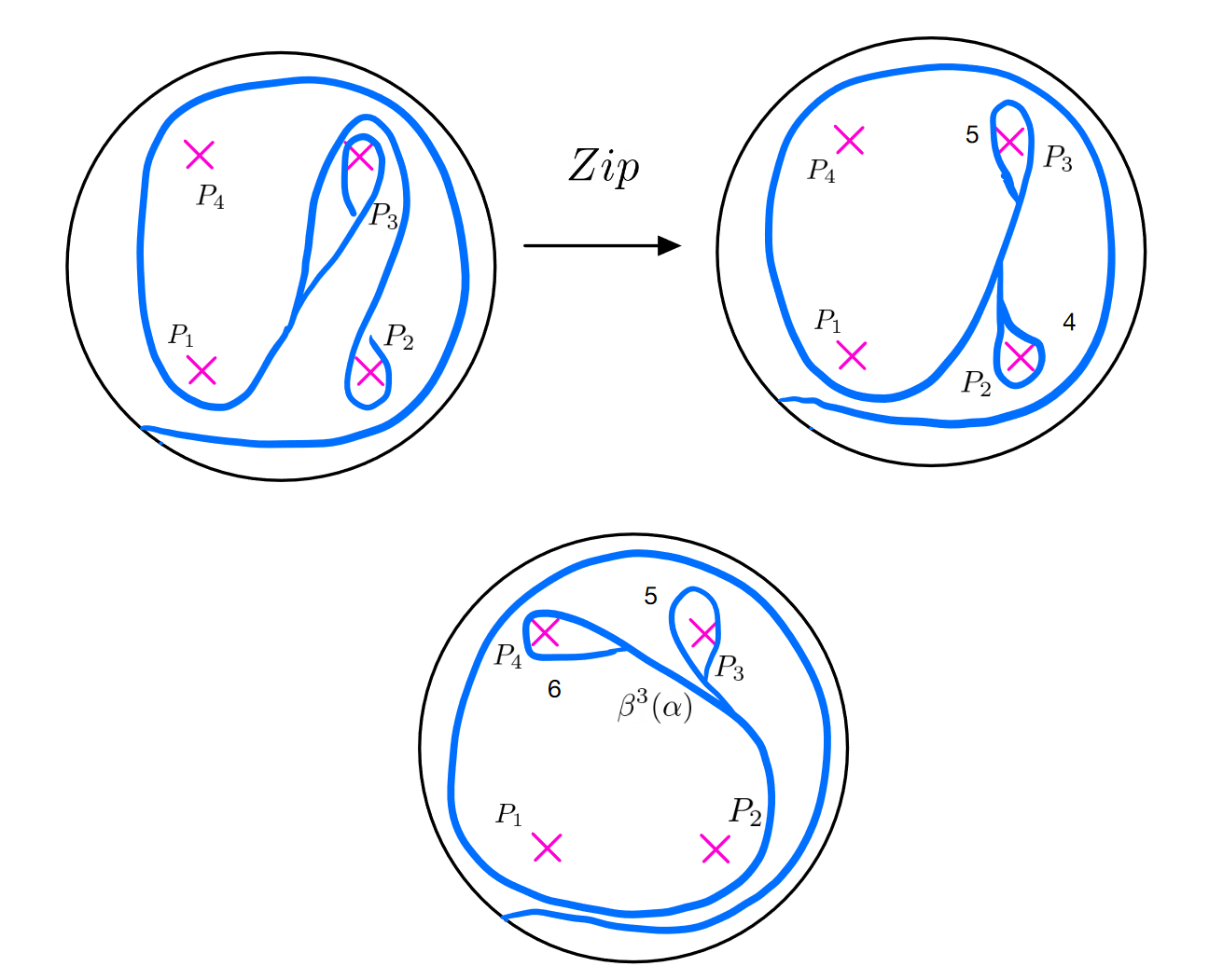}
\caption{On the top is the train track after applying the canonical factor $b_1$ to $\beta^2(\alpha)$; on the bottom is the train track for $\beta^3(\alpha)$.}
\label{fig:traintrack2}
\end{figure}

Similarly, applying $(\btwo)$ will increase the weights while passing over the second puncture after a first Dehn twist. It follows that $T_C(\alpha) \leq \beta^3(\alpha) $ and hence $1/3 \leq c(\beta)$. Using these train tracks, it is easy to see that for $n\in \mathbb N$
$$ T^{n+1}_C(\alpha)\leq \beta^{2n+ 3}(\alpha)$$
$$ \Rightarrow c((\bone)(\btwo))\geq \lim_{n\to \infty} \frac{n+1}{2n+3} = 1/2.$$
Combining the two inequalities, we get $c((\bone)(\btwo))= 1/2.$
\end{proof}

The following table is a list of the FDTC of the product $AB$ of canonical factors $A, B \in \CF$. 

\begin{table}[h!]
\begin{center}
\begin{tabular}{ | m{5em} | m{1cm}|| m{5em} | m{1cm} || m{5em} | m{1cm} | } 
  \hline
  Products & FDTC & Products & FDTC & Products & FDTC\\ 
  \hline
  (\atwoaone)(\atwo) & 0 & (\aone)(\atwo) & 0 & (\atwoaone)(\aone) & 0\\ 
  \hline
  (\aone)(\btwo) & 0 & (\bone)(\aone) & 0 & (\atwoaone)(\bone) & 0 \\ 
  \hline
  (\aoneathree)(\aone) & 0 & (\atwoaone)(\btwo) & 1/4 & (\aoneathree)(\btwo) & 1/4\\ 
  \hline
  (\bone)(\aoneathree) & 1/4 & (\aoneathree)(\afour) & 1/4& (\delt)(\aone) & 1/3\\ 
  \hline
  (\atwoaone)(\athree) & 1/3 & (\atwoaone)(\aoneathree) & 1/3 & (\aoneathree)(\aoneafour) & 1/3\\ 
  \hline
  (\delt)(\btwo) & 3/8 & (\atwoaone)(\athreeatwo) & 3/8 & (\bone)(\btwo) & 1/2\\ 
  \hline
  (\aoneathree)(\atwoafour) & 1/2 & (\delt)(\atwoafour) & 1/2 & (\delt)(\atwoaone) & 1/2\\ 
  \hline
\end{tabular}
\end{center}
\caption{The FDTC for $AB$ where $A, B \in \CF$.}
\end{table}

Suppose $\LCF(\beta) = \delta^r A_1 \cdots A_k$. Recall the invariants $\inf(\beta)=r$ and $\sup(\beta) = r+k$. 

\begin{proposition}\label{FDTC_bound}
For a braid $\beta$ we have 
$$\frac{\inf[\beta]}{n} \leq c(\beta) \leq \frac{\sup[\beta]}{n}.$$
\end{proposition}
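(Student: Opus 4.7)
My plan is to compare powers of $\beta$ with powers of $\delta$ at the level of essential arcs and then feed the comparison into Lemma~\ref{Key}. First, by the conjugacy invariance of $c$ (Proposition~\ref{FDTC_properties}) together with Proposition~\ref{prop:sss}, I may replace $\beta$ by any super summit element $\beta' \in \SSS(\beta)$ without changing $c(\beta)$; for such a representative, $\inf(\beta') = \inf[\beta]$ and $\sup(\beta') = \sup[\beta]$. Writing $\LCF(\beta') = \delta^r A_1 \cdots A_k$ with $r = \inf[\beta]$ and $r+k = \sup[\beta]$, it therefore suffices to prove
$$\frac{r}{n} \leq c(\beta') \leq \frac{r+k}{n}.$$

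The central step is to establish, for every $N \geq 1$ and every essential arc $\gamma \subset D_n$ based at $\partial D_n$, the two-sided bound
$$\delta^{rN}(\gamma) \leq (\beta')^N(\gamma) \leq \delta^{(r+k)N}(\gamma).$$
From $\LCF(\beta')$ one reads off $\beta' = \delta^r P$ with $P = A_1 \cdots A_k \in B_n^+$, and Corollary~\ref{cor:complementary} supplies $Q \in B_n^+$ with $\delta^{r+k} = \beta' Q$. Using the identity $X\delta = \delta \tau(X)$ together with the fact that the inner automorphism $\tau$ preserves $B_n^+$ (it only rotates the band generators), a short induction on $N$ produces $P_N \in B_n^+$ with $(\beta')^N = \delta^{rN} P_N$, which yields the lower bound. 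For the upper bound I would instead run a direct arc induction: assuming $(\beta')^N(\gamma) \leq \delta^{(r+k)N}(\gamma)$, applying $\beta'$ (which, being an orientation-preserving, boundary-fixing mapping class, respects the arc ordering $\leq$) and then applying the base case $\beta'(\eta) \leq \delta^{r+k}(\eta)$ with $\eta = \delta^{(r+k)N}(\gamma)$ propagates the estimate to $N+1$. Both inductions rest on the geometric fact that every element of $B_n^+$ acts monotonically on boundary-based arcs, i.e., $g(\gamma) \geq \gamma$ for all $g \in B_n^+$.

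With the arc bound in hand, specializing to $N = n$ and using $\delta^n = \Delta^2 = T_C$ gives $T_C^r(\gamma) \leq (\beta')^n(\gamma) \leq T_C^{r+k}(\gamma)$, at which point Lemma~\ref{Key} (applied with exponent $n$, lower bound $r$, upper bound $r+k$) immediately produces $r/n \leq c(\beta') \leq (r+k)/n$, which is the desired inequality.

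The main obstacle will be the geometric lemma that $B_n^+$ acts monotonically on arcs based at $\partial D_n$. By multiplicativity of the claim under composition in $B_n^+$, it is enough to verify it for a single positive band generator $a_{i,j}$ viewed as a counterclockwise half-twist along the chord $\overline{P_iP_j}$; this is a direct picture argument in the universal cover of $D_n$, but it is the one place where the topology of the punctured disk (as opposed to pure Garside combinatorics) genuinely enters. Once that monotonicity is established, the remainder of the argument is pure bookkeeping with the automorphism $\tau$ and the central element $\delta^n = T_C$.
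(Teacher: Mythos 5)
Your proposal is correct and takes essentially the same route as the paper: the paper likewise deduces $c(\delta^r) \leq c(\delta^r A_1\cdots A_k) \leq c(\delta^{r+k})$ from Lemma~\ref{Key} together with $c(\delta)=1/n$ (equivalently $\delta^n=T_C$) and the positivity of the canonical factors, then concludes by conjugacy invariance. The only difference is one of explicitness: you spell out the arc comparison $\delta^{rN}(\gamma)\leq (\beta')^N(\gamma)\leq \delta^{(r+k)N}(\gamma)$ via the right-veering monotonicity of positive bands and pass to a super summit element at the start, whereas the paper leaves this comparison implicit in its appeal to Lemma~\ref{Key} and invokes the conjugacy invariance of $c(\beta)$, $\inf[\beta]$, $\sup[\beta]$ at the end.
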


\begin{proof}
Notice that $c(\delta)= 1/n$ by Lemma~\ref{lemma:FDTC_CnF}; it then follows by Lemma~\ref{Key} that
$$\frac{\inf(\beta)}{n} =\frac{r}{n} = c(\delta^r) 
\leq c(\delta^r A_1\cdots A_k) \leq c(\delta^{r+k})= \frac{r+k}{n}= \frac{\sup(\beta)}{n}.$$ 
Since $c(\beta), \inf[\beta], \text{ and }\sup[\beta]$ are conjugacy invariants (see Def~\ref{def:inf[beta]}) the statement follows. 
\end{proof}

It is proved by Malyutin \cite{Malyutin}  for braids that the Dehornoy floor  $\lfloor \beta \rfloor_{D}$ and the FDTC are numerically close to each other; 
$\lfloor \beta \rfloor_{D} \leq c(\beta) \leq \lfloor \beta \rfloor_{D}+1$. 
Since $\sup(\beta)\leq ||\beta||$, the minimal word length in band generators, 
our bound improves current bounds in the literature such as the following bound by Ito.

\begin{proposition}\cite{Itobound}.
If an $n$-braid $\beta$ is conjugate to a braid $\beta'$ represented by a product of $m$ band generators then
$$\lfloor \beta \rfloor_{D} < \frac{m+1}{n}.$$
\end{proposition}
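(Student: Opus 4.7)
The plan is to derive Ito's bound by chaining together three ingredients already present in the paper: Malyutin's inequality, the conjugacy invariance of the FDTC, and Proposition~\ref{FDTC_bound}. Writing $\beta'$ as a product of $m$ (positive) band generators, the target inequality would follow from the chain
\begin{equation*}
\lfloor \beta \rfloor_{D} \;\leq\; c(\beta) \;=\; c(\beta') \;\leq\; \frac{\sup[\beta']}{n} \;\leq\; \frac{\sup(\beta')}{n} \;\leq\; \frac{m}{n} \;<\; \frac{m+1}{n}.
\end{equation*}
Here the first inequality is Malyutin's theorem quoted immediately before the proposition, the equality is the conjugacy invariance of the FDTC from Proposition~\ref{FDTC_properties}, the third inequality is Proposition~\ref{FDTC_bound}, the fourth is immediate from $\sup[\beta'] = \min_{\gamma \sim \beta'} \sup(\gamma) \leq \sup(\beta')$, and the last is trivial.

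The only step requiring an argument is the penultimate inequality $\sup(\beta') \leq m$. To prove it, I would write $\LCF(\beta') = \delta^{r} A_{1}\cdots A_{k}$; since $\beta' \in B_{n}^{+}$ is represented by a positive word, Theorem~\ref{Thm:SQ} gives $r \geq 0$. Next I compute the exponent sum (writhe) of $\beta'$ in two ways. Directly from the word $g_{1}\cdots g_{m}$ it equals $m$; from the LCF it equals $r(n-1) + \sum_{i=1}^{k} \|A_{i}\|$, using that $\delta$ is a length-$(n-1)$ positive word and each $A_{i}$ is a positive word of length $\|A_{i}\|$. Every $A_{i} \in \CFn \setminus \{e, \delta\}$ satisfies $\|A_{i}\| \geq 1$, hence $r(n-1) + k \leq m$. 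Since $n \geq 2$ and $r \geq 0$ we have $r \leq r(n-1)$, so
\begin{equation*}
\sup(\beta') = r + k \leq r(n-1) + k \leq m,
\end{equation*}
which completes the proof.

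The only mild subtlety, and the single place the argument might need to be rephrased, is the convention on the word ``band generators.'' If inverses are allowed, the exponent sum identity above must be replaced by subadditivity of $\sup$, namely $\sup(\alpha\gamma) \leq \sup(\alpha) + \sup(\gamma)$, together with $\sup(a_{i,j}) = 1$ for positive bands and $\sup(a_{i,j}^{-1}) \leq 0$ (the latter because the identity relation $e = a_{i,j}^{-1}\cdot a_{i,j}$ exhibits $a_{i,j}^{-1} \leq e = \delta^{0}$). In either case the resulting bound $\sup(\beta') \leq m$ is the same, and the rest of the chain goes through unchanged; the bulk of the work was already done in establishing Proposition~\ref{FDTC_bound}.
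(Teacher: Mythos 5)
Your chain of inequalities is correct, and it is worth noting that the paper itself offers no proof of this statement: the proposition is quoted from Ito's work \cite{Itobound} purely for comparison, with the remark that the FDTC bound of Proposition~\ref{FDTC_bound} ``improves'' it. What you have done is make that remark precise: from Malyutin's inequality $\lfloor \beta \rfloor_{D} \leq c(\beta)$, conjugacy invariance of the FDTC, Proposition~\ref{FDTC_bound}, and the key estimate $\sup(\beta') \leq m$, you obtain the sharper conclusion $\lfloor \beta \rfloor_{D} \leq m/n$, which trivially implies Ito's strict bound. Your proof of $\sup(\beta') \leq m$ is sound: writing $\LCF(\beta') = \delta^{r}A_{1}\cdots A_{k}$, Theorem~\ref{Thm:SQ} gives $r \geq 0$ since $\beta'$ is a positive band word, and comparing exponent sums yields $m = r(n-1) + \sum_{i}\|A_{i}\| \geq r(n-1)+k \geq r+k = \sup(\beta')$. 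This is a genuinely different route from Ito's original argument, which works directly with the Dehornoy ordering and Dehornoy floor rather than through the FDTC; your derivation buys a marginally stronger bound but is internal to this paper's machinery, whereas Ito's is independent of the FDTC bound. The only caveat is the one you flag yourself: under the signed-word reading of ``band generators'' you invoke subadditivity of $\sup$, which is standard Garside theory but is nowhere established in this paper; under the paper's convention that band generators are the positive bands $a_{i,j}$, your exponent-sum argument needs no such input and the proof is complete as written.
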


\nocite{*}
\bibliographystyle{amsplain}
\bibliography{bib.bib}

\providecommand{\bysame}{\leavevmode\hbox to3em{\hrulefill}\thinspace}
\providecommand{\MR}{\relax\ifhmode\unskip\space\fi MR }
\providecommand{\MRhref}[2]{%
  \href{http://www.ams.org/mathscinet-getitem?mr=#1}{#2}
}
\providecommand{\href}[2]{#2}
\begin{thebibliography}{10}

\bibitem{Artin}
E.~Artin, \emph{Theorie der zopfe}, Hamburg Abh. \textbf{4} (1925), 47--72.

\bibitem{Bennequin}
D.~Bennequin, \emph{Entrelacements et equations de \textit{Pfaff}}, Asterisque
  \textbf{107/108} (1983).

\bibitem{Birman}
J.~Birman, K.~H. Ko, and S.~J. Lee, \emph{A new approach to the word and
  conjugacy problems in the braid groups}, Adv. Math \textbf{139} (1998),
  322--353.

\bibitem{CalvezWiest}
M.~Calvez and B.~Wiest, \emph{A fast solution to the conjugacy problem in the
  four-strand braid group}, J. Group Theory \textbf{17} (2014), no.~5,
  757--780.

\bibitem{OrderingBraids}
P.~Dehornoy, I.~Dynnikov, D.~Rolfsen, and B.~Wiest, \emph{Ordering braids},
  vol. 148, AMS, Providence, 2008.

\bibitem{DynnikovPrasolov}
I.~A. Dynnikov and M.~V. Prasolov, \emph{Bypasses for rectangular diagrams: A
  proof of the jones conjecture and related questions}, Trans. Moscow Math.
  Soc. \textbf{74} (2013), 97--144.

\bibitem{ElrifaiMorton}
E.~A. Elrifai and H.~R. Morton, \emph{Algorithms for positive braids}, Quart.
  J. Math. Oxford \textbf{45} (1994), no.~2, 479--497.

\bibitem{FranksWilliams}
J.~Franks and R.F. Williams, \emph{Braids and the jones polynomial}, Trans.
  Amer. Math. Soc. \textbf{303} (1987), 97--108.

\bibitem{Garside}
F.~A. Garside, \emph{The braid group and other groups}, Quart. J. Math. Oxford
  \textbf{20} (1969), no.~78, 235--254.

\bibitem{Keiko}
J.~Hammer, T.~Ito, and K.~Kawamuro, \emph{Positivities of knots and links and
  the defect of bennequin inequality}, Exp. Math. (2022), no.~1, 199--255.

\bibitem{Itobound}
T.~Ito, \emph{Braid ordering and the geometry of closed braids}, Geometry and
  Topology \textbf{15} (2011), 473--498.

\bibitem{ItoKawamuroEssential}
T.~Ito and K.~Kawamuro, \emph{Essential open book foliations and fractional
  dehn twist coefficient}, Geom. Dedicata \textbf{187} (2017), 17--67.

\bibitem{KeikoIto}
\bysame, \emph{The defect of the bennequin-eliashberg inequality and bennequin
  surfaces}, Indiana University Mathematics Journal \textbf{68} (2019), no.~3,
  799--833.

\bibitem{HondaKazezMatic}
W.~Kazez K.~Honda and G.~Matic, \emph{Right-veering diffeomorphisms of compact
  surfaces with boundary}, Inventiones Mathematicae \textbf{169} (2007),
  427--449.

\bibitem{KangKoLee}
E.~S. Kang, K.~H. Ko, and S.~J. Lee, \emph{Band-generator presentation for the
  4-braid group}, Topology Appl \textbf{78} (1997), 39--60.

\bibitem{Keiko06}
K.~Kawamuro, \emph{The algebraic crossing number and the braid index of knots
  and links}, Algebr. Geom. Topol. \textbf{6} (2006), 2313--2350.

\bibitem{KoLee}
K.~H. Ko and S.~J. Lee, \emph{Genera of some closed 4-braids}, Topology Appl
  \textbf{78} (1997), 61--77.

\bibitem{LaFountainMenasco}
D.~LaFountain and W.~Menasco, \emph{Embedded annuli and jones' conjecture},
  Algebr. Geom. Topol. \textbf{14} (2014), 3589--3601.

\bibitem{Malyutin}
A.V. Malyutin, \emph{Writhe of (closed) braids (russian, with russian
  summary)}, Algebra i Analiz \textbf{16} (2004), no.~5, 59--91, translation in
  St. Petersburg Math. J. {\bf 16}(2005), no.5, 791-813.

\bibitem{Morton}
H.~R. Morton, \emph{Seifert circles and knot polynomials}, Math. Proc.
  Cambridge Philos. Soc. \textbf{99} (1986), 107--109.

\bibitem{OsvathSzabo}
P.~Osv\'{a}th and Z.~Szab\'{o}, \emph{Knot floer homology and the four-ball
  genus}, Geom. Topol. \textbf{7} (2003), 615--639.

\bibitem{Rasmussen}
J.A. Rasmussen, \emph{Khovanov homology and the slice genus}, Invent. Math.
  \textbf{182(2)} (2010), 419--447.

\bibitem{Rolfsen}
D.~Rolfsen, \emph{Knots and links}, AMS Chelsea Publishing, Providence, 2003.

\bibitem{Thurston}
D.~B. A.~Epstein \text{(with Cannon, Holt, Levey, Patterson, and Thurston)},
  \emph{Word processing in groups}, Jones and Barlett, Boston, MA, 1992.

\bibitem{Xu}
P.~J. Xu, \emph{The genus of closed 3-braids}, J. Knot Theory Ramifications
  \textbf{1} (1992), no.~3, 303--326.

\end{thebibliography}
\end{document}